\colorlet{darkishRed}{red!80!black}
\colorlet{darkishBlue}{blue!60!black}
\colorlet{darkishGreen}{green!60!black}
\renewcommand{\PrintDOI}[1]{\doi{#1}}
\let\setminus=\smallsetminus
\let\setminus=\smallsetminus
\renewcommand{\leq}{\leqslant}
\renewcommand{\geq}{\geqslant}
\renewcommand{\ge}{\geq}
\renewcommand{\le}{\leq}
\let\rho=\varrho
\let\phi=\varphi
\newcommand{ \N } { \mathbb{N} }
\def\calCommandfactory#1{%
  \expandafter\def\csname c#1\endcsname{\mathcal{#1}}}
\def\frakCommandfactory#1{%
  \expandafter\def\csname frak#1\endcsname{\mathfrak{#1}}}
\newcounter{ctr}
  \edef\X{\@Alph\c@ctr}
  \edef\Y{\@alph\c@ctr}
\renewcommand{\cC}{\mathscr{C}}
\newtheorem{theorem}{Theorem}[section] 
\newtheorem{proposition}[theorem]{Proposition}
\newtheorem{corollary}[theorem]{Corollary}
\newtheorem{lemma}[theorem]{Lemma}
\newtheorem{conjecture}[theorem]{Conjecture}
\newtheorem{mainresult}{Theorem} 
\crefname{mainresult}{Theorem}{Theorems}
\newenvironment{customthm}[1]
  {\innercustomthm}
  {\endinnercustomthm}
\theoremstyle{definition}
\newtheorem{example}[theorem]{Example}
\newtheorem{definition}[theorem]{Definition}
\newtheorem{question}[theorem]{Question}
\theoremstyle{remark}
\newtheorem{claim}{Claim}
\crefname{claim}{Claim}{Claims}
\newenvironment{claimproof}{\noindent\textit{Proof.}}{\strut\hfill\ensuremath{\blacksquare}\medskip}
\newcommand{\radialWidth}{radial width}
\newcommand*{\radialHWidth}[1]{radial~{#1}-width}
\DeclareMathOperator{\rw}{radw}
\DeclareMathOperator{\rHw}{radw_{\cH}}
\newcommand{\radwDec}[1]{\rw(#1)}
\DeclareMathOperator{\rs}{rads}
\DeclareMathOperator{\rad}{rad}
\newcommand{\dist}{d}
\newcommand{\CH}{\ensuremath{\mathcal H}}
\newcommand{\CV}{\ensuremath{\mathcal V}}
\newcommand{\HdecV}{\ensuremath{(H,\CV)}}
\newcommand{\subdivk}[2]{\ensuremath{T_{#2}#1}}
\newcommand*{\ball}[3]{\ensuremath{B_{#1}(#2,#3)}}
\newcommand{\gd}{graph-decom\-posi\-tion}
\newcommand{\tds}{tree-decom\-posi\-tions}
\newcommand{\COMMENT}[1]{}
\newcommand{\arXivOrNot}[2]{\ifbool{arXiv}{{#1}}{{#2}}}
\tikzset{vtx/.style={fill, circle, inner sep = 1.2pt}}
\tikzset{branch/.style={fill, rectangle, inner sep = 2pt}}
\title[A structural duality for path-decompositions into parts of small radius]{A structural duality for path-decompositions\\ into parts of small radius}
\author[S.\ Albrechtsen \and R.\ Diestel \and A.\ Elm \and E.\ Fluck \and R.\ Jacobs \and P.\ Knappe \and P.\ Wollan]{Sandra Albrechtsen \and Reinhard Diestel \and Ann-Kathrin Elm \and Eva Fluck \and Raphael W. Jacobs \and Paul Knappe \and Paul Wollan}
\address{University of Hamburg, Department of Mathematics, Bundesstraße 55 (Geomatikum), 20146 Hamburg, Germany}
\email{sandra@albrechtsen-mail.de, \{raphael.jacobs, paul.knappe\}@uni-hamburg.de}
\address{Universität Heidelberg, Department of Computer Science, Im Neuenheimer Feld 205, 69120 Heidelberg, Germany}
\email{ann-kathrin.elm@web.de}
\address{RWTH Aachen University, Department of Computer Science, Ahornstraße 55, 52074 Aachen, Germany}
\email{fluck@informatik.rwth-aachen.de}
\address{University of Rome, “La Sapienza”, Department of Computer Science, Via Salaria 113, 00198 Rome, Italy}
\email{wollan@di.uniroma1.it}
\date{}
\keywords{Path-Decompositions, Graph-Decompositions, Radial Width, Quasi-Isometry, Geodesic, Topological Minors, Fat minors}
\subjclass[2020]{05C10 (Primary) 05C75, 05C12, 05C62, 05C83 (Secondary)}
\begin{document}

\begin{abstract}
    It is an easy observation that if a graph~$G$ admits a path-decomposition whose parts have small radius, then $G$ contains no large subdivision of $K_{1,3}$ or $K^3$ as a (quasi-)geodesic subgraph.
    We show that these are in fact the only obstructions to such path-decompositions of small radial width, and we prove analogous results for decompositions modelled on cycles and subdivided stars instead of paths.
    
    With our results we confirm in a strong form a conjecture of Georgakopoulos and Papasoglu on fat-minor-characterisations of graphs quasi-isometric to paths, cycles and paths, and subdivided stars, respectively.
    For this, we present a novel view on quasi-isometries between graphs by graph-decompositions of bounded radial width and spread.   
    This new perspective enables us to prove further results in coarse graph theory, and may thus be of independent interest.
\end{abstract}

\maketitle

\section{Introduction\texorpdfstring{\protect\footnote{This paper was first published on arXiv on 17.07.2023. This version has a completely rewritten Introduction, which takes into account several developments occurring close or parallel to the first publication; after the Introduction, we made only restructurings, and added a few propositions and some small corrections and amendments.}}{}}

\subsection{Fat minors and quasi-isometries}

Following Gromov's ideas on coarse geometry~\cite{GromovInfiniteGroups} into the realm of graphs, Georgakopoulos and Papasoglu~\cite{georgakopoulos2023graph} suggested a study of graphs from a coarse or metric perspective, which revolves around the concept of \emph{quasi-isometry}.
Roughly speaking, two metric spaces are quasi-isometric if their large-scale geometry coincides, and more formally, a quasi-isometry is a generalisation of bi-Lipschitz maps that allows for an additive error.
For example, all locally finite Cayley graphs of a finitely generated group are quasi-isometric to each other (see e.g.~\cite{loh2017geometric}*{Proposition 5.2.5}).

As their favorite problem of metric graph-theoretic flavour, Georgakopoulos and Papasoglu proposed the following conjecture, whose qualitative converse is immediate:
\begin{conjecture}[\cite{georgakopoulos2023graph}*{Conjecture 1.1}] \label{conj:GeorgakopoulosPapasoglu}
     Let~$\cX$ be a finite set of finite graphs. Then there exists a function $f : \N \rightarrow \N \times \N$ such that, for all integers $K$, every graph with no $K$-fat $X$ minor for any $X \in \cX$ is $f(K)$-quasi-isometric to a graph with no $X$ minor for any $X \in \cX$.
\end{conjecture}
\noindent
Here, `$K$-fat minors' are a metric variant of minors:
Roughly speaking, a $K$-fat $X$ minor is an $X$ minor with additional distance constraints: its branch sets and branch paths are pairwise at least~$K$ apart, except for incident vertex-edge pairs (see~\cref{subsec:GraphDecQuasiGeoTopMinors} for the formal definition).

Georgakopoulos and Papasoglu~\cite{georgakopoulos2023graph} verified their conjecture for~$\cX = \{K^3\}$ (which describes graphs that are quasi-isometric to a forest), as well as for~$\cX = \{K_{1,m}\}$.
An earlier result of Chepoi, Dragan, Newman, Rabinovich and Vaxes~\cite{FatK23Minor} yields the case~$\cX = \{K_{2,3}\}$ (quasi-isometric to an outerplanar graph).
Fujiwara and Papasoglu~\cite{coarsecacti} solved the~$\cX = \{K_4^-\}$-case (quasi-isometric to a cactus); see also~\cite{FatK4} for a simpler proof. Moreover, Albrechtsen, Jacobs, Knappe and Wollan~\cite{FatK4} solved the case $\cX = \{K^4\}$.
In contrast to these positive results, Davies, Hickingbotham, Illingworth and McCarty~\cite{CounterexAgelosPanosConjecture} showed that~\cref{conj:GeorgakopoulosPapasoglu} is false in general.

In this paper, we establish~\cref{conj:GeorgakopoulosPapasoglu} for three further small cases but in a stronger form:
\begin{mainresult} \label{main:TheoremIntro}
    \cref{conj:GeorgakopoulosPapasoglu} is true for~$\cX = \{K^3, K_{1,3}\}$ (quasi-isometric to a disjoint union of paths), $\cX = \{K_{1,3}\}$ (cycles and paths)\footnote{Georgakopoulos and Papasoglu prove the more general case~$\cX = \{K_{1,m}\}$ in a second version of their paper~\cite{georgakopoulos2023graph}, which appeared after the initial publication of the present paper.} and~$\cX = \{K^3, W\}$ (subdivided stars), where~$W$ is the graph depicted in~\cref{fig:wrench}.
    In all these cases, \cref{conj:GeorgakopoulosPapasoglu} even holds true when replacing `fat minors' by `quasi-geodesic topological minors'.
\end{mainresult}
\noindent We refer the reader to \cref{thm:path-width-intro,thm:star-width-intro,thm:cycle-width-intro} for the respective functions $f$.

In~\cref{main:TheoremIntro}, `quasi-geodesic topological minors' are a metric version of topological minors, whose model in~$G$ is `quasi-geodesically' embedded (see~\cref{subsec:GraphDecQuasiGeoTopMinors} for the definition).
Roughly speaking, the metric of the topological minor agrees (up to a multiplicative constant) with the metric induced by the graph $G$.
In particular, quasi-geodesic topological minors yield fat minors but in general not the other way around (\cref{lem:QuasiGeodTopMinorAreFatMinors}).
\medskip

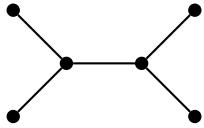
\begin{figure}
	\begin{tikzpicture}[thick, every node/.style={draw, circle, fill=black, inner sep=1.5pt}]
		
		\path (-1.85,0.1) node (A) {};
		\path (-1.5,-0.5) node (B) {};
		\path (-1,0) node (C) {};
		\path (1,1) node (D) {};
		\path (1.5,1.5) node (E) {};
		\path (1.85,0.9) node (F) {};
		
		\draw (A) -- (C) -- (D) -- (E) (B) -- (C) (D) -- (F);
		
	\end{tikzpicture}
	\caption{The {\em wrench\/} graph $W$.} \label{fig:wrench}
\end{figure}

Our work on~\cref{main:TheoremIntro} was independent of~\cref{conj:GeorgakopoulosPapasoglu} and the involved concepts such as fat minors and quasi-isometries.
In fact, we only discovered the connections of our results to quasi-isometries and fat minors, and in particular to~\cref{conj:GeorgakopoulosPapasoglu}, through~\cite{georgakopoulos2023graph}.
Our approach is not based on quasi-isometries, but on the notion of graph-decompositions, which we describe in what follows.

\subsection{Graph-decompositions} \label{subsec:Intro:GraphDecs}

Graph-decompositions~\cite{GraphDec} are a natural extension of \tds\ which allow the bags $V_h$ of decompositions $(H, \cV)$ to be arranged along general decomposition graphs $H$ instead of just trees.
We then say that $(H, \cV)$ is \emph{modelled on $H$} and call it an \emph{$H$-decomposition}.
Moreover, given any graph class $\cH$, we refer to $H$-decompositions with $H \in \cH$ as \emph{$\cH$-decompositions}.
Recent applications of graph-decompositions include a local-global decomposition theorem~\cite{GraphDec} as well as the study of local separations~\cite{LocalSeps} and of locally chordal graphs~\cite{LocallyChordal}.

In this paper, we present a width-notion for graph-decompositions such that a graph~$G$ has an $H$-decomposition of small width if and only if $H$ resembles the large-scale structure of~$G$.
The naive approach defines the `width' of a graph-decomposition analogously to tree-width, that is, as the minimal cardinality of a bag of the decomposition (minus $1$).
However, the respective~`$\cH$-width', the minimal width of an $\cH$-decomposition for a given graph class~$\cH$, does not yield a meaningful extension of tree-width: if a minor-closed class $\cH$ of graphs has bounded tree-width, then every graph of small $\cH$-width has small tree-width itself, and if $\cH$ has unbounded tree-width, then the $\cH$-width of every graph is at most $2$ (minus $1$) \cites{GMhierarchies,GMV}.\footnote{By Diestel and Kühn~\cite{GMhierarchies}*{Proposition~3.7}, every graph $G$ has a grid-decomposition of width at most $2$ (minus~$1$). Thus, the $\cH$-width of $G$ is at most $2$ (minus $1$) for any graph class $\cH$ of unbounded tree-width by Robertson and Seymour's Grid Theorem~\cite{GMV}.}

This inspired us to consider a metric perspective instead: To define the `width' of a graph-decomposition, we measure the size of its bags not in terms of their cardinality, but by the radius of its \emph{parts}, the induced subgraphs on the bags of the decomposition.
More formally, recall that the \emph{radius} of a graph~$G$ is the smallest~$r \in \N$ such that some vertex of~$G$ has distance at most~$r$ to all vertices of~$G$.
We then let the \emph{(inner-)\radialWidth} of a decomposition be the largest radius among its parts and define the~\emph{\radialHWidth{$\cH$}} of~$G$ for a given class~$\cH$ of graphs to be the smallest {\radialWidth} among all decompositions of~$G$ modelled on graphs in~$\cH$.

This notion of {\radialWidth} is not new for tree-decompositions.
Indeed, the \radialHWidth{$\cH$} for the class~$\cH$ of all trees, or \emph{\radialHWidth{tree}} for short, has been studied before, e.g. as the equivalent \emph{tree-length} in~\cite{dourisboure2007treelength} and \emph{tree-breadth} in~\cite{dragan2014approximation}.
Similar to the classical tree-width notion~\cite{GMII}, several computationally hard problems such as the computation of the metric dimension of a graph~\cite{belmonte2017metric} can be efficiently solved on graphs of small \radialHWidth{tree} (for a summary, see~\cite{dourisboure2007treelength}*{Section~1} or~\cite{leitert2017treebreadth}).

\subsection{Interplay between graph-decompositions and quasi-isometries}

As it turns out, \gd s and quasi-isometries are closely related. In fact, these notions become qualitatively equivalent if we restrict to `honest' graph-decompositions of bounded radial width and bounded `radial spread'.

An $H$-decomposition of~$G$ is~\emph{honest} if all its bags are non-empty and for every edge of its decomposition graph~$H$, the bags corresponding to its endnodes intersect.
Informally speaking, being honest ensures that all connectivity in the decomposition graph~$H$ also appears in the decomposed graph~$G$.
For each vertex~$v \in G$ let~$H_v$ be the induced subgraph of~$H$ on the set of all nodes~$h \in H$ whose corresponding bags contain~$v$.
The \emph{(inner-)radial spread} of the $H$-decomposition is then defined as the largest radius of the~$H_v$ with~$v \in V(G)$.

The equivalence between the existence of an honest~$H$-decomposition with bounded radial width and spread and of a bounded quasi-isometry to~$H$ was observed for the case of trees~$H$ by Berger and Seymour~\cite{bergerseymourboundeddiamTD}*{4.1}.
Here, we extend their observation to arbitrary graphs~$H$.

\begin{proposition} \label{prop:QuasiIsoIntro}
    There exist functions $g: \N^2 \to \N^2$ and $h_1, h_2: \N^2 \to \N$ such that the following holds for all graphs $G,H$:
    \begin{enumerate}
        \item \label{itm:QuasiIsoIntro:GD-Iso} If~$G$ admits an honest~$H$-decomposition of radial width~$r_0$ and radial spread~$r_1$, then $G$ is~$g(r_0,r_1)$-quasi-isometric to~$H$.
        \item \label{itm:QuasiIsoIntro:Iso-GD} If~$G$ is~$(L,C)$-quasi-isometric to~$H$, then~$G$ admits an honest~$H$-decomposition of radial width~$h_1(L,C)$ and radial spread~$h_2(L,C)$.
    \end{enumerate}

\end{proposition}
\noindent
We refer the reader to~\cref{sec:QuasiIso} for the detailed statement on the functions~$g$ and~$h_1, h_2$.

The correspondence given by \cref{prop:QuasiIsoIntro} paves the way for a new proof method towards~\cref{conj:GeorgakopoulosPapasoglu}:
one through the graph-theoretic construction of a suitable graph-decomposition.
We follow this approach in the present paper and further demonstrate its power and versatility in~\cite{FatK4}.

\cref{prop:QuasiIsoIntro} in particular implies that we reached our initial goal from the beginning of~\cref{subsec:Intro:GraphDecs} with the notions of `radial width' and `radial spread':
a graph $H$ resembles the large-scale geometry of~$G$ (in terms of bounded quasi-isometries) if and only if $G$ admits an honest $H$-decomposition of bounded radial width and spread.

\subsection{Our results} \label{subsec:GraphDecQuasiGeoTopMinors}
With a suitable width measure at hand, our next goal was to identify obstructions to small radial $\cH$-width and to characterise which graphs have small radial $\cH$-width for given classes $\cH$ of graphs. Similar to \cref{conj:GeorgakopoulosPapasoglu}, we considered metric versions of minors as candidates for suitable obstructions. For the graph classes~$\cH$ that we study in this paper, our obstructions are `quasi-geodesic topological minors', which can be seen as a special case of fat minors. Let us make this precise in what follows.

A \emph{$(\geq k)$-subdivision} of a graph~$X$ is a graph~$X'$ which arises from~$X$ by subdividing each edge at least $k$ times, i.e.\ replacing every edge with a path of at least~$k+1$ edges.
Further, a subgraph~$X$ of a graph~$G$ is \emph{$c$-quasi-geodesic}\footnote{Note that in metric spaces this property is often called $(c,0)$-quasi-geodesic. In~\cite{BendeleRautenbach22} it is called $c$-multiplicative.} for some~$c \in \N$ if the distance of any two vertices~$x$ and~$y$ in~$X$ is at most~$c$ times their distance in~$G$.
Here, the parameter~$c$ describes how well the distances in~$X$ approximate the distances in~$G$; in particular, a subgraph is geodesic if and only if it is~$1$-quasi-geodesic.

Quasi-geodesic topological minors are indeed an obstruction to small radial width: 
if a $(\geq k)$-subdivision of a graph $X$ is a $c$-quasi-geodesic subgraph of $G$, then $X$ is a $\lfloor \frac{k-2}{2c} \rfloor$-fat minor of $G$ (\cref{lem:QuasiGeodTopMinorAreFatMinors}).
Hence, as fat minors form an obstruction to small radial width (\cref{lem:FatMinorObstructRadialWidth}), so do quasi-geodesic topological minors (\cref{lem:quasigeodesictopologicalminorsareobstructions}).

Our main result, \cref{main:TheoremIntro}, asserts that these are in fact the only obstructions to small radial $\cH$-width for the three cases where~$\cH$ consists of paths, cycles and paths, and subdivided stars, respectively.
We give the detailed statements in the following~\cref{thm:path-width-intro,thm:cycle-width-intro,thm:star-width-intro}, from which we then deduce~\cref{main:TheoremIntro}.

\begin{mainresult}[Radial path-width]\label{thm:path-width-intro}
	Let~$k \in \N$.
    If a connected graph~$G$ contains no~$(\geq k)$-subdivision of~$K^3$ as a geodesic subgraph and no~$(\geq 3k)$-subdivision of~$K_{1,3}$ as a~$3$-quasi-geodesic subgraph, then $G$ admits an honest decomposition modelled on a path $P$ of radial width at most~$18k + 2$ and radial spread at most~$18k+1$.

    Moreover, $P$ is $(1, 18k+2)$-quasi-isometric to $G$.
\end{mainresult}

\begin{mainresult}[Radial cycle-width]\label{thm:cycle-width-intro}
	Let~$k \in \N$.
    If a connected graph~$G$ contains no~$(\geq 3k)$-subdivision of~$K_{1,3}$ as a~$3$-quasi-geodesic subgraph, then $G$ admits an honest decomposition modelled on a cycle or path~$C$ of radial width at most~$18k+2$ and radial spread at most~$36k+2$.

    Moreover, $C$ is $(1, 18k+2)$-quasi-isometric to $G$. 
\end{mainresult}

\begin{mainresult}[Radial star-width]\label{thm:star-width-intro}
	Let~$k \in \N$.
    If a connected graph~$G$ contains no~$(\geq k)$-subdivision of~$K^3$ as a geodesic subgraph and no~$(\geq 3k)$-subdivision of the wrench graph~$W$ as a~$3$-quasi-geodesic subgraph, then $G$ admits an honest decomposition modelled on a subdivided star $S$ of radial width at most~$58k + 9$ and radial spread at most $30k+7$. 

    Moreover, there exists some $C_k \in \N$ such that some subdivided star is $(1, C_k)$-quasi-isometric to~$G$.\footnote{While we will only formally prove that some subdivided star $S'$ and some $C_k \in \N$ exists, one can check by carefully reading the proof that we may choose $S' = S$ and $C_k = 60k+14$ (see the paragraph after the proof of \cref{thm:star-width-intro} in \cref{sec:StarWidth} for details).} 
\end{mainresult}

\begin{proof}[Proof of \cref{main:TheoremIntro} given \cref{thm:path-width-intro,thm:cycle-width-intro,thm:star-width-intro}]
    Let $K \in \N$, and let $G$ be a graph with no $K$-fat $X$ minor for any $X$ in the respective $\cX$.
    By \cref{lem:QuasiGeodTopMinorAreFatMinors} there is an integer $k$ depending on $K$ and $\cX$ only, such that,
    for every $c \in \N$, $G$ contains no $(\geq c \cdot k)$-subdivision of any $X \in \cX$ as $c$-quasi-geodesic subgraph.
    Now we deduce \cref{main:TheoremIntro} from \cref{thm:path-width-intro,thm:cycle-width-intro,thm:star-width-intro} by applying the respective theorem to the components of $G$ and combine the obtained $(1,C_k)$-quasi-isometries to one from the disjoint union $H$ of their domains to $G$.
    \cref{lem:InverseQI} yields the desired $f(K)$-quasi-isometry from $G$ to $H$.
\end{proof}

\subsection{Sketch of the proofs}\label{sec:sketchproofs}

Let us give a brief overview of the proof techniques for~\cref{thm:path-width-intro,thm:star-width-intro,thm:cycle-width-intro}.
For the proof of~\cref{thm:path-width-intro} (radial path-width), we start with a longest geodesic path~$P$ in the connected graph~$G$.
We then show that either balls of small radius around~$V(P)$ cover all of~$G$ or we can find a geodesic \hbox{$(\geq k)$-subdivision} of the triangle~$K^3$ or a~$3$-quasi-geodesic~$(\geq 3k)$-subdivision of the claw~$K_{1,3}$.
In the former case we construct a~$P$-decomposition of~$G$ by letting the bag corresponding to a node~$p \in P$ be the union of all those small radius-balls, whose centre vertex has small distance to~$p$ in~$P$.

The proof technique for \cref{thm:path-width-intro} immediately generalises to \cref{thm:cycle-width-intro} (radial cycle-width).
The proof of \cref{thm:star-width-intro} (radial star-width) is more involved. 
As before we start with a longest geodesic path~$P$ in~$G$ and consider the subgraph~$G'$ of~$G$ which is covered by balls of small radius around~$V(P)$.
Unlike in the path-case, $G'$ will in general not be equal to~$G$.
But if $G$ does not contain a large subdivision of~$K^3$ or~$W$ as a~($3$-quasi-)geodesic subgraph, then every component of $G-G'$ will have its neighbours in $G'$ only close to the same vertex $p_s$ of $P$.
We then identify a geodesic path within each component of~$G - G'$ such that all vertices in the component have small distance to this path.
All these paths can then be combined with~$P$ into a subdivided star~$S$, by adding edges between their last vertices and $p$. 
We then assign to each node $s$ of $S$ a suitable bag $V_s$ of vertices of bounded distance to $s$.

\subsection{An open conjecture about quasi-isometries to trees}

For the special case of~$\cX = \{K^3\}$, Georgakopoulos and Papasoglu answered~\cref{conj:GeorgakopoulosPapasoglu} in the affirmative, proving that the absence of a~$K$-fat~$K^3$ minor implies the existence of an~$f(K)$-quasi-isometry to a forest.
Berger and Seymour~\cite{bergerseymourboundeddiamTD} characterised the graphs quasi-isometric to a forest using a similar kind of obstruction. 
Our results~\cref{thm:path-width-intro,thm:cycle-width-intro,thm:star-width-intro} yield another natural guess for such an obstruction -- long quasi-geodesic cycles:

\begin{conjecture}\label{conj:tree-width-intro}
	There is a function~$f:\mathbb{N} \rightarrow \mathbb{N} \times \mathbb{N}$ such that if a connected graph~$G$ does not contain a~$c$-quasi-geodesic cycle of length at least $3ck$ for some~$c, k \in \N$, then~$G$ is $f(k)$-quasi-isometric to a tree.
\end{conjecture}
\noindent If true, this statement would strengthen the respective results of Georgakopoulos and Papasoglu and of Berger and Seymour.
Note that by~\cref{prop:QuasiIsoIntro} and \cite{bergerseymourboundeddiamTD}*{4.1} we can equivalently ask about the existence of a function~$f$ which guarantees that $G$ admits an honest tree-decomposition of radial width at most~$f(k)$.
\arXivOrNot{In \cref{app:Outlook} we show that the proof technique described in \cref{sec:sketchproofs} for \cref{thm:path-width-intro,thm:cycle-width-intro} does not work for \cref{conj:tree-width-intro}.}{}

\arXivOrNot{In \cref{app:CterexQuestionRadWidth}, we show that the strengthening of~\cref{conj:GeorgakopoulosPapasoglu} for quasi-geodesic topological minors analogous to \cref{conj:tree-width-intro} fails  earlier than~\cref{conj:GeorgakopoulosPapasoglu} itself.}{}

\subsection{Structure of the paper}

\cref{sec:Preliminaries} collects some basic definitions.
In \cref{sec:RadialWidth} we introduce the notion of graph-decompositions and their radial width and spread, and show how they are related to quasi-isometries, proving \cref{prop:QuasiIsoIntro}. We also prove in \cref{subsec:QuasiGeoTopMinors} that quasi-geodesic topological minors yield fat minors and are an obstruction to small radial width.
The following three~\cref{sec:PathWidth,sec:CycleWidth,sec:StarWidth} contain the proofs of \cref{thm:path-width-intro,thm:cycle-width-intro,thm:star-width-intro}, respectively.
\arXivOrNot{We finally discuss further problems resulting from our work in~\cref{app:Outlook,app:CterexQuestionRadWidth}.}{In the appendix of the arXiv version we discuss further problems resulting from our work. In particular, we show that the proof technique described in \cref{sec:sketchproofs} for \cref{thm:path-width-intro,thm:cycle-width-intro} does not work for \cref{conj:tree-width-intro}, and we show that the strengthening of~\cref{conj:GeorgakopoulosPapasoglu} for quasi-geodesic topological minors fails  earlier than~\cref{conj:GeorgakopoulosPapasoglu} itself.}

\section{Preliminaries} \label{sec:Preliminaries}

We mainly follow the notations from~\cite{DiestelBook}. We recall that for two sets $X,Y$ of vertices of a graph $G$ an \emph{$X$--$Y$ path} in $G$ is a path in $G$ whose only vertex in $X$ is its first vertex and whose only vertex in $Y$ is its last vertex. Moreover, for a subgraph $H$ of $G$ an \emph{$H$-path} in $G$ is a non-trivial path in $G$ which meets $H$ precisely in its endvertices. For the remainder of the section, we briefly present the definitions we will require going forward.
All graphs in this paper are finite.

Let $G$ be a graph.
We write~$\dist_G(v, u)$ for the distance of the two vertices~$v$ and~$u$ in~$G$. 
For two sets~$U$ and~$U'$ of vertices of~$G$, we write~$\dist_G(U, U')$ for the minimum distance of two elements of~$U$ and~$U'$, respectively.
If one of~$U$ or~$U'$ is just a singleton, then we may omit the braces.
Further, if~$X$ is a subgraph of~$G$, then we abbreviate~$d_G(U, V(X))$ as~$d_G(U, X)$ for notational  simplicity.

Given a set~$X$ of vertices of~$G$, the \emph{$r$-ball (in~$G$) around~$X$}, denoted as~$B_G(X, r)$, is the set of all vertices in~$G$ of distance at most~$r$ from~$X$ in~$G$.
If~$X = \{v\}$ for some~$v \in V(G)$, then we omit the braces, writing~$B_G(v, r)$ for the~$r$-ball (in $G$) around~$v$.

Further, the \emph{radius}~$\rad(G)$ of~$G$ is the smallest number~$k \in \N$ such that there exists some vertex~$v \in G$ with~$\dist_G(v, w) \le k$ for every vertex~$w \in G$.
Note that~$G$ has radius at most~$k$ if and only if there is some vertex~$v \in G$ with~$V(G) = B_G(v, k)$.
If~$G$ is not connected, then we define its radius to be~$\infty$.
Additionally, if $U \subseteq V(G)$, then the \emph{radius of $U$ in $G$} is the smallest number $k \in \N$ such that there exists some vertex $v \in G$ with $U \subseteq B_G(v, k)$.

\section{Graph-decompositions and quasi-isometries} \label{sec:RadialWidth}

In this section we first introduce the concept of graph-decompositions and their radial width and spread and collect some properties of these notions.
We then study the connection between graph-decompositions of small radial width and spread and quasi-isometries to their decomposition graph, proving \cref{prop:QuasiIsoIntro}. Finally, we define quasi-geodesic topological minors and show that they yield fat minors, which indeed form obstruction to small radial width.

\subsection{Graph-decompositions}\label{subsec:GDs}

Let us recall the notion of graph-decompositions:
\begin{definition}[Graph-decomposition \cite{GraphDec}] \label{def:graph_decomp}
	Let~$G$ and~$H$ be graphs and let~$\cV=(V_h)_{h\in H}$ be a family of sets~$V_h$ of vertices of~$G$.\footnote{In~\cite{GraphDec} a graph-decomposition is, more generally, defined as a pair~$(H, (G_h)_{h \in H})$ of a graph~$H$ and a family of (not necessarily induced) subgraphs~$G_h$ of~$G$ indexed by the nodes of~$H$, instead of a family of vertex sets~$V_h$ as in this paper.}
	We call \HdecV\ an \emph{$H$-decomposition} of~$G$, a \emph{decomposition of~$G$ modelled on~$H$}, or just a \emph{graph-decomposition}, if
	\begin{enumerate}[label=(H\arabic*)]
		\item $\bigcup_{h\in H} G[V_h] = G$, and \label{ax:GraphDec1}
		\item for every vertex~$v \in G$, the graph of~$H_v := H[\{h\in H\mid v\in V_h\}]$ is connected. \label{ax:GraphDec2}
	\end{enumerate}
	The sets~$V_h$ are called the \emph{bags} of this graph-decomposition, their induced subgraphs~$G[V_h]$ are its \emph{parts}, and the graph~$H$ is its~\emph{decomposition graph}.
    Whenever a graph-decomposition is introduced as~$(H, \cV)$, we tacitly assume~$\cV = (V_h)_{h \in H}$.
    For a class~$\cH$ of graphs, we call \HdecV\ an \emph{$\cH$-decomposition} if it is an $H$-decomposition with~$H \in \cH$. \end{definition}

A graph-decomposition $(H,(V_h)_{h \in H})$ of a graph $G$ is \emph{honest} if all its bags $V_h$ are non-empty and $V_h \cap V_{h'} \neq \emptyset$ for every edge $hh' \in H$.

If $G$ is connected and every bag of $(H, \cV)$ is non-empty, then it follows from~\cref{def:graph_decomp} that~$H$ has to be connected as well. 
More generally, we have the following lemma:

\begin{lemma} \label{lem:GraphDecConnSubgraph}
    Let~$(H, \cV)$ be a graph decomposition of a graph~$G$.
    Then for any connected subgraph~$X$ of~$G$, the graph~$H_X := H[\{h \in H \mid X \cap V_h \neq \emptyset\}]$ is connected. 
\end{lemma}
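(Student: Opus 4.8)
The statement generalises axiom \ref{ax:GraphDec2}, which is exactly the case $X = G[\{v\}]$ a single vertex. The plan is to reduce to that axiom by a connectedness argument on $X$. First I would fix a connected subgraph $X$ of $G$ and set $H_X := H[\{h \in H \mid X \cap V_h \neq \emptyset\}]$; I must show $H_X$ is connected (and, implicitly, non-empty whenever $X$ is non-empty, which follows from \ref{ax:GraphDec1} since the parts $G[V_h]$ cover $G$, so every vertex of $X$ lies in some bag). For each vertex $v \in V(X)$, axiom \ref{ax:GraphDec2} tells us that $H_v := H[\{h \mid v \in V_h\}]$ is connected and non-empty, and clearly $H_v$ is a subgraph of $H_X$ since $v \in V_h$ implies $X \cap V_h \neq \emptyset$. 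Moreover $V(H_X) = \bigcup_{v \in V(X)} V(H_v)$: the inclusion ``$\supseteq$'' is immediate, and ``$\subseteq$'' holds because if $X \cap V_h \neq \emptyset$ then $V_h$ contains some vertex of $X$.

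The key step is then to show that these connected pieces $H_v$ glue together along the edges of $X$. I would argue that whenever $uv \in E(X)$, the subgraphs $H_u$ and $H_v$ of $H_X$ have a common vertex, hence $H_u \cup H_v$ is connected. Indeed, since $X$ is a subgraph of $G$, the edge $uv$ lies in $G$, so by \ref{ax:GraphDec1} there is some $h \in H$ with $uv \in E(G[V_h])$, which forces $\{u,v\} \subseteq V_h$; thus $h \in V(H_u) \cap V(H_v)$. Now I would finish by a standard connectivity argument: take any two vertices $g, g' \in V(H_X)$; they lie in $H_{v_0}$ and $H_{v_m}$ respectively for some $v_0, v_m \in V(X)$, and since $X$ is connected there is a $v_0$–$v_m$ path $v_0 v_1 \cdots v_m$ in $X$. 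Then $H_{v_0} \cup H_{v_1} \cup \cdots \cup H_{v_m}$ is connected, being a union of connected subgraphs in which consecutive ones share a vertex, and it contains both $g$ and $g'$. Hence $H_X$ is connected.

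I do not expect a genuine obstacle here; the only point requiring a little care is the bookkeeping that $V(H_X)$ is exactly covered by the $V(H_v)$ for $v \in V(X)$, so that every vertex of $H_X$ is ``reachable'' by this argument — this uses \ref{ax:GraphDec1} once more to ensure that a bag meeting $X$ actually contains a vertex of $X$. If one wanted to avoid even mentioning \ref{ax:GraphDec1}, one could instead observe directly that a bag $V_h$ with $X \cap V_h \neq \emptyset$ contains some $v \in V(X)$ with $v \in V_h$, so $h \in V(H_v)$, which is really the same observation. The argument is robust to whether or not $X$ or $H$ is connected as a standalone graph, and it is exactly the tool needed later to transport connectivity of subgraphs of $G$ through a graph-decomposition.
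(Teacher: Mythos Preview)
Your proof is correct and follows essentially the same approach as the paper: both use \ref{ax:GraphDec1} to show that adjacent vertices $u,v$ of $X$ have intersecting (connected, by \ref{ax:GraphDec2}) subgraphs $H_u, H_v$, and then conclude that $H_X = \bigcup_{x \in X} H_x$ is connected because $X$ is. Your write-up is in fact slightly more careful than the paper's in spelling out why $V(H_X) = \bigcup_{v \in V(X)} V(H_v)$.
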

\begin{proof}
    \cref{ax:GraphDec1} yields for every edge $vw$ in $H$ a part $G[V_h]$ that contains $vw$.
    This implies that both~$H_v$ and~$H_w$ contain $h$ and hence intersect. 
    Since $H_v$ and $H_w$ are connected by \cref{ax:GraphDec2}, it follows that $H_v \cup H_w$ is connected as well.
    This implies that as $X$ is connected, $H_X = \bigcup_{x \in X} H_x$ is connected as well. 
\end{proof}

The next lemma shows that we can obtain a new graph-decomposition by enlarging all bags of a given graph-decomposition simultaneously in that we replace each of its bags by the~$r$-ball around it for some globally fixed~$r \in \N$.
A version of~\cref{lem:BallsAroundGraphDec} for tree-decompositions was first proven in \cite{GraphDecArxiv}*{Lemma A.1}.

\begin{lemma} \label{lem:BallsAroundGraphDec}
    Let~$(H, \cV)$ be a graph-decomposition of a graph~$G$, and let~$r \in \N$.
    For every vertex~$h \in H$, we let~$V'_h := B_G(V_h, r)$.
    Then~$(H, \cV')$ is again a graph-decomposition of~$G$.
\end{lemma}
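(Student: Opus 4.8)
The plan is to verify the two defining properties \cref{ax:GraphDec1} and \cref{ax:GraphDec2} of a graph-decomposition for the pair $(H, \cV')$. Property \cref{ax:GraphDec1} is immediate: since $V_h \subseteq B_G(V_h, r) = V'_h$ for every~$h \in H$, we have $G[V_h] \subseteq G[V'_h]$, and hence $\bigcup_{h \in H} G[V'_h] \supseteq \bigcup_{h \in H} G[V_h] = G$, while the reverse inclusion holds because each~$V'_h$ is a set of vertices of~$G$.

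The content of the lemma lies in verifying \cref{ax:GraphDec2}, that is, in showing that $H'_v := H[\{h \in H \mid v \in V'_h\}]$ is connected for every~$v \in V(G)$. The key observation is that the $r$-ball $B_G(v, r)$ induces a connected subgraph $X := G[B_G(v, r)]$ of~$G$: any $w \in B_G(v, r)$ is joined to~$v$ by a shortest path in~$G$, and every vertex on this path is itself at distance at most~$r$ from~$v$, so the whole path lies in~$X$. I would then apply \cref{lem:GraphDecConnSubgraph} to the connected subgraph~$X$ of~$G$ to conclude that $H_X = H[\{h \in H \mid X \cap V_h \neq \emptyset\}]$ is connected.

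It then remains to identify~$H_X$ with~$H'_v$, which is a direct unravelling of the definitions: for $h \in H$ we have $X \cap V_h \neq \emptyset$ if and only if $B_G(v, r) \cap V_h \neq \emptyset$, which happens exactly when $\dist_G(v, V_h) \le r$, i.e.\ when $v \in B_G(V_h, r) = V'_h$. Hence $\{h \in H \mid X \cap V_h \neq \emptyset\} = \{h \in H \mid v \in V'_h\}$, so $H_X = H'_v$ is connected, as required. There is no serious obstacle here; the only point worth flagging is the reduction of \cref{ax:GraphDec2} to \cref{lem:GraphDecConnSubgraph}, which hinges on the elementary fact that $r$-balls induce connected subgraphs.
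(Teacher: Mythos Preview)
Your proof is correct and in fact slightly cleaner than the paper's. Both arguments verify \cref{ax:GraphDec2} via \cref{lem:GraphDecConnSubgraph} and the observation that shortest paths from~$v$ stay inside the ball $B_G(v,r)$. The difference is in packaging: the paper fixes an arbitrary $h \in H'_v$, chooses a shortest $v$--$V_h$ path~$P$, and uses \cref{lem:GraphDecConnSubgraph} on~$P$ to connect~$h$ to the (already connected) subgraph~$H_v$ inside~$H'_v$; you instead apply \cref{lem:GraphDecConnSubgraph} once to the entire ball $X = G[B_G(v,r)]$ and then observe the clean identity $H_X = H'_v$. Your route avoids the intermediate ``connect each $h$ to $H_v$'' step and is the more direct of the two, at the cost of nothing.
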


\begin{proof}
    For notational simplicity, we adopt the following conventions for the course of this proof.
    Given a subgraph~$X$ of~$G$, we write~$H_X := H[\{h \in H\mid V_h \cap V(X) \neq \emptyset\}]$ and~$H'_X := H[\{h \in H \mid V'_h \cap V(X) \neq \emptyset\}]$.
    
    By definition,~$(H, \cV')$ satisfies~\cref{ax:GraphDec1}.
    For~\cref{ax:GraphDec2} consider any vertex~$v \in G$.
    Then~$H_v$ is connected by~\cref{ax:GraphDec2} and non-empty by~\cref{ax:GraphDec1}.
    So in order to prove that~$H'_v$ is connected as well, it suffices to show that for every~$h \in V(H'_v)$, there is an~$h$--$V(H_v)$~path in~$H'_v$.

    By the definition of~$V'_h$, there exists~$w \in V_h$ with~$\dist_G(v, w) \le r$, and we fix a shortest~$v$--$w$~path~$P$ in~$G$.
    Then every vertex~$p \in P$ satisfies~$\dist_G(v, p) \leq r$ as witnessed by~$P$.
    In particular, every node~$h'  \in H$ with~$V_{h'} \cap V(P) \neq \emptyset$ satisfies~$v \in V_{h'}' = B_G(V_{h'}, r)$, and hence~$H_P$ is a subgraph of~$H'_v$.
    But now~$h \in V(H_P)$ as~$P$ meets the vertex~$w \in V_h$, and~$H_v$ is a subgraph of~$H_P$ as~$v \in V(P)$.
    So since~$H_P$ is connected by~\cref{lem:GraphDecConnSubgraph}, there exists a~$h$--$V(H_v)$~path in~$H_P$ and hence in~$H'_v$, as desired.   
\end{proof}

\subsection{Radial width and radial spread}

While the usual width of a tree-decomposition is measured in terms of the cardinality of its bags, the radial width of a graph-decomposition is measured in terms of the radius of its parts as follows.

\begin{definition}[Radial width] \label{def:radWidth}
	Let $\HdecV$ be a graph-decomposition of a graph~$G$ modelled on a graph~$H$.
	The \emph{(inner-)\radialWidth} of $\HdecV$ is 
	\begin{equation*}
		\rw(H, \cV) \coloneqq \max_{h\in V(H)} \rad(G[V_h]).
	\end{equation*}
    Note that if~$G[V_h]$ is disconnected for some~$h \in H$, then $\rw(H, \cV) = \infty$.
    The \emph{outer-\radialWidth} of $\HdecV$ is $\max_{h\in V(H)} \rad_G(V_h)$.
    We remark that the outer-radial width is at most the (inner-)radial width.
	
	Given a non-empty class $\cH$ of graphs, the \emph{(inner-)\radialHWidth{$\CH$}} of~$G$ is
	\begin{equation*}
		\rHw(G) \coloneqq \min\, \{\radwDec{H, \cV} \mid \HdecV \text{ is a graph-decomposition of }G \text{ with } H \in \CH \}.
	\end{equation*}
    Note that the (inner-)\radialHWidth{$\cH$} will always be at most~$\rad(G)$ for a connected graph~$G$ if $\cH$ contains at least one non-empty graph. The outer-radial $\cH$-width is defined analogously.
\end{definition}

For classes~$\cH$ of graphs that we frequently use in this paper, we name the~\radialHWidth{$\cH$} as in \cref{tab:WidthParameters}.
\begin{table}[ht!]
    \begin{center}
        \caption{Nomenclature for frequently used classes~$\cH$ of graphs. The second column gives the list of graphs which defines the minor-closure of~$\cH$ via forbidden (topological) minors\arXivOrNot{ (cf. \cref{app:CterexQuestionRadWidth})}{}.}
        \label{tab:WidthParameters}
        \begin{tabular}{c | c | c | c}
            \textbf{\shortstack{$\cH$ consists of\\ disjoint unions of}} & \textbf{\shortstack{Forbidden\\ (topological) minors}} & \textbf{$\cH$-decomposition} & \textbf{\radialHWidth{$\cH$}}\\
            \hline
            paths & $K^3$, $K_{1,3}$ & path-decomposition & \radialHWidth{path} \\
            subdivided stars & $K^3$, $W$ & star-decomposition & \radialHWidth{star} \\
            trees & $K^3$ & tree-decomposition & \radialHWidth{tree} \\
            cycles & $K_{1,3}$ & cycle-decomposition & \radialHWidth{cycle}
        \end{tabular}
    \end{center}
\end{table}

Under the name `tree breadth', the concept `outer-radial tree-width' has previously been studied~\cites{dragan2014approximation,leitert2017treebreadth} with applications to tree-spanners and routing problems.

\begin{definition}[Radial spread]
    Let $(H, \cV)$ be a \gd\ of a graph~$G$ modelled on a graph~$H$. 
    The \emph{(inner-)radial spread} of $(H,\cV)$ is 
    \[
    \rs(H, \cV) := \max_{v \in V(G)} \rad(H_v).
    \]
    The \emph{outer-radial spread} of $(H,\cV)$ is 
    $\max_{v \in V(G)} \rad_H(H_v)$.
    Note that the outer-radial spread is at most the (inner-)radial spread. 
\end{definition}

We remark that the `outer' versions of radial width and spread are only used in \cref{lem:TreeBreadthEquiv,lem:GraphDecToQuasiIso,lem:QuasiIsoToGraphDec}. Hence, we will usually omit the `inner' when talking about (inner)-radial width and spread.
\medskip

Berger and Seymour \cite{bergerseymourboundeddiamTD}*{1.5} proved that the (inner-)radial tree-width of a graph is at most two times its outer-radial width, and thus these width measures are qualitatively equivalent. The next lemma generalises this to arbitrary decomposition graphs.

\begin{lemma} \label{lem:TreeBreadthEquiv}
    Let~$(H, \cV)$ be a \gd\ of a graph~$G$ of outer-radial width $k \in \N$ and \mbox{(inner-)} radial spread $r \in \N$.
        Then setting $V_h' := B_G(V_h, k)$ for every node~$h \in H$ yields a \gd\ $(H, \cV')$ of $G$ of (inner-)radial width at most $2k$ and (inner-)radial spread at most $2kr$. 
\end{lemma}
\begin{proof}
    By~\cref{lem:BallsAroundGraphDec},~$(H, \cV')$ is indeed a graph-decomposition of~$H$.
    To show that~$(H, \cV')$ has (inner-)\radialWidth\ at most~$2k$, consider any node~$h \in H$.
    Since $(H,\cV)$ has outer-radial width at most $k$, there is a vertex $z_h$ with $V_h \subseteq B_G(z_h, k)$.
    Then~$z_h \in V_h'$ and~$G[V_h']$ contains a shortest path in~$G$ between every vertex~$x \in V_h$ and~$z_h$.
    Moreover, there exists for any vertex~$x' \in V_h'$ a vertex~$x \in V_h$ with~$\dist_{G[V_h']}(x', x) = \dist_G(x', x) \le k$.
    Thus, we have~$\dist_{G[V_h']}(z_h, x) \le \dist_{G[V_h']}(z_h, x') + \dist_{G[V_h']}(x', x) \le k + k = 2k$.
    This implies that each~$G[V_h']$ has radius at most~$2k$, and hence~$(H, \cV')$ has (inner-)\radialWidth\ at most~$2k$.

    To see that $(H, \cV')$ has (inner-)radial spread at most $2kr$, consider any vertex $v$ of~$G$ and any node~$h$ of~$H$ such that $v \in V'_h$. By the definition of~$V'_h$, there exists a vertex $u \in V_h$ such that $d_G(v,u) \leq k$. Let $P = p_0 \dots p_n$ be a shortest $v$--$u$ path in $G$, in particular, $||P|| = n \leq k$. By~\cref{lem:GraphDecConnSubgraph}, the subgraph $H_P := H[\{h \in H \mid V(P) \cap V_h \neq \emptyset\}]$ of~$H$ is connected. Since $(H, \cV)$ has (inner-)radial spread $r$, the subgraph~$H_P$ has diameter at most $2r \cdot ||P|| \leq 2rk$. As~$P$ starts in~$v$ and ends in $u$, the subgraph~$H_P$ includes~$H_v$ and~$H_u$. Let~$h'$ be the node of $H$ witnessing that~$H_v$ has radius at most~$r$ (in $H_v$); in particular $h' \in H_v \subseteq H_P$. Since also $h \in H_u \subseteq H_P$ and~$H_P$ is connected, there is a path~$Q$ from~$h$ to~$h'$ in~$H_P$ of length at most $2rk$. As $Q \subseteq H_P$ and every bag~$V_g$ for $g \in H_P$ contains a vertex~$w$ of~$P$, which implies $d_G(w, v) \leq ||P|| \leq k$, it follows by the definition of the new bags $V'_h$ that $Q \subseteq H_P \subseteq H'_v := H[\{h \in H \mid v \in V'_h\}]$. Hence, $h'$ witnesses that~$H'_v$ has radius at most~$2rk$, so $(H, \cV')$ has (inner-)radial spread at most $2rk$.
\end{proof}

\subsection{Interplay with quasi-isometries} \label{sec:QuasiIso}

In this section, we describe the interplay between quasi-isometries and honest \gd\ of bounded radial width and spread, i.e.\ we prove \cref{prop:QuasiIsoIntro}. For this, let us recall the definition of quasi-isometries in the case of graphs.

\begin{definition}[Quasi-isometry]
For given integers~$m, M \geq 1$ and~$a, A, r \geq 0$, an \emph{$(m,a,M,A,r)$-quasi-isometry} from a graph~$H$ to a graph~$G$ is a map~$\phi\colon V(H) \to V(G)$ such that

\begin{enumerate}[label=(Q\arabic*)]
    \item \label{quasiisom2.1} $d_H(h, h')\leq m \cdot d_G(\phi(h),\phi(h')) + a$ for every~$h, h' \in V(H)$,
    \item \label{quasiisom2.2} $d_G(\phi(h),\phi(h')) \leq M \cdot d_H(h,h') + A$ for every~$h, h' \in V(H)$, and
    \item \label{quasiisom2.3} for every vertex $v \in G$, there exists a node $h \in H$ with $d_G(v,\phi(h)) \leq r$.
\end{enumerate}
\end{definition}

\noindent Usually~(cf.\,\cite{georgakopoulos2023graph}*{Section~2.1}), quasi-isometries are denoted with only two parameters $L \geq 1$ and $C \geq 0$: an \emph{$(L, C)$-quasi-isometry} is precisely an $(L,LC,L,C,C)$-quasi-isometry.
Here, we use a more detailed set of parameters that allows us to describe in a more precise way how graph-decompositions and quasi-isometries interact.

The following is a well-known fact.

\begin{lemma}\label{lem:InverseQI}
    If a graph $H$ is $(L,C)$-quasi-isometric to a graph $G$, then $G$ is $(L,3LC)$-quasi-isometric to~$H$. \looseness=-1
\end{lemma}

We split the proof of \cref{prop:QuasiIsoIntro} into the following two lemmas.

\begin{lemma} \label{lem:GraphDecToQuasiIso}
    Let $(H,\cV)$ be an honest graph-decomposition of a graph~$G$ of outer-\radialWidth\ $r_0$ and outer-radial spread~$r_1$.
    Then there is a~$(2r_1,2r_1,2r_0,0,r_0)$-quasi-isometry from~$H$ to~$G$.
\end{lemma}

\begin{proof}
    Since~$(H, (V_h)_{h \in H})$ has outer-radial width~$r_0$, there exists for every bag~$V_h$ a vertex~$\phi(h) \in G$ with~$V_h \subseteq B_G(\phi(h), r_0)$.
    We show that the respective map~$\phi: V(H) \to V(G)$ is the desired quasi-isometry from~$H$ to~$G$.
    Note that the definition of~$\phi$ immediately implies~\cref{quasiisom2.3} by~\cref{ax:GraphDec1}.
    
    For~\cref{quasiisom2.2} we have to show that for every~$h, h' \in V(H)$, we have
    \begin{equation*}
        d_G(\phi(h),\phi(h')) \leq 2r_0 \cdot d_H(h,h').
    \end{equation*}
    So let~$h$ and~$h'$ be arbitrary vertices of~$H$, and let~$h_0h_1 \dots h_\ell$ be a shortest~$h$--$h'$ path in~$H$.
    We now aim to build from this path a~$\phi(h)$--$\phi(h')$ walk in~$G$ of length at most~$2 r_0 \ell$.
    For every~$i \in \{1,\dots,\ell\}$, we fix a vertex~$v_i \in V_{h_{i-1}} \cap V_{h_i}$; such~$v_i$ exist because the considered~$H$-decomposition of~$G$ is honest by assumption.
    Furthermore, we set~$v_0 := \phi(h)$ and~$v_{\ell+1} := \phi(h')$.
    Now for every~$i \in \{0, \dots, \ell\}$, let~$P_i$ be a shortest~$v_i$--$v_{i+1}$~path in~$G$.
    Since our~$H$-decomposition of~$G$ has outer-radial width~$r_0$, the paths~$P_0$ and~$P_\ell$ have length at most~$r_0$ and all other~$P_i$ have length at most~$2 r_0$.
    Thus, $v_0P_0v_1 \dots v_\ell P_\ell v_{\ell+1}$ is a~$\phi(h)$--$\phi(h')$~walk in~$G$ of length at most~$r_0 + 2r_0(\ell-1) + r_0$, as desired.

    For~\cref{quasiisom2.1}, we have to check that for every $h, h' \in V(H)$, 
    \begin{equation*}
        d_H(h,h') \leq 2r_1 \cdot d_G(\phi(h),\phi(h')) + 2r_1.  
    \end{equation*}
	So let $h$ and $h'$ be arbitrary vertices of $H$ and let $v_0 v_1 \ldots v_{\ell}$ be a shortest $\varphi(h)$--$\varphi(h')$ path in $G$.
	Similar as before, we build a $h$--$h'$ walk in $H$ of length at most $2 r_1 \ell + 2r_1$.
	For every $i \in \{1, \dots, \ell\}$ we fix a node $h_i\in V(H)$ satisfying $v_{i-1}v_i\in V_{h_i}$;
	such $h_i$ exist by \ref{ax:GraphDec1}.
	Furthermore, we set $h_0\coloneqq h$ and $h_{\ell + 1}\coloneqq h'$.
	For every $i \in \{0, \dots, \ell\}$, let $P_i$ be a shortest $h_{i}$--$h_{i+1}$~path in $H$.
	As $h_{i},h_{i+1}\in V(H_{v_{i}})$ for all $i \in \{0, \dots, \ell\}$ and $(H, \cV)$ has outer-radial spread $r_1$, all $P_i$ have length at most $2 r_1$. 
	Thus, $h_0P_0h_1 \dots h_\ell P_\ell h_{\ell+1}$ is an $h$--$h'$ walk in $H$ of length at most $2r_1(\ell+1)$, as desired.
\end{proof}

\begin{lemma}\label{lem:QuasiIsoToGraphDec}
    Let~$\phi$ be an~$(m,a,M,A,r)$-quasi-isometry from a graph~$H$ to a graph~$G$, and let $r'\geq r$ be an integer.
    Write~$B_h :=  B_G(\phi(h), r')$ and set
    \begin{equation*}
        V_h := \bigcup_{h' \in B_H(h,mr'+ \lceil (m+a)/2 \rceil)} B_{h'}.
    \end{equation*}
    Then~$(H, (V_h)_{h \in H})$ is an honest graph-decomposition of~$G$ of outer-\radialWidth\ at most~$r' + M(mr'+ \lceil (m+a)/2 \rceil) + A$ and (inner-)radial spread at most~$4mr' + m + 2a +1$.
    Moreover, if $r' \geq M+A$, then even the (inner-)\radialWidth\ is at most $4mr' + m + 2a +1$.
\end{lemma}

\begin{proof}
    We first show that the pair~$(H, (V_h)_{h \in H})$ is indeed an~$H$-decomposition of~$G'$.
    It then follows immediately from the definition of the~$V_h$ that $(H, \cV)$ is honest (as $\phi(h) \in V_h$ for all $h \in V(H)$ and $\phi(h), \phi(h') \in V_h \cap V_{h'}$ for all $hh' \in E(H)$ because $m \geq 1$).
    
    \cref{ax:GraphDec1}:
    By~\cref{quasiisom2.3}, each vertex of~$G$ is contained in some~$B_h$.
    Now consider an edge~$e = v_0v_1$ of~$G$.
	There are nodes~$h_0$ and~$h_1$ of~$H$ such that~$v_0 \in B_{h_0}$ and~$v_1 \in B_{h_1}$.
	Hence, $\dist_G(\phi(h_0), \phi(h_1))$ is at most~$2r'+1$.
	By~\cref{quasiisom2.1}, there exists a path~$P$ in~$H$ of length at most~$m(2r'+1)+a$.
	There exists a `middle' vertex~$h$ on this path~$P$ such that both~$h_0$ and~$h_1$ are contained in~$B_H(h, mr'+ \lceil (m+a)/2 \rceil)$, so both~$v_0$ and~$v_1$ are contained in~$B_h \subseteq V_h$.
    In particular, $e = v_0 v_1 \in G[V_h]$, as desired.
	
	\cref{ax:GraphDec2}: 
	Let~$v$ be any vertex of~$G$, and let~$h_0$ and~$h_1$ be nodes of~$H$ such that~$v \in V_{h_0} \cap V_{h_1}$.
	We want to find an~$h_0$--$h_1$ path~$P$ in~$H$ such that~$V_h$ contains~$v$ for every node~$h \in P$.
	By definition of the bags~$V_h$, there are~$h'_0 \in \ball{H}{h_0}{mr'+ \lceil (m+a)/2 \rceil}$ and~$h'_1 \in \ball{H}{h_1}{mr'+ \lceil (m+a)/2 \rceil}$ such that~$v \in B_{h'_0} \cap B_{h'_1}$.
	Hence,~$\dist_G(\phi(h'_0),\phi(h'_1))$ is at most~$2r'$.
	So by~\cref{quasiisom2.1}, there is an~$h'_0$--$h'_1$ path~$P'$ in~$H$ of length at most~$2mr'+a$.
	Let~$W$ be the walk~$P_0 P' P_1$ in~$H$ joining~$h_0$ and~$h_1$ where~$P_0$ is a shortest~$h_0$--$h'_0$ path in~$H$ and~$P_1$ a shortest~$h'_1$--$h_1$ path in~$H$.
	It follows directly from the construction of~$W$ that~$h'_0$ or~$h'_1$ is contained in~$B_H(h, mr'+ \lceil (m+a)/2 \rceil)$ for every node~$h$ visited by~$W$.
    Since~$v \in B_{h'_0} \cap B_{h'_1}$, this implies~$v \in V_h$ for every node~$h$ visited by~$W$ by the definition of~$V_h$.
    In particular, $W$ contains a~$h_0$--$h_1$ path~$P$ which is as desired.
    
    Secondly, let us verify that~$(H, (V_h)_{h \in H})$ has the desired radial spread and outer-radial width.
    For the radial spread, observe that the above constructed walk~$W$ has length at most~$2(mr'+ \lceil (m+a)/2 \rceil) + 2mr'+a \leq 2mr' + m+a + 1 + 2mr' +a = 4mr' + m + 2a +1$; so the radial spread of~$(H,(V_h)_{h \in H})$ is as desired.
    
    For the outer-radial width, consider any node~$h \in H$ and vertex~$v \in V_h$.
    By definition, there is~$h' \in B_H(h,mr'+ \lceil (m+a)/2 \rceil)$ such that~$v \in B_{h'}$.
    By~\cref{quasiisom2.2}, we obtain
    \begin{equation*}
        \dist_G(v,\phi(h)) \leq \dist_G(v,\phi(h')) + \dist_G(\phi(h),\phi(h')) \leq r' + M(mr'+ \lceil (m+a)/2 \rceil) + A.
    \end{equation*}
    Thus, every~$v \in V_h$ has distance at most~$r' + M(mr'+ \lceil (m+a)/2 \rceil) + A$ from~$\phi(h)$.
    This yields the desired outer-radial width.

    To obtain the moreover-part, let us investigate the above equation in more detail.
    Any shortest $v$--$\phi(h')$ in $G$ is contained in $B_h' \subseteq V_h$.
    Fix a shortest $h'$--$h$ path $Q$ and replace each edge $h_0h_1$ of $Q$ by a shortest $\phi(h_0)$--$\phi(h_1)$ path $Q_{h_0h_1}$ in $G$ to obtain a $\phi(h)$--$\phi(h')$ walk $Q'$ in $G$.
    If $r' \geq  M+A$, the path $Q_{h_0h_1}$ is contained in $B'_{h_0} \subseteq V_h$.
    Thus, the (inner-)radial width is already at most $ r' + M(mr'+ \lceil (m+a)/2 \rceil) + A$.
\end{proof}

\begin{proof}[Proof of~\cref{prop:QuasiIsoIntro}]
    Use~\cref{lem:GraphDecToQuasiIso} for \ref{itm:QuasiIsoIntro:GD-Iso} and~\cref{lem:QuasiIsoToGraphDec} for \ref{itm:QuasiIsoIntro:Iso-GD}.
\end{proof}

To conclude this section, let us look at a possible path-way towards omitting the condition on the radial spread.
Berger and Seymour proved that a graph has bounded \radialHWidth{tree} if and only if it is quasi-isometric to a tree~\cite{bergerseymourboundeddiamTD}*{4.1}.
More precisely, they show that if~$G$ has a~$T$-decomposition of low radial-width for some tree~$T$, then $G$ is quasi-isometric to some tree~$T'$ that is obtained from a subtree of~$T$ by contracting and subdividing edges.

We ask whether such an argument transfers to arbitrary decomposition graphs:

\begin{question} \label{qu:QuasiIsoForMinorClosed}
    Given an integer~$r \ge 1$, does there exist an integer~$R$ such that if a graph~$G$ has a decomposition modelled on a graph~$H$ of radial width at most~$r$, then there exists an honest decomposition of~$G$ modelled on a graph $H'$ obtained from a subgraph of $H$ by subdividing and contracting edges such that both its radial width and radial spread are at most~$R$? 
\end{question}

\noindent An affirmative answer to~\cref{qu:QuasiIsoForMinorClosed} would in particular imply the equivalence of small~\radialHWidth{$\cH$} and quasi-isometry to an element of~$\cH$ for graph classes~$\cH$ closed under takings subgraphs, and contracting and subdividing edges.

\subsection{Quasi-geodesic topological minors} \label{subsec:QuasiGeoTopMinors}

In this section, we study quasi-geodesic topological minors as obstructions to small radial width.
We show in~\cref{lem:QuasiGeodTopMinorAreFatMinors} that quasi-geodesic topological minors are also an instance of the more general `fat minors' used in \cref{conj:GeorgakopoulosPapasoglu}.
Fat minors are obstructions to small radial width (\cref{lem:FatMinorObstructRadialWidth}), which then implies that quasi-geodesic topological minors are also such obstructions (\cref{lem:quasigeodesictopologicalminorsareobstructions}).

Let us recall the definition of~$(\geq k)$-subdivisions~$T_k X$ and $c$-quasi-geodesity from the introduction.
\begin{definition}[$T_k X$]
    A \emph{$(\geq k)$-subdivision} of a graph~$X$, which we denote with \emph{$T_k X$}, is a graph which arises from~$X$ by subdividing every edge at least $k$ times, i.e.\ replacing every edge in~$X$ with a new path of length at least $k+1$ such that no new path has an inner vertex in $V(X)$ or on any other new path. 
\end{definition}

\noindent The original vertices of $X$ are the \emph{branch vertices of the $T_k X$}.
Note that the well-known topological minor relation can be phrased in terms of $(\geq 0)$-subdivisions in that a graph $X$ is a \emph{topological minor} of a graph~$G$ if $G$ contains a~$T_0X$ as a subgraph.

\begin{definition}[Quasi-geodesic]
    A subgraph~$X$ of a graph~$G$ is \emph{$c$-quasi-geodesic (in~$G$)} for some~$c \in \N$ if for every two vertices~$u, v \in V(X)$ we have~$\dist_G(u, v) \le c \cdot \dist_X(u, v)$.
    We call~$X$ \emph{geodesic} if it is~$1$-quasi-geodesic. 
\end{definition}

Let us now turn to the question how quasi-geodesic topological minors relate to fat minors, the obstruction investigated by Georgakopoulos and Papasoglu in~\cref{conj:GeorgakopoulosPapasoglu}.
For this, let us first recall the definition of `fat minors'.
Let $G, X$ be graphs.
A \emph{model} $(\cV,\cE)$ of $X$ in $G$ is a collection $\cV$ of disjoint sets $V_x \subseteq V(G)$ for vertices $x$ of $X$ such that each $G[V_x]$ is connected, and a collection $\cE$ of internally disjoint $V_{x_0}$--$V_{x_1}$ paths $E_{e}$ for edges $e=x_0x_1$ of $X$ which are disjoint from every $V_x$ with $x \neq x_0, x_1$.\footnote{Note that we here deviate from the definition of model from \cite{DiestelBook}. However, by enlarging the branch sets $V_x$ along the `adjacent' branch paths $E_{xy}$, we obtain that this notion of model is equivalent to the notion of model from \cite{DiestelBook}.}
The $V_x$ are its \emph{branch sets} and the $E_e$ are its \emph{branch paths}.
Then $X$ is a \emph{minor} of $G$ if $G$ contains a model of $X$.

A model $(\cV, \cE)$ of $X$ in $G$ is \emph{$K$-fat} for $K \in \N$ if $\dist_G(Y,Z) \geq K$ for every two distinct $Y,Z \in \cV \cup \cE$ unless $Y = E_e$ and $Z = V_x$ for some vertex $x \in V(X)$ incident to $e \in E(X)$, or vice versa.
Then $X$ is a \emph{$K$-fat minor} of $G$ if $G$ contains a $K$-fat model of $X$.
We remark that the $0$-fat minors of $G$ are precisely its minors.

\begin{lemma} \label{lem:QuasiGeodTopMinorAreFatMinors}
    If a graph~$G$ contains a~$\subdivk{X}{k}$ as a $c$-quasi-geodesic subgraph for some graph~$X$ and~$c, k \in \N$ with $k \geq 2$, then $X$ is a~$K$-fat minor in~$G$ for~$K = \lfloor \frac{k-2}{2c} \rfloor$.
\end{lemma}
\begin{proof}
    Denote the $c$-quasi-geodesic subgraph of $G$ which is a~$\subdivk{X}{k}$ by $X_G$.
    By definition, any two branch vertices of~$X_G$ have distance at least~$k+1$ in~$X_G$.
    For~$x \in V(X)$, we then choose~$V_x = \ball{X_G}{x}{\lceil(k+1)/4\rceil}$, and for~$e = xx' \in E(X)$, we let~$E_e$ be the (unique)~$V_x$-$V_x'$ path in~$X_G$ not meeting any other branch vertices; in particular, each~$E_e$ has length at least~$(k+1) - 2\lceil(k+1)/4\rceil \ge k/2 - 1$.
    We then let~$\cV$ be the set of all~$V_x$ and~$\cE$ be the set of all~$E_e$.
    Since~$X_G$ is a~$\subdivk{X}{k}$, this construction directly yields~$\dist_{X_G}(Y, Z) \ge k/2-1$ for every two distinct $Y,Z \in \cV \cup \cE$ unless $Y = E_e$ and $Z = V_x$ for some vertex $x \in V(X)$ incident to $e \in E(X)$, or vice versa.
    Since~$X_G$ is a $c$-quasi-geodesic subgraph of~$X$, we then obtain~$\dist_{G}(Y,Z) \ge \dist_{X_G}(Y, Z)/c \ge \frac{k-2}{2c}$; so~$X_G$ is indeed a~$K$-fat model of~$X$ in~$G$.
\end{proof}

\begin{lemma}\label{lem:FatMinorObstructRadialWidth}
    Suppose $X$ is a $K$-fat minor in a graph $G$ for some $K \in \N$. 
    If $(H,\cV)$ is a \gd\ of $G$ of radial width less than $K/2$, then $X$ is a minor of $H$.
\end{lemma}

\begin{proof}
    Fix a $K$-fat model $(\cU,\cE)$ of $X$ in $G$.
    We aim to define a model $(\cU',\cE')$ of $X$ in $H$.
    For every vertex $x \in X$, let $U'_x$ be the set of nodes $h \in H$ whose corresponding bag $V_h$ meets $U_x$.
    Note that the $U'_x$ are pairwise disjoint, as the radial width of $(H,\cV)$ is $< K/2$ and $(\cU, \cE)$ is $K$-fat.
    Since the $G[U_x]$ are connected, the $H[U'_x]$ are connected by \cref{lem:GraphDecConnSubgraph}.
    
    Analogously, for each edge $xy$ of $X$, the subgraph induced on the nodes $h \in H$ whose $V_h$ meets $E_{xy}$ is connected by \cref{lem:GraphDecConnSubgraph}, meets $U'_x$ and $U'_y$, and all these subgraphs are pairwise disjoint.
    Thus, we may pick a $U'_x$--$U'_y$ path $E'_{xy}$ in each of these subgraphs to obtain a model of $X$ in $H$.
\end{proof}

Combining the previous two lemmas, we obtain that quasi-geodesic topological minors are indeed obstructions to small radial width:

\begin{proposition}\label{lem:quasigeodesictopologicalminorsareobstructions}
    If a graph~$G$ contains a~$\subdivk{X}{4ck+2}$ as a $c$-quasi-geodesic subgraph for some graph~$X$ and~$c, k \in \N$ with $k \geq 2$, then~$G$ admits no $H$-decomposition of radial width less than $k$ modelled on a graph~$H$ with no $X$ minor.
    \qed
\end{proposition}

\section{Radial path-width} \label{sec:PathWidth}

In this section we prove \cref{thm:path-width-intro}, which we restate here for convenience. 
\begin{customthm}{\cref*{thm:path-width-intro}} \label{thm:pathWidth}
	Let~$k \in \N$.
    If a connected graph~$G$ contains no~$\subdivk{K^3}{4k+1}$ as a geodesic subgraph and no~$\subdivk{K_{1,3}}{3k}$ as a~$3$-quasi-geodesic subgraph, then~$G$ admits an honest decomposition modelled on a path $P$ of radial width at most~$18k + 2$ and radial spread at most~$18k+1$.

    Moreover, $P$ is $(1, 18k+2)$-quasi-isometric to~$G$.
\end{customthm}

\noindent We remark that we did not optimise the bounds on the radial width and radial spread.
\medskip

In fact, we show the following stronger statement, which immediately implies~\cref{thm:pathWidth}.

\begin{theorem} \label{thm:RadialPathWith:Technical}
    Let~$k \in \N$, and let $P$ be a longest geodesic path in a connected graph~$G$. If~$G$ contains no~$\subdivk{K^3}{4k+1}$ as a geodesic subgraph and no $\subdivk{K_{1,3}}{3k}$ as a~$3$-quasi-geodesic subgraph, then either $P$ has length at most $18k+2$ or every vertex of $G$ has distance at most $9k$ from $P$.
\end{theorem}

Let us first show that \cref{thm:RadialPathWith:Technical} indeed implies \cref{thm:pathWidth}. For this, we need the following auxiliary lemma, which asserts that taking balls of radius $r \in \N$ around a quasi-geodesic subgraph $H$ of a graph $G$ yields a `partial decomposition' of $G$ modelled on $H$ (see \cref{fig:ball-decomp}).

\begin{lemma} \label{lem:UnionOfBallIsDecomp}
    Let $G$ be a graph, and let $H$ be a $c$-quasi-geodesic subgraph of $G$ for some~$c \in \N$.
    Given~$r \in \N$ we write~$B_h := B_G(h, r)$ for~$h \in H$ and set
    \begin{equation*}
        V_h := \bigcup_{h' \in B_H(h, cr + \lceil c/2 \rceil)} B_{h'}.
    \end{equation*}
    Then $(H, (V_h)_{h \in H})$ is an honest~$H$-decomposition of $G' := G[\bigcup_{h \in H} B_h]$ of {\radialWidth} at most $(c+1) r + \lceil c/2 \rceil$ and radial spread at most $4cr+c+1$.

    Moreover, if $H$ is a path, then $(H, (V_h)_{h \in H})$ has radial spread at most $(c+1)r+\lceil c/2 \rceil$.
\end{lemma}
\begin{proof}
    The pair~$(H, (V_h)_{h \in H})$ is an honest graph-decomposition of~$G$ by~\cref{lem:QuasiIsoToGraphDec}, since the embedding~$\phi$ of a~$c$-quasi-geodesic subgraph~$H$ of~$G$ into~$G[\bigcup_{h \in H} B_G(h, r)]$ is a~$(c, 0, 1, 0, r)$-quasi-isometry.
    By~\cref{lem:QuasiIsoToGraphDec} and its moreover-part, this \gd\ has (inner-)radial width~$r + (c r+ \lceil c/2 \rceil) = (c+1) r + \lceil c/2 \rceil$ and radial spread $4rc+c+1$.
        
    For the `moreover'-part, assume that $H = h_0 \dots h_n$ is a path, and let $v$ be any vertex of $G$. Let $i, j \in [n]$ be the smallest and largest integer such that $v \in B_{h_k} = B_G(h_k, r)$ for $k = i,j$. Then $d_G(h_i, h_j) \leq 2r$, and hence $j-i \leq 2cr$ since $H$ is $c$-quasi-geodesic in $G$. By the definition of the $V_h$, it follows that only~$V_{h_k}$ with $i-cr+\lceil c/2\rceil \leq k \leq j+cr+ \lceil c/2\rceil$ contain $v$, and hence $H_v$ has radius at most $(c+1)r + \lceil c/2\rceil$ as it is contained in $B_H(h_k, (c+1)r + \lceil c/2\rceil)$ for $k := \lfloor (j-i)/2\rfloor$.
\end{proof}

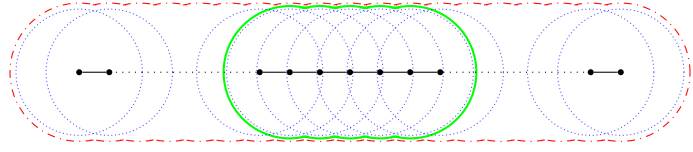
\begin{figure}[h]
    \centering
    \begin{tikzpicture}[scale=0.4,smallVertex/.style={inner sep=1}]
        \node[smallVertex] (a9) at (-9,0) {};
        \node[smallVertex] (a8) at (-8,0) {};
        \node[smallVertex] (a7) at (-7,0) {};
        \node[smallVertex] (a6) at (-6,0) {};
        \node[smallVertex] (a5) at (-5,0) {};
        \node[smallVertex] (a4) at (-4,0) {};
        \node[smallVertex] (a3) at (-3,0) {};
        \node[smallVertex] (a2) at (-2,0) {};
        \node[smallVertex] (a1) at (-1,0) {};
        \node[smallVertex] (a0) at (0,0) {};
        \node[smallVertex] (b9) at (9,0) {};
        \node[smallVertex] (b8) at (8,0) {};
        \node[smallVertex] (b7) at (7,0) {};
        \node[smallVertex] (b6) at (6,0) {};
        \node[smallVertex] (b5) at (5,0) {};
        \node[smallVertex] (b4) at (4,0) {};
        \node[smallVertex] (b3) at (3,0) {};
        \node[smallVertex] (b2) at (2,0) {};
        \node[smallVertex] (b1) at (1,0) {};

                \foreach \x in {a0,a1,a2,a3,a4,a5,a6,a7,a8,a9,b1,b2,b3,b4,b5,b6,b7,b8,b9}{
            \draw[red, dashdotted] (\x) circle (2.3);
        }
        \foreach \x in {a0,a1,a2,a3,a4,a5,a6,a7,a8,a9,b1,b2,b3,b4,b5,b6,b7,b8,b9}{
            \fill[white] (\x) circle (2.25);
        }

                \foreach \x in {a0,a1,a2,b1,b2}{
            \draw[thick, green] (\x) circle (2.2);
        }
        \foreach \x in {a0,a1,a2,b1,b2}{
            \fill[white] (\x) circle (2.15);
        }
                \foreach \x in {a0,a1,a2,a3,a8,a9,b1,b2,b3,b8,b9}{
            \draw[blue, opacity=0.7, densely dotted] (\x) circle (2.1);
        }
        
                \foreach \x in {a0,a1,a2,a3,a8,a9,b1,b2,b3,b8,b9}{
            \fill[black] (\x) circle (0.1);
        }

                \draw[black] (a0) -- (a1) -- (a2) -- (a3)  (a8) -- (a9);
        \draw[black] (a0) -- (b1) -- (b2) -- (b3)  (b8) -- (b9);
        \draw[black,dotted] (a3) -- (a8) (b3) -- (b8);
    \end{tikzpicture}
    
    \caption{A decomposition modelled on the black path using the blue balls as given by \cref{lem:UnionOfBallIsDecomp}
    for $r=2$, $c=1$. The bag corresponding to the centre vertex is green.}
    \label{fig:ball-decomp}
\end{figure}

\begin{proof}[Proof of \cref{thm:pathWidth} given \cref{thm:RadialPathWith:Technical}]
    Let $G$ be a connected graph that contains neither $\subdivk{K_{1,3}}{3k}$ as a $3$-geodesic subgraph nor $\subdivk{K^3}{4k+1}$ as a geodesic subgraph. Let~$P'$ be a longest geodesic path in~$G$. By \cref{thm:RadialPathWith:Technical}, either~$P'$ has length at most $18k+2$ or every vertex of~$G$ has distance at most $9k$ from~$P'$. 
    In the former case, it follows that~$G$ has radius at most $18k+2$. Let~$P$ be the trivial path on a single vertex~$p$. Then~$P$ is $(1, 18k+2)$-quasi-isometric to~$G$, and $(P, (V_p)_{p \in P})$ with $V_p := V(G)$ is an honest decomposition of~$G$ of radial width at most $18k+2$ and radial spread~$0$.

    So we may assume the latter case. Set $P := P'$, and apply \cref{lem:UnionOfBallIsDecomp} to $H := P$ and $r := 9k$. This yields an honest $P$-decomposition of $G' := G[B_G(P, 9k)]$ of radial width at most $18k + 1$ and radial spread at most $18k+1$. Since every vertex of $G$ has distance at most $9k$ from $P$, we have $G' = G$, and hence this is the desired decomposition of $G$.
\end{proof}

The remainder of this section is devoted to the proof of \cref{thm:RadialPathWith:Technical}. 
Let us first give a brief sketch of the proof. For this, let~$G$ be a connected graph and, let~$P$ be a longest geodesic path in $G$. For some suitably chosen~$r = r(k) < 9k$, we let $G_P := G[B_G(P, r)]$. We then analyse how the components of~$G - G_P$ attach to $G_P$, and show that either all vertices in a component have distance at most $9k$ from $P$ or we can use the component to find a~$\subdivk{K_{1,3}}{3k}$ as a~$3$-quasi-geodesic subgraph of~$G$ or a~$\subdivk{K^3}{3k}$ as a geodesic subgraph of~$G$.

The analysis of the components will be done in three lemmas, \cref{lem:CombiningGeodesicGraphs,lem:FindingALongGeodesicCycle,lem:ComponentsAttachingToTheEndOfP} below. They are stated in a slightly more general form than needed for the proof of~\cref{thm:RadialPathWith:Technical}, which enables us to use them later also in the proofs of~\cref{thm:cycle-width-intro,thm:star-width-intro}.

The first lemma, \cref{lem:CombiningGeodesicGraphs} shows that enlarging a $c$-quasi-geodesic subgraph with a shortest path to it yields a subgraph which is~$(2c+1)$-quasi-geodesic. This lets us find a $3$-quasi-geodesic $\subdivk{K_{1,3}}{3k}$ in~$G$ if some component of $G-G_P$ attaches `to the middle' of~$G_P$, that is, to some ball $B_G(p, r)$ where $p$ lies `in the middle' of $P$.
The second lemma, \cref{lem:FindingALongGeodesicCycle}, demonstrates that we can find a long geodesic cycle in~$G$ if some component of $G-G_P$ attaches to $G'$ close to the start and the end of $P$, but nowhere in between (see \cref{fig:banana-case}).
The third lemma, \cref{lem:ComponentsAttachingToTheEndOfP}, shows that if a component of $G-G_P$ attaches to $G_P$ only towards one end of $P$, then its vertices all have distance at most $9k$ from $P$.

\begin{lemma} \label{lem:CombiningGeodesicGraphs}
    Let $G$ be a graph, and let $X$ be a $c$-quasi-geodesic subgraph of $G$ for some~$c \in \N$.
    If~$P$ is a shortest~$v$--$X$ path in $G$ for some vertex~$v \in G$, then~$X \cup P$ is~$(2c+1)$-quasi-geodesic in~$G$.
\end{lemma}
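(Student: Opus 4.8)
The plan is to check the defining inequality $\dist_{X \cup P}(u, u') \le (2c+1)\,\dist_G(u, u')$ directly for all $u, u' \in V(X \cup P)$, distinguishing cases by whether $u$ and $u'$ lie on $P$ or in $X$. Let $w$ denote the endvertex of $P$ in $X$, so that $V(P) \cap V(X) = \{w\}$; if already $v \in V(X)$, then $P$ is trivial and $X \cup P = X$ is $c$-quasi-geodesic, hence $(2c+1)$-quasi-geodesic, so I may assume $v \notin V(X)$. Since $P$ is a shortest $v$--$X$ path it is also a shortest $v$--$w$ path, so every subpath of $P$ is geodesic in $G$; in particular $\dist_{X \cup P}(p, p') \le \dist_P(p, p') = \dist_G(p, p')$ for all $p, p' \in V(P)$.

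The one point worth recording first is the following: for every $u \in V(P)$, the $u$--$w$ subpath $uPw$ of $P$ is a shortest $u$--$X$ path in $G$. Indeed, replacing $uPw$ by a strictly shorter $u$--$X$ path would, concatenated with $vPu$, produce a walk from $v$ to $X$ shorter than $P$ and hence a $v$--$X$ path of length less than $\dist_G(v,X)$, which is impossible. Consequently $\dist_G(u, w) = \dist_G(u, X)$ for every $u \in V(P)$, and in particular $\dist_G(u, w) \le \dist_G(u, u')$ whenever $u' \in V(X)$.

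Given this, the three cases are short. If $u, u' \in V(X)$, then $\dist_{X \cup P}(u, u') \le \dist_X(u, u') \le c\,\dist_G(u, u')$ because $X$ is $c$-quasi-geodesic (this is also where $\dist_X(u,u')<\infty$, so finiteness is not an issue when $\dist_G(u,u')<\infty$). If $u, u' \in V(P)$, then $\dist_{X \cup P}(u, u') = \dist_G(u, u')$ by the first paragraph. Otherwise, say $u \in V(P)$ and $u' \in V(X)$; routing from $u$ along $P$ to $w$ and then through $X$ to $u'$ gives
\[
  \dist_{X \cup P}(u, u') \;\le\; \dist_G(u, w) + \dist_X(w, u') \;\le\; \dist_G(u, w) + c\,\dist_G(w, u').
\]
Now $\dist_G(w, u') \le \dist_G(w, u) + \dist_G(u, u')$ by the triangle inequality, and $\dist_G(u, w) \le \dist_G(u, u')$ by the observation above, so the right-hand side is at most $(c+1)\,\dist_G(u, w) + c\,\dist_G(u, u') \le (2c+1)\,\dist_G(u, u')$, as required.

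I do not expect a real obstacle here; the only step that carries the argument is the observation that the tail $uPw$ stays a shortest $u$--$X$ path, since this is precisely what turns the otherwise uncontrolled quantity $\dist_G(u, w)$ into something bounded by $\dist_G(u, u')$. The rest is bookkeeping with the four ambient metrics $\dist_G$, $\dist_X$, $\dist_P$, $\dist_{X \cup P}$ and with tracking the constant $2c+1$.
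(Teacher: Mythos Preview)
Your proof is correct and follows essentially the same route as the paper's: the key observation that the tail $uPw$ of $P$ remains a shortest $u$--$X$ path (hence $\dist_G(u,w)\le\dist_G(u,u')$ for $u'\in V(X)$), followed by the same triangle-inequality computation in the mixed case. The only differences are cosmetic (variable names and the explicit handling of the trivial case $v\in V(X)$).
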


\begin{proof}
	We have to show that, for every two vertices $u$ and~$w$ of $X \cup P$, the distance of $u$ and $w$ in $X \cup P$ is at most $(2c+1)$ times their distance in $G$.
	Since~$X$ is~$c$-quasi-geodesic in~$G$ and~$P$ is geodesic in~$G$, it is (by symmetry) enough to consider the case where $u$ is a vertex of $P$ and $w$ is a vertex of $X$.
    Let $x$ be the endvertex of~$P$ in~$X$.
	Since $P$ is a shortest $v$--$X$ path in $G$, $uPx$ is a shortest~$u$--$X$ path in~$G$ and hence $\dist_P(u, x) = \dist_G(u,x) \leq \dist_G(u,w)$ as~$w \in X$.
    We then have~$\dist_X(x, w) \le c \cdot \dist_G(x, w) \le c \cdot (\dist_G(x, u) + \dist_G(u, w))$, where the first inequality follows from~$X$ being $c$-quasi-geodesic in~$G$ while the second one applies the triangle inequality.
	Again using the triangle inequality, we can then combine these inequalities to
    \begin{align*}
         \dist_{X \cup P}(u, w) &\le \dist_{X \cup P}(u, x) + \dist_{X \cup P}(x, w) = \dist_P(u, x) + \dist_X(x, w) \\
         &\le \dist_G(u, x) + c \cdot (\dist_G(x, u) + \dist_G(u, w)) \le (2c + 1) \cdot \dist_G(u, w),
    \end{align*}
	which shows the claim.
\end{proof}

\begin{lemma} \label{lem:FindingALongGeodesicCycle}
	Let~$r, n, m_0, m_1 \in \N$ such that $m := n - m_0 - m_1 - 2r > 0$.
	Let~$G$ be a graph containing a geodesic path~$P = p_0 \dots p_n$ of length~$n$, and write~$B_i := B_G(p_i, r)$ for every~$p_i \in P$.
	Suppose that a component~$C$ of~$G - \bigcup_{i = 0}^n B_i$ has at least one neighbour in both~$\bigcup_{i = 0}^{m_0} B_i$ and~$\bigcup_{i = n - m_1}^n B_i$, but no neighbours in~$\bigcup_{i = m_0+1}^{n-m_1-1} B_i$.
	Then $G$ contains a geodesic cycle of length at least $2m$.
\end{lemma}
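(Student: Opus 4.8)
The plan is to first exhibit a long cycle built from a path through $C$ together with a ``return'' path running close to $P$, and then to replace it by a shortest cycle of the same rough shape, which I claim will be geodesic.

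\emph{Set-up.} As $C$ is a component of $G-\bigcup_{i=0}^nB_i$, every neighbour of $C$ lies in $\bigcup_{i=0}^nB_i$, and by hypothesis none lies in $B_{m_0+1}\cup\dots\cup B_{n-m_1-1}$; hence $N_G(C)=A\cup Z$ with $A:=N_G(C)\cap\bigcup_{i=0}^{m_0}B_i$ and $Z:=N_G(C)\cap\bigcup_{i=n-m_1}^nB_i$, both non-empty. Since $P$ is geodesic, $\dist_G(p_i,p_j)=j-i$ for $i\le m_0<n-m_1\le j$, so a vertex of $\bigcup_{i\le m_0}B_i$ and a vertex of $\bigcup_{j\ge n-m_1}B_j$ always lie at distance at least $(n-m_1-m_0)-2r=m$ in $G$; in particular $\dist_G(A,Z)\ge m$ and the balls $B_s,B_t$ appearing below are vertex-disjoint.

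\emph{A cycle of length $\ge 2m$.} Fix $a\in A\cap B_s$ with $s\le m_0$ and $z\in Z\cap B_t$ with $t\ge n-m_1$, and shortest paths $P_a$ from $p_s$ to $a$ and $P_z$ from $p_t$ to $z$ (each of length $\le r$, so contained in $B_s$, resp.\ $B_t$). Since $C$ is connected and $a,z\in N_G(C)$, there is an $a$--$z$ path $Q$ whose interior lies in $C$; then $Q$ is internally disjoint from $\bigcup_iB_i$, hence from $P$, and $|Q|\ge\dist_G(a,z)\ge m$. The subgraph $H':=P_a\cup Q\cup P_z\cup D$, where $D:=p_sPp_t$ (of length $t-s\ge m+2r$), is connected, and the first edge of $Q$ is not a bridge of $H'$ (after deleting it one still reaches its $C$-endpoint along $Q$, then via $P_z$, $D$, $P_a$), so $H'$ contains a cycle through that edge; as the interior vertices of $Q$ have degree $2$ in $H'$, such a cycle $Z^{*}$ contains all of $Q$. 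Write $Z^{*}=Q\cup R$ with $R:=Z^{*}-\mathring Q$ an $a$--$z$ path inside $P_a\cup P_z\cup D$, so $V(R)\subseteq B_s\cup V(D)\cup B_t$ and $V(R)\cap V(C)=\emptyset$. As $R$ joins $a$ to $z$, $|R|\ge\dist_G(a,z)\ge m$, whence $|Z^{*}|=|Q|+|R|\ge 2m$.

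\emph{Upgrading to a geodesic cycle.} Call a cycle of $G$ \emph{good} if it uses at least one edge joining $\bigcup_{i\le m_0}B_i$ to $C$ and at least one edge joining $\bigcup_{j\ge n-m_1}B_j$ to $C$; by construction $Z^{*}$ is good. Every good cycle passes through a vertex of $\bigcup_{i\le m_0}B_i$ and a vertex of $\bigcup_{j\ge n-m_1}B_j$, which lie at distance $\ge m$, so it has length $\ge 2m$. Let $Z^{\circ}$ be a shortest good cycle, and suppose it is not geodesic; pick $u,v\in Z^{\circ}$ with $\dist_G(u,v)<\dist_{Z^{\circ}}(u,v)$ and $\dist_G(u,v)$ minimum. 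A routine argument (any internal vertex of a shortest $u$--$v$ path lying on $Z^{\circ}$ would yield a closer violating pair) lets us take a shortest $u$--$v$ path $S$ in $G$ internally disjoint from $Z^{\circ}$; with $A_1,A_2$ the two $u$--$v$ arcs of $Z^{\circ}$ and $|A_1|\le|A_2|$, we get $|S|<|A_1|$. If both kinds of $C$-edge lie on one arc $A_i$, then $A_i\cup S$ is a strictly shorter good cycle, contradicting minimality. Otherwise $u,v$ separate the two $C$-edges on $Z^{\circ}$, and handling this last case is where the real difficulty lies: the idea is that then one of $u,v$ lies on a sub-path of $Z^{\circ}$ inside $C$, hence is a vertex of $C$ — and so far from every $p_i$ — while the other lies outside $C$, which together with the $m$-separation of $\bigcup_{i\le m_0}B_i$ from $\bigcup_{j\ge n-m_1}B_j$ should force $|S|\ge|A_1|$, a contradiction. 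I expect this separating case — in particular justifying that a shortest good cycle may be taken to meet $C$ essentially once, and converting ``$S$ leaves and re-enters the balls'' into a clean length inequality — to be the main obstacle; everything else is bookkeeping. (For the degenerate value $m=1$ the conclusion is immediate, as $G$ contains a cycle and every shortest cycle is geodesic.)
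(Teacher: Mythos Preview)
Your setup, the construction of~$Z^{*}$, and the claim that every ``good'' cycle has length at least~$2m$ are all correct. The gap is precisely where you flag it, but it is not bookkeeping: the property ``good'' is not preserved by chord-splitting. A shortest good cycle can have the shape
\[
a_1 \;\longrightarrow\; C \;\longrightarrow\; z_1 \;\longrightarrow\; (\text{outside }C) \;\longrightarrow\; z_2 \;\longrightarrow\; C \;\longrightarrow\; a_2 \;\longrightarrow\; (\text{outside }C) \;\longrightarrow\; a_1
\]
with $a_1,a_2\in A$ and $z_1,z_2\in Z$. If the chord~$S$ lies entirely in~$C$ and joins the first $C$-segment to the second, then one of the two resulting cycles carries only $Z$--$C$ boundary edges and the other only $A$--$C$ boundary edges; neither is good, and no length inequality saves you, since two vertices of~$C$ may be arbitrarily close in~$G$. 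The symmetric failure occurs when~$S$ lies entirely outside~$C$. Your hope that a shortest good cycle can be taken to ``meet~$C$ essentially once'' is exactly what is not automatic.

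The paper repairs this by using a parity invariant rather than a positivity one. With $M_0:=A$ and $M_1:=Z$, let~$W(O)$ be the number of $M_0$--$M_1$ subpaths of~$O$ whose interior lies in~$C$, and take a shortest cycle with $W(O)$ \emph{odd} (your own~$Z^{*}$ has $W(Z^{*})=1$, so one exists). A separate lemma shows that whenever a cycle~$O_1$ is split by an $O_1$-path into cycles~$O_2$ and~$O_3$, the sum $W(O_1)+W(O_2)+W(O_3)$ is even; hence if $W(O_1)$ is odd then one of $W(O_2),W(O_3)$ is odd and strictly shorter, forcing the shortest odd-$W$ cycle to be geodesic. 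In the four-crossing example above $W=2$, so that cycle is simply never selected. The point is that parity, not mere existence of both edge types, is what survives the splitting.
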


\begin{figure}[ht]
    \centering
    \begin{tikzpicture}[scale=0.4,smallVertex/.style={inner sep=1}]
        \node[smallVertex] (a9) at (-9,0) {};
        \node[smallVertex] (a8) at (-8,0) {};
        \node[smallVertex] (a7) at (-7,0) {};
        \node[smallVertex] (a6) at (-6,0) {};
        \node[smallVertex] (a5) at (-5,0) {};
        \node[smallVertex] (a4) at (-4,0) {};
        \node[smallVertex] (a3) at (-3,0) {};
        \node[smallVertex] (a2) at (-2,0) {};
        \node[smallVertex] (a1) at (-1,0) {};
        \node[smallVertex] (a0) at (0,0) {};
        \node[smallVertex] (b9) at (9,0) {};
        \node[smallVertex] (b8) at (8,0) {};
        \node[smallVertex] (b7) at (7,0) {};
        \node[smallVertex] (b6) at (6,0) {};
        \node[smallVertex] (b5) at (5,0) {};
        \node[smallVertex] (b4) at (4,0) {};
        \node[smallVertex] (b3) at (3,0) {};
        \node[smallVertex] (b2) at (2,0) {};
        \node[smallVertex] (b1) at (1,0) {};

                \foreach \x in {a0,a1,a2,a3,a4,a5,a6,a7,a8,a9,b1,b2,b3,b4,b5,b6,b7,b8,b9}{
            \draw[red, dashdotted] (\x) circle (2.3);
        }
        \foreach \x in {a0,a1,a2,a3,a4,a5,a6,a7,a8,a9,b1,b2,b3,b4,b5,b6,b7,b8,b9}{
            \fill[white] (\x) circle (2.25);
        }

                \foreach \x in {a6,a7,a8,a9,b6,b7,b8,b9}{
            \draw[blue, opacity=0.7, densely dotted] (\x) circle (2.1);
        }
        
                \foreach \x in {a6,a7,a8,a9,b6,b7,b8,b9}{
            \fill[black] (\x) circle (0.1);
        }

                \draw[black] (a9) -- (a8) -- (a7) -- (a6);
        \draw[black] (b9) -- (b8) -- (b7) -- (b6);
        \draw[black,dotted] (a6) -- (b6);

                \draw[densely dashed] (-5.5,3.5)
            to[out=45, in=135] (5.5,3.5)
            to[out=315, in=225] (8,3)
            to[out=45, in=315] (8,5)
            to[out=135, in=45] (-8,5)
            to[out=225, in=135] (-8,3)
            to[out=315, in=225] (-5.5,3.5);

                \node[smallVertex] (A) at (-7.5,3) {};
        \node[smallVertex] (A2) at (-7,3) {};
        \node[smallVertex] (B) at (-7.5,1.9) {};
        \node[smallVertex] (B2) at (-7,2) {};
        \node[smallVertex] (B3) at (-6.5,1.9) {};
        \node[smallVertex] (B4) at (-8,2) {};
        \node[smallVertex] (C) at (7.5,3) {};
        \node[smallVertex] (C2) at (7,3) {};
        \node[smallVertex] (C3) at (6.5,3) {};
        \node[smallVertex] (D) at (7.5,1.9) {};
        \node[smallVertex] (D2) at (7,2) {};
        \node[smallVertex] (D3) at (6.5,1.9) {};
        \node[smallVertex] (D4) at (8,2) {};
        \foreach \x in {A,A2,B,B2,B3,B4,C,C2,C3,D,D2,D3,D4}{
            \fill[black] (\x) circle (0.1);
        }
        \draw (B3) -- (A) -- (B) (A) -- (B2) -- (A2) (B4) -- (A2) -- (B) (D4) -- (C) -- (D) -- (C3) (C) -- (D2) -- (C3) -- (D3) (D4) -- (C2) -- (D2);
    \end{tikzpicture}
    \caption{The setting of \cref{lem:FindingALongGeodesicCycle}. 
    }
    \label{fig:banana-case}
\end{figure}

\noindent See \cref{fig:banana-case} for a sketch of the situation in \cref{lem:FindingALongGeodesicCycle}.
\medskip

The proof of~\cref{lem:FindingALongGeodesicCycle} builds on the study of how cycles interact with a given separation of the graph.
More formally, we have the following preparatory lemma.

\begin{figure}
	\begin{tikzpicture}
		\draw (0,0) -- (6,0) -- (6,4) -- (5,4) -- (5,0) -- (11,0) -- (11,4) -- (0,4) -- (0,0) (5,2) -- (6,2);
		\path[pattern=north east lines, pattern color=orange] (5,0) -- (6,0) -- (6,2) -- (5,2) -- (5,0);
		\path[pattern=north west lines, pattern color=green] (5,4) -- (6,4) -- (6,2) -- (5,2) -- (5,4);
		\node at (5.5,4.5) {$S$};
		\node at (8.5,4.5) {$C$};
		\node at (5.5,1) {$M_0$};
		\node at (5.5,3) {$M_1$};
		\draw[densely dotted, black] (4,1.5) -- (7,2.5) node[pos=.25,below,black] {$Q$};
		\draw[dashed, purple] (4,1.5) to[out=-130, in=-90] (3,2) to[out=90, in=180] (5.1,3.1) to[out=0,in=0] (5.1,3.4) to[out=180, in=-90] (4.8,3.55) to[out=90, in=180] (5.1,3.7) to (7,3.7) to[out=0,in=0] (7,3.9) to[out=180,in=90] (2.5,2) to[out=-90,in=180] (5.1,1.25) to[out=0,in=-50] (7,2.5);
		\draw[dashdotted, teal] (7,2.5) to[out=130, in=130] (7.5,2.5) to[out=-50, in=0] (5.5,0.8) to[out=180, in=-90] (2,1.8) to[out=90, in=90] (1.5,1.8) to[out=-90, in=180] (5.5,0.4) to[out=0, in=-90] (8.5,2) to[out=90,in=20] (6,3) to[out=200, in=0] (5.5,2.5) to[out=180,in=50] (4,1.5);
            \node at (8.5,3) {$O_1$};
	\end{tikzpicture}
	\caption{The setting of \cref{lem:DiscreteWindingNumber}. Here, $O_2$ is the cycle containing $Q$ (black, dotted line) and the purple, dashed part of $O_1$, and $O_3$ is the cycle containing $Q$ and the teal, dash dotted part of $O_1$.}
	\label{fig:paritycycle}
\end{figure}
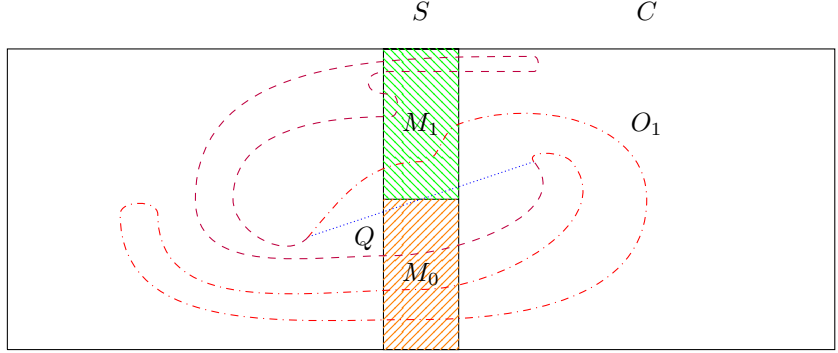

\begin{lemma} \label{lem:DiscreteWindingNumber}
    Let $G$ be a graph, $S$ a set of vertices and $C$ a component of $G - S$, and $\{M_0,M_1\}$ be a bipartition of $S$.
    Given a cycle $O$ in $G$, let $W(O)$ be the number of ~$M_0$--$M_1$ paths in~$O$ meeting~$C$.
    
	Suppose that~$Q$ is an~$O_1$-path of a cycle~$O_1$ in~$G$, and let~$O_2$ and~$O_3$ denote the two cycles in~$O_1 \cup Q$ containing~$Q$.
	Then~$W(O_1) + W(O_2) + W(O_3)$ is even.
\end{lemma}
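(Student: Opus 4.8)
The plan is to reduce the parity of $W(O)$ to a purely local edge count that is automatically additive over edge-disjoint subgraphs. Call an edge of $G$ \emph{marked} if one of its endvertices lies in $C$ and the other in $M_1$ (this is well defined since $C\cap S=\emptyset$, hence $C\cap M_1=\emptyset$), and for a cycle $O$ in $G$ let $m(O)$ be the number of marked edges of $O$. The heart of the argument will be the claim that
\[
  W(O)\ \equiv\ m(O)\pmod 2\qquad\text{for every cycle }O\text{ in }G .
\]

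Granting this claim, the lemma follows at once. Let $A$ and $B$ be the two $x$--$y$ subpaths of $O_1$, where $x,y$ are the endvertices of the $O_1$-path $Q$, so that $O_1=A\cup B$; since $Q$ is internally disjoint from $O_1$, the edge sets $E(A),E(B),E(Q)$ are pairwise disjoint, and the cycles of $O_1\cup Q$ that contain $Q$ are $A\cup Q$ and $B\cup Q$, so $\{O_2,O_3\}=\{A\cup Q,\,B\cup Q\}$. As $m$ simply counts edges, it is additive over edge-disjoint unions, so by the claim
\begin{align*}
  W(O_1)+W(O_2)+W(O_3)&\ \equiv\ m(A)+m(B)+\bigl(m(A)+m(Q)\bigr)+\bigl(m(B)+m(Q)\bigr)\\
  &\ =\ 2\bigl(m(A)+m(B)+m(Q)\bigr)\ \equiv\ 0\pmod 2 .
\end{align*}

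To prove the claim I would argue as follows. If $V(O)$ is contained in a single component of $G-S$, then $O$ contains no $M_0$--$M_1$ path meeting $C$ and no marked edge, so both sides are $0$. Otherwise the vertices of $O$ in $C$ span some maximal subpaths $R_1,\dots,R_\ell$ of $O$, each of them a proper subpath; each $R_i$ is flanked on $O$ by exactly two edges $e_i,e_i'$, and by maximality their endvertices off $R_i$ lie neither in $C$ nor in another component of $G-S$ (being adjacent on $O$ to a vertex of $C$), hence in $S$ — call them $s_i,s_i'$. Now every $M_0$--$M_1$ path of $O$ meeting $C$ has interior disjoint from $S$ and connected in $G-S$, hence contained in $C$, hence equal to some $R_i$, and conversely $s_iR_is_i'$ is such a path precisely when exactly one of $s_i,s_i'$ lies in $M_1$; thus $W(O)$ equals the number of indices $i$ for which exactly one of $s_i,s_i'$ lies in $M_1$. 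On the other hand every marked edge has its $C$-endvertex on a unique $R_i$ and is therefore one of $e_i,e_i'$, and $e_i$ (resp.\ $e_i'$) is marked exactly when $s_i$ (resp.\ $s_i'$) lies in $M_1$; hence $m(O)=\sum_{i=1}^{\ell}\bigl(\mathbf{1}[s_i\in M_1]+\mathbf{1}[s_i'\in M_1]\bigr)$, which is congruent mod $2$ to the number of indices $i$ with exactly one of $s_i,s_i'$ in $M_1$, i.e.\ to $W(O)$.

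The proof is short, so there is no single hard step; the care goes into the claim. I expect the main subtlety to be pinning down the intended reading of ``$M_0$--$M_1$ path in $O$ meeting $C$'' — the workable one is a subpath of $O$ meeting $S$ exactly in its two endvertices, one in each $M_j$, whose interior meets $C$ — and then using that this interior, being connected in $G-S$, is forced into $C$, which is exactly what makes the correspondence with the runs $R_i$ exact; one must also check that each $R_i$ has precisely two flanking edges, that these $2\ell$ edges are distinct, and that every marked edge flanks exactly one run. It is worth recording why this local, marked-edge reformulation is needed at all: if $x$ and $y$ happened to lie in $S$, then each $W(O_j)$ would split as a sum over arcs lying entirely inside $A$, $B$ or $Q$ and the lemma would be immediate; the marked-edge count is precisely what copes with the case that $x$ or $y$ lies inside $C$ (or in a further component), where arcs of $O_2$ or $O_3$ may straddle the point where $Q$ meets $O_1$.
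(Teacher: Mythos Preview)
Your proof is correct and takes a genuinely different route from the paper's. The paper argues directly with the arcs $P_1=Q$, $P_2$, $P_3$: it observes that any $M_0$--$M_1$ path in $\cP$ that lies inside a single $P_i$ is counted in exactly two of the three $W(O_j)$, and then carries out a page-long case analysis (on which of the $P_i$ meet $S$ and in which of $M_0,M_1$ their first and last $S$-vertices lie) to show that the remaining paths---those straddling one of the endpoints $u,v$ of $Q$---come in even numbers. Your reformulation via the edge count $m(O)$ (number of $C$--$M_1$ edges on $O$) circumvents this case analysis entirely: once $W(O)\equiv m(O)\pmod 2$ is established, additivity over the edge-disjoint arcs $A,B,Q$ gives the result in one line. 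What your approach buys is brevity and a conceptual explanation of \emph{why} the parity behaves well; what the paper's approach buys is that it never leaves the objects actually being counted, at the cost of length and bookkeeping. Your identification of the correct reading of ``$M_0$--$M_1$ path in $O$ meeting $C$'' (endpoints in $S$, one in each $M_j$, interior disjoint from $S$) matches the paper's implicit usage, and your observation that such a path has its interior forced into $C$ is exactly the remark the paper also makes.
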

\begin{proof}
    The reader may look at \cref{fig:paritycycle} to follow the proof more easily.
	Let~$u$ and~$v$ be the endvertices of~$Q$ on~$O_1$.
	Write~$P_1 := Q$, and let~$P_2$ and~$P_3$ be the two~$u$--$v$ paths in~$O_1$.
	Then the cycle~$O_1$ consists of~$P_2$ and~$P_3$, and without loss of generality, we may assume that~$O_2$ consists of $P_1$ and $P_2$ while $O_3$ consists of~$P_1$ and $P_3$.
	
	Let~$\cP$ be the set of $M_0$--$M_1$ paths in $G$ meeting~$C$ that are paths in at least one $O_i$.
	We remark that every path in~$\cP$ has all its inner vertices in~$C$ since~$S = M_0 \cup M_1$ separates~$C$ from the rest of the graph.
	Moreover, let us emphasise that, in the definition of~$W(O)$, we do not consider the direction in which~$O$ traverses the~$M_0$--$M_1$ paths it contains.
	
	If $T \in \cP$ is a subpath of some $P_i$, then $T$ is also a subpath of exactly two of the three cycles $O_1$, $O_2$ and $O_3$ and thus contributes exactly $2$ to the sum $W(O_1) + W(O_2) + W(O_3)$.
	So when we check that this sum is even, all the elements of $\cP$ that are a subpath of some $P_i$ contribute an even amount to the sum.
	In particular, if all elements of $\cP$ are paths in some $P_i$, then the claim holds.
	
	So let $\cP' \subseteq \cP$ consist of precisely those elements of $\cP$ that are not a path in any $P_i$, and suppose that~$\cP'$ is non-empty.
    This implies that at least one $P_i$ meets $M_0 \cup M_1$.
    Every path in $\cP$ is a subpath of one of the cycles $O_i$, which in turn is disjoint from the interior of some $P_i$.
    Hence every path in $\cP'$ meets the interior of at most, and thus precisely, $2$ of the~$P_i$.
	We remark that a path in an $O_i$ that is also a path in some $O_j$ with $i \neq j$ is already a path in some $P_i$.
	Thus, every element of $\cP'$ contributes exactly $1$ to the sum $W(O_1) + W(O_2) + W(O_3)$.
    In order to prove that this sum is even, it thus suffices to show that the number of paths in~$\cP'$ is even.
    We will check this by a case distinction.
	Note that a path in some $O_j$ is also a path in some $P_i$ if and only if it contains neither of $u$ and $v$ as an inner vertex.
    Hence, all paths in~$\cP'$ contain at least one of~$u$ and~$v$ as an inner vertex.
    Because $\cP'$ is non-empty and all inner vertices of its elements are contained in~$C$, at least one of $u$ and $v$ is contained in $C$.

	First, consider the case that all three paths $P_1$, $P_2$ and $P_3$ meet $M_0 \cup M_1$. In particular, this implies that no path in $\cP'$ contains both $u$ and $v$ as an inner vertex.
    If all paths $P_1$, $P_2$ and $P_3$ have their first vertex in $M_0 \cup M_1$ contained in the same element of $\{M_0, M_1\}$, then no element of~$\cP'$ contains~$u$ as an inner vertex. 
    Otherwise, precisely two paths $P_i$ have their first vertex in $M_0 \cup M_1$ contained in the same element of $\{M_0, M_1\}$. 
    In this case, there are exactly two elements of $\cP'$ that contain $u$ as an inner vertex.
    By symmetry, there are also exactly $2$ or $0$ elements of $\cP'$ that contain $v$ as an inner vertex. 
    Hence, $\cP'$ contains precisely $0,2$ or $4$ paths, and thus in this case $W(O_1) + W(O_2) + W(O_3)$ is even.
    
	Secondly, consider the case that precisely one $P_i$, say $P_1$, meets $M_0\cup M_1$.
    Since $P_2$ does not meet $M_0 \cup M_1$ and at least one of $u$ and $v$ is contained in $C$, this implies that both $u$ and $v$ are contained in $C$.
    Because $\cP'$ is non-empty, the first and last vertex of~$P_1$ in $M_0 \cup M_1$ are contained in distinct elements of $\{M_0, M_1\}$.
    Thus, $\cP'$ contains precisely two paths, one containing $P_2$ and one containing $P_3$.
	Hence, $W(O_1) + W(O_2) + W(O_3)$ is even.
	
	Lastly, consider the case that precisely two $P_i$ meet $M_0 \cup M_1$. 
	Say $P_3$ does not meet $M_0 \cup M_1$.
	Thus, again both $u$ and $v$ are contained in $C$.
	Here, we need to distinguish some more cases.
	
	The first case which we consider is that both $P_1$ and $P_2$ have their first and last vertex in $M_0 \cup M_1$ contained in distinct elements of $\{M_0,M_1\}$ and that the first vertex of $P_1$ in $M_0 \cup M_1$ and the first vertex of $P_2$ in $M_0 \cup M_1$ are contained in distinct elements of $\{M_0,M_1\}$.
	Then $\cP'$ contains precisely four elements: two containing $P_3$ and two not containing any edges of $P_3$.
	So again $W(O_1) + W(O_2) + W(O_3)$ is even.
	
	Next, we assume that both $P_1$ and $P_2$ have their first and last vertex in $M_0 \cup M_1$ contained in distinct elements of $\{M_0,M_1\}$ but that the first vertex of $P_1$ in $M_0 \cup M_1$ and the first vertex of $P_2$ in $M_0 \cup M_1$ are contained in the same element of $\{M_0,M_1\}$.
	Then $\cP'$ contains precisely two elements: both contain $P_3$.
	So again $W(O_1) + W(O_2) + W(O_3)$ is even.
	
	Now we assume that at least one of $P_1$ and $P_2$, say $P_1$, has their first and last vertex in $M_0 \cup M_1$ contained in the same element of $\{M_0,M_1\}$, say in $M_0$.
	Since $\cP'$ is non-empty and every path in $\cP'$ contains~$u$ or $v$ as inner vertex, this implies that either the first or the last vertex of~$P_2$ in $M_0 \cup M_1$ is contained in~$M_1$.
	If only the first or only the last vertex of $P_2$ in $M_0 \cup M_1$ is contained in $M_1$, then $\cP'$ contains precisely two elements: one of these contains~$P_3$ and the other does not contain edges of $P_3$.
	If both the first and the last vertex of $P_2$ in $M_0 \cup M_1$ are contained in $M_1$, then $\cP'$ contains also precisely two elements: neither of them contains edges of $P_3$.
	Again in both cases, $W(O_1) + W(O_2) + W(O_3)$ is even.
\end{proof}

\begin{proof}[Proof of~\cref{lem:FindingALongGeodesicCycle}]
	Note that, as $n > m_0 + m_1$, we in particular have $n - m_1 > m_0$.
	We set 
    \begin{equation*}
        M_0:= N_G(C) \cap \bigcup_{i = 0}^{m_0} B_i \text{ and } M_1 := N_G(C) \cap \bigcup_{i = n - m_1}^n B_i.
    \end{equation*}
	As $C$ has no neighbours in $\bigcup_{i = m_0+1}^{n-m_1-1} B_i$, the neighbourhood of~$C$ is equal to $M_0 \cup M_1$ and $C$ is a component of $G - M_0 - M_1$.
	
	First, we note that $M_0$ and $M_1$ are disjoint.
	Indeed, if $u\in B_{j_0}$ for some $j_0 \leq m_0$ and $v\in B_{j_1}$ for some~$j_1 \geq n-m_1$, then using the triangle inequality we obtain
	\begin{equation}
		\label{eq:distanceM0M1}
		n - m_0 - m_1 \leq \dist_P(p_{j_0},p_{j_1}) = \dist_G(p_{j_0},p_{j_1}) \leq \dist_G(p_{j_0},u) + \dist_G(u,v) + \dist_G(v,p_{j_1}) = d_G(u,v) + 2r.
	\end{equation}
	So $\dist_G(u,v) \geq n - m_0 - m_1 - 2r = m > 0$ by assumption.
	In particular, $u$ and $v$ are distinct vertices, and
	thus~$M_0$ and~$M_1$ are disjoint.
	
	We now define, for a cycle $O$ in $G$, the number $W(O)$ as the number of $M_0$--$M_1$ paths in $O$ which have an inner vertex in $C$.
	
	The remainder of the proof now consists of three steps.
	First, we show that a cycle~$O$ with~$W(O) \neq 0$ has length at least~$2m$.
    Second, we find a cycle~$O$ with odd~$W(O)$, and lastly we show that a shortest such cycle is geodesic in~$G$.
	
	\begin{claim} \label{cl:GeoCycLength}
		Every cycle $O$ in~$G$ with $W(O) \neq 0$ has length at least $2m$.
	\end{claim}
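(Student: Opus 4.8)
The plan is to extract from the hypothesis $W(O) \neq 0$ a single subpath of $O$ whose endpoints are forced far apart in $G$, and then to argue that the complementary arc of the cycle is at least as long. First I would unpack the definition: $W(O) \neq 0$ means there is at least one $M_0$--$M_1$ path $R \subseteq O$ that has an inner vertex in $C$. Since $N_G(C) = M_0 \cup M_1$ separates $C$ from the rest of $G$, every inner vertex of such an $R$ in fact lies in $C$, and its endvertices are some $u \in M_0$ and $v \in M_1$. In particular $u \neq v$, because $M_0$ and $M_1$ were shown above to be disjoint.

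Next I would bound $R$ from below metrically. As $R$ is a $u$--$v$ path in $G$, its length is at least $\dist_G(u,v)$, and the computation in~\eqref{eq:distanceM0M1} already establishes $\dist_G(u,v) \geq n - m_0 - m_1 - 2r = m$. Hence the length of $R$ is at least $m$. Finally I would use that $O$ is a cycle: removing the interior of $R$ from $O$ leaves the complementary $u$--$v$ arc $R'$, which is again a path in $G$ and therefore also has length at least $\dist_G(u,v) \geq m$. Since the edge sets of $R$ and $R'$ partition $E(O)$, the cycle $O$ has length at least $m + m = 2m$, as claimed.

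I do not anticipate a genuine obstacle here; the computation is essentially already done in~\eqref{eq:distanceM0M1}. The only point needing a little care is the bookkeeping around what an ``$M_0$--$M_1$ path in $O$ meeting $C$'' is — one must be sure such a path really joins a vertex of $M_0$ to a vertex of $M_1$ within $G$ (so that $\dist_G(u,v)$ applies) rather than, say, returning to the same side of the bipartition. This is exactly what the bipartition $\{M_0, M_1\}$ of $N_G(C)$ and the fact that $C$ is a component of $G - M_0 - M_1$ were arranged to guarantee, so the argument should go through directly.
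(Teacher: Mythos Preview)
Your argument is correct and follows the same strategy as the paper: split the cycle into two $u$--$v$ arcs and bound each below by $\dist_G(u,v) \ge m$ via~\eqref{eq:distanceM0M1}. The paper obtains its second arc by invoking the parity fact that any cycle contains an even number of $M_0$--$M_1$ paths, whereas you simply take the complementary arc of $R$ in $O$; your version is slightly more direct since it does not need that parity observation.
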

	\begin{claimproof}
		Since $W(O) \neq 0$, the cycle~$O$ contains at least one $M_0$--$M_1$ path.
		As the number of $M_0$--$M_1$ paths is even for every cycle in~$G$, the cycle~$O$ contains at least two $M_0$--$M_1$ paths, which then have to be internally disjoint.
        By \cref{eq:distanceM0M1}, every such path has length at least $m$, and thus $O$ has length at least $2m$.
	\end{claimproof}
	
	\begin{claim} \label{cl:GeoCycOddExists}
		There exists a cycle~$O$ in~$G$ with odd~$W(O)$.
	\end{claim}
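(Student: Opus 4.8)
The plan is to exhibit an explicit cycle $O$ through $C$ and then to check that $W(O)=1$. By hypothesis $C$ has a neighbour in $\bigcup_{i=0}^{m_0}B_i$ and a neighbour in $\bigcup_{i=n-m_1}^{n}B_i$, so we may fix $u_0\in M_0$ and $u_1\in M_1$, and then vertices $c_0,c_1\in C$ with $c_0u_0,c_1u_1\in E(G)$. Since $C$ is connected it contains a $c_0$--$c_1$ path, which together with the edges $u_0c_0$ and $c_1u_1$ yields a $u_0$--$u_1$ path $Q'$ all of whose inner vertices lie in $C$.

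Next I would build a second $u_0$--$u_1$ path $R$ that avoids $C$ entirely. Write $u_0\in B_{j_0}$ with $j_0\le m_0$ and $u_1\in B_{j_1}$ with $j_1\ge n-m_1$, so $j_0\le m_0<n-m_1\le j_1$. Concatenating a shortest $u_0$--$p_{j_0}$ path in $G$, the subpath $p_{j_0}Pp_{j_1}$ of $P$, and a shortest $p_{j_1}$--$u_1$ path in $G$ gives a $u_0$--$u_1$ walk, from which we extract the path $R$. Every vertex $w$ of a shortest $u_0$--$p_{j_0}$ path satisfies $d_G(w,p_{j_0})\le d_G(u_0,p_{j_0})\le r$ and hence lies in $B_{j_0}$; symmetrically the shortest $p_{j_1}$--$u_1$ path lies in $B_{j_1}$, and $P\subseteq\bigcup_iB_i$. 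Thus $R\subseteq\bigcup_iB_i$, and since $C$ is a component of $G-\bigcup_iB_i$ it is disjoint from $\bigcup_iB_i$, so $R$ avoids $C$. As the inner vertices of $Q'$ lie in $C$ while $R$ misses $C$, the paths $Q'$ and $R$ meet only in $\{u_0,u_1\}$, and therefore $O:=Q'\cup R$ is a cycle.

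Finally I would evaluate $W(O)$. Recall from the main argument that $N_G(C)=M_0\cup M_1$ and that $C$ is a component of $G-(M_0\cup M_1)$. The vertices of $O$ lying in $M_0\cup M_1$ — among them $u_0$ and $u_1$ — split $O$ into arcs, and the interior of each arc, being a connected subgraph of $G-(M_0\cup M_1)$, lies in a single component of $G-(M_0\cup M_1)$; hence it is either contained in $C$ or disjoint from $C$. As $R$ avoids $C$, the only arc whose interior meets $C$ is $Q'$ itself, whose interior is nonempty and contained in $C$ and whose endvertices $u_0,u_1$ lie in $M_0$ and $M_1$ respectively. Since every $M_0$--$M_1$ path in $O$ is exactly an arc of this kind with its two ends in different parts of $\{M_0,M_1\}$, precisely one of them — namely $Q'$ — has an inner vertex in $C$. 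Hence $W(O)=1$, which is odd.

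The only step requiring genuine care is producing $R$ inside $G-C$: this is where one uses that shortest paths towards the $p_i$ stay inside the corresponding balls $B_i$, which are disjoint from $C$ by the choice of $C$ as a component of $G-\bigcup_iB_i$. Once $R$ is in hand, both the claim that $O$ is a cycle and the computation $W(O)=1$ follow immediately from the observation that a connected piece of $G-N_G(C)$ lies in a single component of $G-N_G(C)$.
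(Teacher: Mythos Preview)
Your proof is correct and follows essentially the same approach as the paper's: build an $M_0$--$M_1$ path through $C$, connect its endpoints by a second path that stays inside $\bigcup_i B_i$ (via short paths to $P$ and a subpath of $P$), and observe that the resulting cycle has exactly one arc through $C$. The only minor difference is that the paper picks a \emph{shortest} $M_0$--$M_1$ path through $C$ and then argues separately that the two short connecting paths $R_0,R_1$ to $P$ are disjoint, whereas you take an arbitrary path through $C$ and simply extract a path $R$ from the concatenated walk; your version is slightly more streamlined in this respect, but the idea is the same.
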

	\begin{claimproof}
		Among all $M_0$--$M_1$ paths that contain an inner vertex in $C$, let $Q$ be a shortest such path, and denote by $v_0$ and $v_1$ its end vertices in $M_0$ and $M_1$, respectively.
		Let $R_0$ be a shortest $v_0$--$P$ path in $G$ and~$R_1$ a shortest $v_1$--$P$ path.
		Since $v_i \in M_0 \cup M_1 = N_G(C)$, both $R_i$ have length exactly $r$.
		If $R_0$ and~$R_1$ share a vertex~$x$, say such that $R_0x$ is at least as long as $R_1x$, then $R_1xR_0$ contains a path of length at most~$r$ from $v_1$ to some $p_s$ with $s\leq m_0$, contradicting $v_1 \in M_1$ and $M_0 \cap M_1 = \emptyset$.
		Hence, $R_0$ and~$R_1$ are disjoint.
		Now the concatenation $R_0 Q R_1$ is a path that starts and ends in distinct vertices of $P$ and is internally disjoint from $P$. 
		Hence, it can be closed by a (unique) subpath of~$P$ to a cycle $O$.
		By construction of $O$, there is precisely one $M_0$--$M_1$ path in $O$ that has an inner vertex in~$C$, and that is~$Q$.
		In particular, $W(O)$ is odd.
	\end{claimproof}
	
	\begin{claim} \label{cl:GeoCycShortenOdd}
		Let $O$ be a cycle in~$G$ with odd $W(O)$.
		If $O$ is not geodesic, then there is a cycle $O'$ that is shorter than $O$ such that $W(O')$ is odd.
	\end{claim}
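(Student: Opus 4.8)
The plan is to obtain $O'$ by \emph{rerouting} $O$ along a shortest path of $G$ between two of its vertices, and to control the parity of $W$ via \cref{lem:DiscreteWindingNumber}. Since $O$ is not geodesic, there are vertices of $O$ whose distance in $G$ is strictly smaller than their distance in $O$; among all such pairs I would fix one, say $\{x,y\}$, for which $\dist_G(x,y)$ is minimum, and let $Q$ be a shortest $x$--$y$ path in $G$.

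The first key step is to verify that $Q$ is internally disjoint from $O$, i.e.\ that $Q$ is an $O$-path in the sense of \cref{lem:DiscreteWindingNumber}. If some internal vertex $z$ of $Q$ lay on $O$, then, as subpaths of a shortest path are themselves shortest, $\dist_G(x,z)$ and $\dist_G(z,y)$ would be positive and sum to $\dist_G(x,y)$, hence both strictly below $\dist_G(x,y)$. By the minimal choice of $\{x,y\}$, neither $\{x,z\}$ nor $\{z,y\}$ can be a pair witnessing non-geodesity of $O$, so $\dist_G(x,z)=\dist_O(x,z)$ and $\dist_G(z,y)=\dist_O(z,y)$ (using $\dist_G\le\dist_O$ on $V(O)$, as $O$ is a subgraph of $G$). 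The triangle inequality in $O$ then gives $\dist_G(x,y)=\dist_O(x,z)+\dist_O(z,y)\ge\dist_O(x,y)$, contradicting the choice of $\{x,y\}$.

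The second key step is the parity count. Let $P_2$ and $P_3$ be the two $x$--$y$ arcs of $O$ with $P_2$ the shorter one, so that $|P_2|=\dist_O(x,y)$, and let $O_2:=Q\cup P_2$ and $O_3:=Q\cup P_3$ be the two cycles in $O\cup Q$ containing $Q$ (these are genuine cycles, since $|P_2|\ge 2$ because $\dist_O(x,y)>\dist_G(x,y)\ge 1$). Applying \cref{lem:DiscreteWindingNumber} with $O_1:=O$ shows that $W(O)+W(O_2)+W(O_3)$ is even, and since $W(O)$ is odd, exactly one of $W(O_2)$, $W(O_3)$ is odd. Finally, since $|Q|=\dist_G(x,y)<\dist_O(x,y)=|P_2|\le|P_3|$, both $|O_2|=|Q|+|P_2|$ and $|O_3|=|Q|+|P_3|$ are strictly smaller than $|O|=|P_2|+|P_3|$; hence whichever of $O_2$, $O_3$ has odd $W$ is the desired shorter cycle $O'$.

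I expect the only delicate point to be the first step — arranging that the rerouting path $Q$ meets $O$ exactly in its endpoints — which is precisely what the ``minimal bad pair'' choice secures; once $Q$ is a genuine $O$-path, both the invocation of \cref{lem:DiscreteWindingNumber} and the length comparison are routine.
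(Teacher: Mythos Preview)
Your proof is correct and follows essentially the same approach as the paper: reroute $O$ along a short $O$-path $Q$, obtain two shorter cycles $O_2$, $O_3$, and use \cref{lem:DiscreteWindingNumber} to conclude that one of them has odd $W$. The only difference is that the paper simply asserts the existence of an $O$-path $Q$ shorter than the $O$-distance of its endpoints, whereas you supply a careful ``minimal bad pair'' argument to verify that a shortest $x$--$y$ path is indeed internally disjoint from $O$; this extra care is not strictly needed (the fact is standard) but is certainly not wrong.
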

	\begin{claimproof}
		Since $O_1 := O$ is not geodesic, there is an~$O_1$-path $Q$ in $G$ with endvertices $u$ and $v$ in $O$ which is shorter than the distance of $u$ and $v$ in $O_1$.
		Let~$O_2$ and~$O_3$ be the two cycles in~$O_1 \cup Q$ which contain~$Q$.
		Since both $O_2$ and $O_3$ are shorter than $O_1$, it suffices to show that at least one of $W(O_2)$ and $W(O_3)$ is odd.
		Now~$W(O_1)$ is odd by assumption, so it is enough to show that the sum~$W(O_1) + W(O_2) + W(O_3)$ is even.
        This however follows from~\cref{lem:DiscreteWindingNumber} applied to the component $C$ together with the bipartition~$\{M_0, M_1\}$ of~$S=N_G(C)$.
	\end{claimproof}
	
	To formally complete the proof, pick a cycle $O$ in~$G$ with $W(O)$ odd such that $O$ is as short as possible among these.
	Such~$O$ exists by~\cref{cl:GeoCycOddExists} and is geodesic in~$G$ by~\cref{cl:GeoCycShortenOdd}.
	Moreover, $O$ has length at least~$2m$ by~\cref{cl:GeoCycLength}, as desired.
\end{proof}

\begin{lemma} \label{lem:ComponentsAttachingToTheEndOfP}
    Let~$U$ be a set of vertices of a connected graph~$G$, and let~$p_0 \in V(G) \setminus U$ have maximal distance from~$U$ in~$G$.
    Let~$P = p_0 \dots p_n$ be a shortest~$p_0$--$U$ path in~$G$.
    Given~$r \in \N$, we write \mbox{$B_i := B_G(p_i, r)$} for every $p_i \in P$. 
    Suppose that some component~$C$ of~$G - \bigcup_{i = 0}^n B_i$ is disjoint from~$U$ and satisfies \mbox{$N_G(C) \subseteq \bigcup_{i = 0}^\ell B_i$} for some~$\ell \le n$.
    Then~$\dist_G(v, P) \leq 2r + \ell$ for every~$v \in C$.
\end{lemma}

\begin{proof}
	Let $v$ be an arbitrary vertex of $C$, and let $Q$ be a shortest $v$--$U$ path in $G$ with endvertex $u \in U$.
	As $V(C) \cap U = \emptyset$, the path $Q$ intersects $\bigcup_{i = 0}^n B_i$.
	Let $q$ be the first vertex on $Q$ in $\bigcup_{i = 0}^n B_i$, and let $j$ be the smallest index of a ball~$B_j$ with $q \in B_j$.
	Note that $j \leq \ell$ since~$N_G(C) \subseteq \bigcup_{i = 0}^\ell B_i$.
	
	Since $P$ is a longest geodesic path in $G$ which ends in $U$, we have that
	$n = ||P|| \ge ||Q|| = \dist_G(v, u) = \dist_G(v, q) + \dist_G(q, u)$, where the last equality follows from~$Q$ being geodesic in $G$.
    Additionally, since~$p_n \in U$, we have $\dist_G(p_j, p_n) \leq \dist_G(p_j, u) \leq \dist_G(p_j, q) + \dist_G(q, u) = r + \dist_G(q, u)$.
    These two inequalities combine to
    \begin{equation*}
        \dist_G(v, q) \leq n - \dist_G(q, u) \leq n - (\dist_G(p_j, p_n) - r).
    \end{equation*}
	All in all, we get
	\begin{equation*}
	    \dist_G(v, P) \leq \dist_G(v, p_j) \leq \dist_G(v, q) + r \leq n - (\dist_G(p_j, p_n) - r) + r = n - (n - j) + 2r \leq 2r + \ell,
	\end{equation*}
    where the last inequality follows from~$j \leq \ell$.
    This concludes the proof of the claim.
\end{proof}

With all the previous lemmas at hand, we are now ready to prove~\cref{thm:RadialPathWith:Technical}. 
\begin{proof}[Proof of \cref{thm:RadialPathWith:Technical}]
	Let $G$ be a connected graph that contains neither $\subdivk{K_{1,3}}{3k}$ as a~$3$-quasi-geodesic subgraph nor~$\subdivk{K^3}{4k+1}$ as a geodesic subgraph. 	
	Let $P = p_0 \dots p_n$ be a longest geodesic path in~$G$. We may assume $n \geq 18k + 3$; otherwise we are done.
    	For every $0 \leq i  \leq n$, define $B_i := B_G(p_i, 3k)$ as the ball in~$G$ of radius $3k$ around $p_i$. 		
	If~$G = G_P$, then we are done.
    Otherwise, we consider the components of $G - G_P$. Since $G$ is connected by assumption, each component~$C$ of~$G - G_P$ has a neighbour in at least one $B_i$.
    The following two claims show that in fact $N_G(C) \subseteq \bigcup_{i=0}^{3k} B_i$ or $N_G(C) \subseteq \bigcup_{i=n-3k}^n B_i$.

            	\begin{claim} \label{path:AttachmentsInTheMiddle}
            No component $C$ of $G - G_P$ has a neighbour in $B_i$ with $3k < i < n - 3k$.
	\end{claim}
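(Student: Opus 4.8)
The plan is to argue by contradiction. Suppose some component $C$ of $G - G_P$ has a neighbour in $B_i$ with $3k < i < n - 3k$, and fix $w \in B_i$ together with $v \in C$ such that $vw \in E(G)$. I will produce a $\subdivk{K_{1,3}}{3k}$ that is $3$-quasi-geodesic in $G$, contradicting the assumption that $G$ contains no such subgraph. The three spokes of the claw will be the two halves $p_i \dots p_0$ and $p_i \dots p_n$ of the geodesic path $P$ together with a shortest $p_i$--$V(C)$ path, with $p_i$ as the branch vertex.

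First I would record the metric facts needed. Since $V(C)$ is disjoint from $V(G_P) = \bigcup_{j = 0}^{n} B_j$, every vertex of $C$ lies outside $B_i = B_G(p_i, 3k)$ and hence has distance at least $3k + 1$ from $p_i$; in fact from every $p_j$. Conversely, $\dist_G(p_i, V(C)) \le \dist_G(p_i, w) + 1 \le 3k + 1$, so $\dist_G(p_i, V(C)) = 3k + 1$. Fix a shortest $p_i$--$V(C)$ path $T = p_i x_1 \dots x_{3k} x_{3k+1}$; it is a geodesic of $G$ of length $3k + 1$ whose only vertex in $C$ is $x_{3k+1}$. The key point is that $T$ meets $P$ only in $p_i$: if $x_j = p_\ell$ for some $j \ge 1$ and some $\ell$, then $x_j \dots x_{3k+1}$ would be a $p_\ell$--$V(C)$ path of length $3k + 1 - j < 3k + 1$, contradicting that $p_\ell$ too has distance at least $3k + 1$ from $V(C)$. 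Since $i \ge 3k + 1$ and $n - i \ge 3k + 1$, the graph $X := P \cup T$ is therefore a subdivided claw with branch vertex $p_i$ and leaves $p_0, p_n, x_{3k+1}$, all three of whose spokes have length at least $3k + 1$; that is, $X$ is a $\subdivk{K_{1,3}}{3k}$.

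It remains to verify that $X$ is $3$-quasi-geodesic in $G$, i.e.\ that $\dist_X(u, u') \le 3\dist_G(u, u')$ for all $u, u' \in V(X)$. If $u$ and $u'$ both lie on $P$, or both lie on $T$, one even has $\dist_X(u, u') = \dist_G(u, u')$, since $P$ and $T$ are geodesics of $G$ contained in $X$. The only case requiring work is $u = x_j \in T \setminus \{p_i\}$ and $u' = p_\ell \in P$, for which $\dist_X(x_j, p_\ell) \le j + |i - \ell|$ because $T$ and $P$ meet only in $p_i$. To control this I would establish two lower bounds on $\dist_G(x_j, p_\ell)$: first, $\dist_G(x_j, p_\ell) \ge j$, using that $\dist_G(x_j, V(C)) = 3k + 1 - j$ (which follows from the subpath $x_j T x_{3k+1}$ and from $\dist_G(p_i, V(C)) = 3k + 1$) together with $\dist_G(p_\ell, V(C)) \ge 3k + 1$ and the triangle inequality $\dist_G(p_\ell, V(C)) \le \dist_G(p_\ell, x_j) + \dist_G(x_j, V(C))$; second, $\dist_G(x_j, p_\ell) \ge |i - \ell| - j$, by the triangle inequality through $p_i$. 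A short case split on whether $|i - \ell| \le 2j$ then yields $j + |i - \ell| \le 3\dist_G(x_j, p_\ell)$ in either case, which completes the proof.

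The step I expect to be the crux is securing the internal disjointness of the third spoke $T$ from $P$ at no cost --- this is precisely what makes $X$ honestly a $\subdivk{K_{1,3}}{3k}$ rather than merely a graph containing one with extra intersections --- together with the quasi-geodesic check in the mixed case, where neither lower bound on $\dist_G(x_j, p_\ell)$ is sufficient on its own and the two must be balanced via the case distinction.
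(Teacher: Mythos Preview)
Your proof is correct and follows essentially the same approach as the paper: both take a shortest path of length $3k+1$ from $p_i$ to $C$, observe it meets $P$ only in $p_i$, and conclude that $P$ together with this path is a $3$-quasi-geodesic $\subdivk{K_{1,3}}{3k}$. The only difference is that the paper packages the $3$-quasi-geodesic verification into a prior lemma (\cref{lem:CombiningGeodesicGraphs}: attaching a shortest $v$--$X$ path to a $c$-quasi-geodesic subgraph $X$ gives a $(2c+1)$-quasi-geodesic subgraph), whereas you redo this special case by hand via your two lower bounds and the case split on $|i-\ell|\le 2j$; your lower bound~1, $\dist_G(x_j,p_\ell)\ge j$, is exactly the key inequality in that lemma's proof, just derived via distances to $V(C)$ rather than to $P$.
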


	\begin{claimproof}
		Suppose for a contradiction that there is such a component $C$.
        We show that $G$ then contains a~$\subdivk{K_{1,3}}{3k}$ as $3$-quasi-geodesic subgraph, which contradicts our assumption on $G$.
        
		Let $v \in C$ be a vertex with a neighbour in $B_i$ with~$3k < i < n-3k$, and let $Q$ be a shortest $v$--$p_i$ path in~$G$.
		The choice of~$v$ guarantees that the path $Q$ has length exactly $3k+1$, and thus $Q$ is a shortest $v$--$P$~path in $G$ as every vertex in $G - G_P$ and hence in $C$ has distance at least $3k+1$ to $P$.
		By \cref{lem:CombiningGeodesicGraphs}, $P \cup Q$ is a $3$-quasi-geodesic subgraph of $G$.
		It follows from $3k < i < n - 3k$ that $P \cup Q$ is a $\subdivk{K_{1,3}}{3k}$, as desired.
	\end{claimproof}
	
	\begin{claim}\label{path:AttachmentsOnBothSides}
		No component $C$ of $G - G_P$ has at least one neighbour in $\bigcup_{i=0}^{3k} B_i$, at least one neighbour in $\bigcup_{i=n-3k}^n B_i$ and no neighbours in any $B_i$ with $3k < i < n-3k$.
	\end{claim}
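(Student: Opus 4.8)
The plan is to assume such a component $C$ exists and derive a contradiction by exhibiting a geodesic $\subdivk{K^3}{4k+1}$ in $G$. The tool for this is \cref{lem:FindingALongGeodesicCycle}, which is tailor-made for exactly this ``banana'' configuration: a component of $G$ minus a band of balls around a geodesic path that attaches only near the two ends of the path.

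First I would observe that $G - G_P = G - \bigcup_{i=0}^{n} B_i$, so $C$ is a component of $G - \bigcup_{i=0}^{n} B_i$ in the sense of \cref{lem:FindingALongGeodesicCycle}. Then I would apply \cref{lem:FindingALongGeodesicCycle} to the geodesic path $P = p_0 \dots p_n$ with parameters $r := 3k$ and $m_0 := m_1 := 3k$. Its hypotheses ask that $C$ has a neighbour in $\bigcup_{i=0}^{3k} B_i$ and in $\bigcup_{i=n-3k}^{n} B_i$ but none in $\bigcup_{i=3k+1}^{n-3k-1} B_i$, which is precisely the situation assumed in the claim, and that $m := n - m_0 - m_1 - 2r = n - 12k$ is positive. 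Since we may assume $n \ge 18k+3$ (as noted at the start of the proof of \cref{thm:pathWidth}), we get $m \ge 6k+3 > 0$, so \cref{lem:FindingALongGeodesicCycle} yields a geodesic cycle $O$ in $G$ of length at least $2m \ge 12k+6$.

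To finish, I would reinterpret $O$ as a forbidden subgraph. Writing $L := \|O\| \ge 12k+6 = 3(4k+2)$, choose three vertices of $O$ splitting it into three arcs of lengths $a, b, c$ with $a + b + c = L$ and each at least $4k+2$ — for instance $a = b = 4k+2$ and $c = L - 8k - 4 \ge 4k+2$. With these three vertices designated as branch vertices, $O$ is, as a subgraph of $G$, a $\subdivk{K^3}{4k+1}$: the triangle $K^3$ has three edges, and here each is replaced by a path of length at least $4k+2 = (4k+1)+1$. Since $O$ is geodesic in $G$, this $\subdivk{K^3}{4k+1}$ is a geodesic subgraph of $G$, contradicting the hypothesis of \cref{thm:pathWidth}; this contradiction proves the claim.

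The only substantive step is the invocation of \cref{lem:FindingALongGeodesicCycle}; everything else is bookkeeping. The point to be careful about is the arithmetic matching: the claim's three neighbourhood conditions must line up with the hypotheses of \cref{lem:FindingALongGeodesicCycle} for the choice $m_0 = m_1 = r = 3k$, and the bound $n \ge 18k+3$ must be exactly what makes $2m \ge 3(4k+2)$, so that the geodesic cycle produced is long enough to carry a $\subdivk{K^3}{4k+1}$. I do not foresee a genuine obstacle beyond this.
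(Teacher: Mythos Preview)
Your proposal is correct and follows essentially the same approach as the paper: assume such a component exists, apply \cref{lem:FindingALongGeodesicCycle} with $m_0=m_1=r=3k$, and use $n\ge 18k+3$ to obtain a geodesic cycle of length at least $12k+6$, contradicting the absence of a geodesic $\subdivk{K^3}{4k+1}$. The paper leaves the final step (that a geodesic cycle of length $\ge 12k+6$ is a geodesic $\subdivk{K^3}{4k+1}$) implicit, whereas you spell out the arc-splitting explicitly; otherwise the arguments are identical.
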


	\begin{claimproof}
        Suppose for a contradiction that there is such a component $C$.
        Applying~\cref{lem:FindingALongGeodesicCycle} to~$C$ with~$m_0 = m_1 = r = 3k$, we find that $G$ contains a geodesic cycle~$O$ of length at least~$2 (n - 12k)$.
        But~$n \geq 18 k + 3$ as we assumed the graph to have a large radius, so~$O$ has length at least~$2 (18k + 3 - 12k) = 12k + 6$.
        Thus, $O$ is a geodesic cycle in~$G$ of length at least $12k + 6$, a contradiction to our assumptions on~$G$.
	\end{claimproof}

	Thus, each component of $G - G_P$ attaches either only to balls~$B_i$ with~$i \le 3k$ or to balls~$B_i$ with~$i \ge n - 3k$.
	But the vertices in such components have bounded distance from $P$ in $G$, as the following claim shows.

	\begin{claim}\label{path:AttachmentsOnOneSide}
		Let $C$ be a component of $G - G_P$ such that $N_G(C) \subseteq \bigcup_{i=0}^{3k} B_i$ or $N_G(C) \subseteq \bigcup_{i=n-3k}^n B_i$.
		Then every vertex $v \in C$ has distance at most $9 k$ from $P$ in $G$.
	\end{claim}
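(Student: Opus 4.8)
The plan is to deduce the claim directly from \cref{lem:ComponentsAttachingToTheEndOfP}, which was set up precisely for this situation; essentially all that remains is to recognise the longest geodesic path $P$ together with one of its endpoints as a valid instance of the data that lemma requires.

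First I would reduce to a single case. Reversing the enumeration $p_0,\dots,p_n$ of $P$ interchanges the two alternatives $N_G(C)\subseteq\bigcup_{i=0}^{3k}B_i$ and $N_G(C)\subseteq\bigcup_{i=n-3k}^{n}B_i$ while leaving $G$, $C$ and $\dist_G(\,\cdot\,,P)$ unchanged, so it suffices to treat the first one. Set $U:=\{p_n\}$. Since $P$ is a geodesic path, $\|P\|=n=\dist_G(p_0,p_n)$, so $P$ is a shortest $p_0$--$U$ path in $G$. The key point is that $p_0$ has maximal distance from $U$ in $G$: if some vertex $w$ had $\dist_G(w,p_n)>n$, then a shortest $w$--$p_n$ path would be a geodesic path in $G$ of length greater than $n=\|P\|$, contradicting the choice of $P$ as a longest geodesic path. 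Also $p_0\ne p_n$ because $n\ge 18k+3>0$.

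It then remains only to verify the remaining hypotheses of \cref{lem:ComponentsAttachingToTheEndOfP} for $r=3k$ and $\ell=3k$. We have $V(G_P)=\bigcup_{i=0}^{n}B_i$, and $p_n\in B_n\subseteq V(G_P)$, so the component $C$ of $G-G_P$ is a component of $G-\bigcup_{i=0}^nB_i$ that is disjoint from $U$; by hypothesis $N_G(C)\subseteq\bigcup_{i=0}^{3k}B_i$. Applying \cref{lem:ComponentsAttachingToTheEndOfP} yields $\dist_G(v,P)\le 2r+\ell=2\cdot 3k+3k=9k$ for every $v\in C$, which is exactly the claimed bound.

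I do not anticipate any real obstacle here: the argument is a straightforward bookkeeping of parameters. The only mild subtlety — and the one place where the specific choice of $P$ (as opposed to an arbitrary geodesic path) is used — is confirming that the endpoint $p_n$ legitimately plays the role of the target set $U$ in \cref{lem:ComponentsAttachingToTheEndOfP}, i.e.\ that $p_0$ is at maximal distance from $U$; this is immediate from $P$ being a \emph{longest} geodesic path.
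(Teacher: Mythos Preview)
The proposal is correct and takes exactly the same approach as the paper: apply \cref{lem:ComponentsAttachingToTheEndOfP} with $U=\{p_n\}$ (respectively $U=\{p_0\}$), $r=3k$ and $\ell=3k$. Your write-up is more explicit than the paper's in verifying the hypotheses of that lemma --- in particular the observation that $p_0$ has maximal distance from $\{p_n\}$ because $P$ is a longest geodesic path --- but this is precisely what the paper intends when it says ``the claim follows by applying \cref{lem:ComponentsAttachingToTheEndOfP}''.
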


	\begin{claimproof}
        If~$N_G(C) \subseteq \bigcup_{i=0}^{3k} B_i$, then the claim follows by applying \cref{lem:ComponentsAttachingToTheEndOfP} with $U = \{p_n\}$.
        Otherwise, $N_G(C) \subseteq \bigcup_{i=n-3k}^n B_i$, and the claim follows by applying \cref{lem:ComponentsAttachingToTheEndOfP} with $U = \{p_0\}$.
	\end{claimproof}

	By \cref{path:AttachmentsInTheMiddle,path:AttachmentsOnOneSide,path:AttachmentsOnBothSides}, every vertex of $G$ has distance at most $9k$ from $P$, as desired.
    \end{proof}

\section{Radial cycle-width} \label{sec:CycleWidth}

In this section we build on our result on~\radialHWidth{path}, \cref{thm:pathWidth}, and use a small adaptation of its proof  to address~\radialHWidth{cycle} by proving~\cref{thm:cycle-width-intro}.
More precisely, \cref{thm:cycle-width-intro} follows from the lemmas that we have already shown in~\cref{sec:PathWidth}.
Let us restate~\cref{thm:cycle-width-intro} here for convenience. 
\begin{customthm}{\cref*{thm:cycle-width-intro}} \label{thm:cycleWidth}
    Let~$k \in \N$.
    If a connected graph~$G$ contains no~$\subdivk{K_{1,3}}{3k}$ as a~$3$-quasi-geodesic subgraph, then~$G$ admits an honest decomposition modelled on a path or cycle $C$ of radial width at most~$18k+2$ and radial spread at most~$36k+2$.
    
    Moreover, $C$ is $(1, 18k+2)$-quasi-isometric to $G$.
\end{customthm}

\noindent We remark that we did not optimise the bound on the radial width and radial spread.

In fact, we show the following stronger statement, which immediately implies \cref{thm:cycleWidth}.

\begin{theorem} \label{thm:RadialCycleWidth:Technical}
    Let $k \in \N$, and let $G$ be a connected graph. If $G$ contains no~$\subdivk{K_{1,3}}{3k}$ as a~$3$-quasi-geodesic subgraph, then there exists a geodesic cycle or path~$C$ in $G$ such that $C$ is either a path of length at most $18k+2$ or every vertex of $G$ has distance at most $9k$ from $C$. 
\end{theorem}

Let us first show that \cref{thm:RadialCycleWidth:Technical} implies \cref{thm:cycleWidth}.

\begin{proof}[Proof of \cref{thm:cycleWidth} given \cref{thm:RadialCycleWidth:Technical}]
    Let $G$ be a connected graph that contains no~$\subdivk{K_{1,3}}{3k}$ as a~$3$-quasi-geodesic subgraph, and let~$C'$ be given by applying \cref{thm:RadialCycleWidth:Technical} to~$G$. 
    If $C'$ is a path of length at most $18k+2$, then $G$ has radius at most $18k+2$. Let $C$ be the trivial path on a single vertex $c$. Then $C$ is $(1, 18k+2)$-quasi-isometric to $G$, and $(C, (V_c))$ with $V_c := V(G)$ is an honest decomposition of $G$ of radial width at most $18k+2$ and radial spread $0$.

    So we may assume that every vertex of $G$ has distance at most $9k$ from $C$. Set $C := C'$, and apply \cref{lem:UnionOfBallIsDecomp} to $H := C$ and $r := 9k$. This yields an honest $C$-decomposition of $G' := G[B_G(C, 9k)]$ of radial width at most $18k + 1$ and radial spread at most $36k + 2$. Since every vertex of $G$ has distance at most $9k$ from~$C$, we have $G' = G$, and hence this is the desired decomposition of~$G$.
\end{proof}

Let us now prove \cref{thm:RadialCycleWidth:Technical}.

\begin{proof}[Proof of \cref{thm:RadialCycleWidth:Technical}]
	Let~$G$ be a connected graph that contains no $\subdivk{K_{1,3}}{3k}$ as a $3$-quasi-geodesic subgraph. 	Applying \cref{thm:RadialPathWith:Technical} to $G$ yields that $G$ either contains a geodesic path $P$ such that $C := P$ is as desired, or $G$ contains a $\subdivk{K^3}{4k+1}$ as a geodesic subgraph.
	    Since we are done in the first case, we may thus assume that~$G$ contains a $\subdivk{K^3}{4k+1}$ as a geodesic subgraph, which means that there exists a geodesic cycle~$C$ in~$G$ of length at least $3\cdot(4k + 2) = 12k + 6$.
    Set $G_C := G[B_G(C, 3k)]$. If $G = G_C$, then we are done.

		    Otherwise, there exists a neighbour~$v$ of~$V(G_C)$ in~$G$ since $G$ is connected.
	Let $P$ be a shortest $v$--$C$ path in $G$, and let $u$ be its endvertex in $C$.
	Since $v \in N_G(G_C)$, it has distance $3k + 1$ from $C$, and thus $P$ has length $3k+1$.
	Let $Q := B_C(u, 3k + 1)$ be the subpath of~$C$ of length $6k+2$ which contains~$u$ as its `middle' vertex.
	Note that~$Q$ is actually a path as~$C$ has length at least $12k + 6 > 6k + 2$.
	Since $P$ is a path from $v$ to $C$ which ends in~$u$, the paths~$P$ and $Q$ only meet in $u$.
	Hence, $P \cup Q$ is a $\subdivk{K_{1,3}}{3k}$ in $G$.
	
	To conclude the proof and obtain the desired contradiction to our assumptions on $G$, it remains to show that $P \cup Q$ is $3$-quasi-geodesic in $G$.
	Since $Q$ is a subpath of $C$ with $||Q|| = 6k + 2 \leq \frac{1}{2}(12k + 6) \leq \frac{1}{2}||C||$, we have $\dist_Q(u,v) = \dist_C(u,v)$ for every two vertices $u,v\in Q$.
    Thus, $Q$ is geodesic in~$G$, since $C$ is geodesic in $G$.
	By \cref{lem:CombiningGeodesicGraphs}, $P \cup Q$ then is a $3$-quasi-geodesic subgraph of $G$, as desired.
\end{proof}

\section{Radial star-width} \label{sec:StarWidth}

In this section we prove \cref{thm:star-width-intro}, which we restate here for convenience. 
\begin{customthm}{\cref*{thm:star-width-intro}} \label{thm:starWidth}
	Let~$k \in \N$.
    If a connected graph~$G$ contains no~$\subdivk{K^3}{k}$ as a geodesic subgraph and no~$\subdivk{W}{3k}$ as a~$3$-quasi-geodesic subgraph, then $G$ admits an honest decomposition modelled on a subdivided star of radial width at most~$58k + 9$ and radial spread at most $30k+7$. 
    Moreover, there exists some $C_k \in \N$ such that some subdivided star is $(1, C_k)$-quasi-isometric to $G$.
\end{customthm}

\noindent Recall that $W$ is the {\em wrench\/} graph depicted in \cref{fig:wrench}. As we already mentioned in the introduction, one can check by carefully reading the proof that the subdivided star which we construct for the first part of the statement already satisfies the `moreover'-part, and that we may choose $C_k = 60k+14$ 
(see the paragraph after the proof of \cref{thm:starWidth} for details).
\medskip

Before we start with the proof of \cref{thm:starWidth}, we first show an auxiliary lemma. Recall that in the proof of \cref{thm:pathWidth} we used \cref{lem:FindingALongGeodesicCycle} as a tool to show that a graph contains a geodesic $\subdivk{K^3}{k}$.
Similarly, we will use the following lemma in the proof of \cref{thm:starWidth} to show that a graph contains a $3$-quasi-geodesic~$\subdivk{W}{3k}$.

\begin{lemma} \label{lem:FindingA3geodesicW}
	Let~$P = p_0 \dots p_n$ be a geodesic path in a graph~$G$.
	Given two vertices~$u, v \in G$, let~$Q$ be a shortest~$u$--$P$ path in~$G$ with endvertex~$q \in P$, and let~$R$ be a shortest~$v$--$(P \cup Q)$ path in~$G$ with endvertex~$r \in P \cup Q$.
	Suppose that at least one of the following two conditions hold:
	\begin{enumerate}[label=(\roman*)]
		\item \label{CaseP} $r \in P$ and~$\dist_G(q, r) \geq 4\max\{\dist_G(u, P), \dist_G(v, P)\}$;
		\item \label{CaseQ} $r \in Q$ and~$\dist_G(q, r) \geq 4\max\{\dist_G(v, Q), \dist_G(p_0, q), \dist_G(p_n, q)\}$.
	\end{enumerate}
	Then $P \cup Q \cup R$ is a $3$-quasi-geodesic subgraph of $G$.
\end{lemma}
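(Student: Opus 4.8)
The plan is to exploit that $H := P \cup Q \cup R$ is a tree. Since $Q$ is a shortest $u$--$P$ path it is geodesic in $G$ and meets $P$ only in $q$, and since $R$ is a shortest $v$--$(P\cup Q)$ path it is geodesic in $G$ and meets $P \cup Q$ only in $r$; as $P$ is geodesic too, $H$ is thus obtained from the path $P$ by attaching the path $Q$ as a pendant at $q$ and then the path $R$ as a pendant at $r$, so distances in $H$ are realised by its unique paths. First I would record two easy facts: by \cref{lem:CombiningGeodesicGraphs}, applied with $X = P$ (which is $1$-quasi-geodesic) and the path $Q$, the subgraph $P \cup Q$ is $3$-quasi-geodesic in $G$; and since $R$ is attached to $P \cup Q$ only at $r$, the distance functions $\dist_H$ and $\dist_{P\cup Q}$ agree on $V(P \cup Q)$. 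Consequently the cases where $x,y$ both lie on $P \cup Q$, or both lie on $R$ (where $R$ is geodesic), are immediate, and it remains to bound $\dist_H(x,y)$ for $x \in V(R) \setminus \{r\}$ and $y \in V(P \cup Q)$.

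For such $x$ and $y$, the $x$--$y$ path of $H$ runs from $x$ along $R$ to $r$ and then through $P \cup Q$ to $y$, so $\dist_H(x,y) = \dist_G(x,r) + \dist_{P\cup Q}(r,y)$. Since $xRr$ is a shortest $x$--$(P\cup Q)$ path, we have $\dist_G(x,r) \le \dist_G(x,y)$ and $\dist_G(x,r) \le \|R\|$, and these two bounds will carry most of the argument. Next I would split on the position of $y$ relative to $r$. If $y$ lies on the same arm of $P \cup Q$ as $r$ -- that is, $y \in V(P)$ when $r \in P$, or $y \in V(Q)$ when $r \in Q$, which in particular covers $r = q$ and $y = q$ -- then $\dist_{P\cup Q}(r,y) = \dist_G(r,y)$ because that arm is geodesic, and the triangle inequality gives
\begin{equation*}
\dist_H(x,y) = \dist_G(x,r) + \dist_G(r,y) \le 2\dist_G(x,r) + \dist_G(x,y) \le 3\dist_G(x,y),
\end{equation*}
with no use of the hypotheses. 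Otherwise the $r$--$y$ path of $P \cup Q$ runs through $q$, so $\dist_{P\cup Q}(r,y) = \dist_G(r,q) + \dist_G(q,y)$; here $r \ne q$, so $r$ lies on exactly one of $P$ and $Q$, and correspondingly exactly one of the two hypothesised conditions applies -- and it is the one that does. Write $D := \dist_G(q,r)$. The applicable condition bounds both $\dist_G(q,y)$ and $\|R\|$ by $D/4$: when $r \in P$ one has $\|R\| = \dist_G(v,P)$ and $\dist_G(q,y) \le \|Q\| = \dist_G(u,P)$, and condition~(i) reads $D \ge 4\max\{\dist_G(u,P),\dist_G(v,P)\}$; when $r \in Q$ one has $\|R\| = \dist_G(v,Q)$ and $\dist_G(q,y) \le \max\{\dist_G(p_0,q),\dist_G(p_n,q)\}$ (as $P$ is geodesic), and condition~(ii) reads $D \ge 4\max\{\dist_G(v,Q),\dist_G(p_0,q),\dist_G(p_n,q)\}$. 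In either case, $\dist_G(x,r) \le \|R\| \le D/4$ gives $\dist_H(x,y) \le D/4 + (D + D/4) = \tfrac{3}{2}D$, while
\begin{equation*}
\dist_G(x,y) \ge \dist_G(r,y) - \dist_G(x,r) \ge \bigl(D - \dist_G(q,y)\bigr) - \dist_G(x,r) \ge D - \tfrac{1}{4} D - \tfrac{1}{4} D = \tfrac{1}{2} D,
\end{equation*}
and therefore $\dist_H(x,y) \le \tfrac{3}{2}D \le 3\dist_G(x,y)$, as required.

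The only points needing care are the two identities $\|R\| = \dist_G(v,P)$ for $r \in P$ and $\|R\| = \dist_G(v,Q)$ for $r \in Q$ (each immediate from $\|R\| = \dist_G(v, P\cup Q)$ together with $r$ being the endvertex of $R$ on the relevant subgraph), the bound $\dist_G(q,y) \le \max\{\dist_G(p_0,q),\dist_G(p_n,q)\}$ for $y \in V(P)$ (immediate since $P$ is geodesic), and matching $\dist_G(q,y)$ to the correct term inside the maximum in condition~(i) respectively~(ii). Degenerate cases ($Q$ or $R$ trivial, or $r = q$) all fall under the cases already treated, with the hypotheses then forcing the remaining data to collapse. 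I expect the genuinely structural step -- and the only place conditions~(i) and~(ii) enter -- to be the case split according to whether $y$ lies on the same arm of $P \cup Q$ as $r$, or the $r$--$y$ path of $P \cup Q$ must detour through $q$: the former needs no hypothesis, and the latter is exactly what the lower bound $4\max\{\cdots\}$ on $\dist_G(q,r)$ is tailored to control.
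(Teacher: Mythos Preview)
Your argument is correct and follows essentially the same route as the paper: both use \cref{lem:CombiningGeodesicGraphs} to dispose of pairs lying in a single ``arm'' of the tree $P\cup Q\cup R$, and then handle the remaining pair by lower-bounding $\dist_G(x,y)$ via the triangle inequality against $D=\dist_G(q,r)$ and the hypothesised bound $D \ge 4\max\{\cdots\}$. The only organisational difference is that the paper applies \cref{lem:CombiningGeodesicGraphs} a second time (to $P\cup R$ when $r\in P$, respectively to $Q\cup R$ when $r\in Q$, which works because $R$ is then also a shortest $v$--$P$ resp.\ $v$--$Q$ path), thereby reducing immediately to the single ``cross'' case $x\in Q,\,y\in R$ (resp.\ $x\in P,\,y\in R$); you instead handle the ``same-arm'' case by a direct two-line triangle-inequality argument, which is exactly the content of that second lemma application unpacked.
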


\begin{proof}
	We first consider the case that $r \in P$.
	By \cref{lem:CombiningGeodesicGraphs}, we have that $P \cup Q$  and $P \cup R$ are each $3$-quasi-geodesic subgraphs of $G$.
	Hence, in order to show that $X := P \cup Q \cup R$ is $3$-quasi-geodesic, it suffices to consider vertices $x, y \in X$ with $x \in Q$ and $y \in R$.
	
	Since $P$ is geodesic, we have that $\dist_X(q, r) = \dist_G(q, r) \leq \dist_G(q, x) + \dist_G(x, y) + \dist_G(y, r)$ and thus 
	\begin{align*}
		\dist_G(x, y) &\geq \dist_G(q, r) - \dist_G(x, q) - \dist_G(y, r) \geq 4\max\{\dist_G(u, P), \dist_G(v, P)\} - \dist_G(u, P) - \dist_G(v, P)\\
		&\geq 2\max\{\dist_G(u, P), \dist_G(v, P)\} \geq 2\max\{\dist_G(x, q), \dist_G(y, r)\}.
	\end{align*}
	Therefore, making again use of $\dist_G(q, r) \leq \dist_G(q, x) + \dist_G(x, y) + \dist_G(y, r)$, we obtain
	\begin{align*}
		\dist_X(x,y) &= \dist_X(x, q) + \dist_X(q,r) + \dist_X(r, y) = \dist_G(x, q) + \dist_G(q, r) + \dist_G(r, y)\\
		&\leq 2\dist_G(x, q) + 2\dist_G(y, r) + \dist_G(x, y) \le 3\dist_G(x, y).
	\end{align*}
	
	For the second case, assume that $r \in Q$.
    Similar as before, we find that $P \cup Q$ and $Q \cup R$ are each $3$-quasi-geodesic subgraphs of $G$.
	Hence, we only need to consider vertices $x, y \in X$ with $x \in P$ and $y \in R$.
    As before, we find that 
	\[
	\dist_G(x, y) \geq \dist_G(q, r) - \dist_G(x, q) - \dist_G(y, r) \geq 2\max\{\dist_G(v, Q), \dist_G(p_0, q), \dist_G(p_n, q)\},
	\]
	and thus 
	\[
	\dist_X(x, y) = \dist_G(x, q) + \dist_G(q, r) + \dist_G(r, y) \leq 2\dist_G(x, q) + 2\dist_G(y, r) + \dist_G(x, y) \leq 3\dist_G(x, y).\qedhere
	\]
\end{proof}

Now we prove \cref{thm:starWidth}.
We remark that we did not optimise the bounds on the radial width and radial spread.

\begin{proof}[Proof of \cref{thm:starWidth}]
	Let~$G$ be a graph that contains neither $\subdivk{K^3}{k}$ as a geodesic subgraph nor $\subdivk{W}{3k}$ as a $3$-quasi-geodesic subgraph.
	We will construct a subdivided star $S$ and an $S$-decomposition of~$G$ of radial width at most $58k + 9$ and radial spread at most $58k+9$.
    The `moreover' part of the statement then follows: 
    By \cref{lem:GraphDecToQuasiIso,lem:InverseQI}, there exists an $(L, C)$-quasi-isometry from $G$ to $S$ such that $L$ and $C$ depend on~$k$ only. Applying \cite{nguyenAsymptoticStructureII2025}*{1.3} (to $L, C, 2$)\footnote{Note that we use here that subdivided stars admit path-decompositions into bags of size~$3$ and thus have path-width $2$.} then yields a constant $C'_k$ such that $G$ is $(1, C'_k)$-quasi-isometric to some subdivided star $S'$. By \cref{lem:InverseQI}, it then follows that $S'$ is $(1, C_k)$-quasi-isometric to~$G$ for $C_k := 3C'_k$.
    	    \medskip
	
		Let $P = p_0 \ldots p_n$ be a longest geodesic path in $G$.
	Observe that we may assume $n \geq 58k + 10$; otherwise,~$G$ has a trivial $\cH$-decomposition into a single ball of radius at most $58k + 9$.
	For every $0 \leq i \leq n$ define $B_i$ as the ball in~$G$ of radius $3k$ around~$p_i$, and let $G_P:= G[\bigcup_{i = 0}^n B_i]$.
	
	In the following claims we will analyse how the components of $G - G_P$ attach to $G_P$.
    Note that, as we assumed $G$ to be connected, every component of $G - G_P$ has some neighbour in some $B_i$.
	First we show that we may assume that some component of $G-G_P$ attaches to $G_P$ somewhere in the middle of $P$.
	
	\begin{claim} \label{starwidthclaim1}
		Either $G$ has an honest $P$-decomposition of radial width at most $58k + 9$ and radial spread at most $58k + 9$ or some vertex of $G-G_P$ has a neighbour in some $B_s$ with $23k + 5 \leq s \leq n - 23k - 5$.
	\end{claim}
	
	\begin{claimproof}
		We assume for a contradiction that whenever a component of $G - G_P$ has a neighbour in some~$B_s$ then $s\leq 23k + 4$ or $n - 23k - 4 \leq s$.
		
		First we consider the case that some component~$C$ of $G - G_P$ has at least one neighbour in both $\bigcup_{i = 0}^{23k + 4} B_i$ and $\bigcup_{i = n - 23k - 4}^n B_i$.
        Let $m_0 = m_1 = 23k + 4$ and $r = 3k$.
        As
        \[
        n - m_0 - m_1 - 2r = n - (23k + 4) - (23k + 4) - 6k = n - 52k - 8 \geq 2k + 2 \geq 2,
        \]
        we can apply \cref{lem:FindingALongGeodesicCycle} to obtain a geodesic cycle in~$G$ of length at least $2\cdot(2k + 2) > 3k + 3$ which contradicts our assumption that $G$ does not contain a geodesic $\subdivk{K^3}{k}$.
		
		Thus, for every component $C$ of $G - G_P$, either $N_G(C) \subseteq \bigcup_{i = 0}^{23k + 4} B_i$ or $N_G(C) \subseteq \bigcup_{i = n - 23k - 4}^n B_i$ (this includes in particular the case that $G = G_P$).
		Then by \cref{lem:ComponentsAttachingToTheEndOfP} applied to $U = \{p_0\}$ or $U = \{p_n\}$, every vertex in $G - G_P$ has distance at most $2 \cdot 3k + 23k + 4 = 29k + 4$ to~$P$.
		Hence, by defining $B_i'=B_G(p_i,29k + 4)$, every vertex of $G$ is contained in some $B_i'$.
		So by \cref{lem:UnionOfBallIsDecomp} applied to the geodesic path $P$ and to $r= 29k + 4$ we obtain an honest $P$-decomposition of $G$ of radial width at most $2\cdot(29k + 4) + 1 = 58k + 9$ and radial spread at most $2\cdot(29k + 4) + 1 = 58k + 9$.
	\end{claimproof}
	
	By \cref{starwidthclaim1} (and since $P$ is a path and hence a subdivided star) we may assume that there is some $s$ with $23k + 5 \leq s \leq n - 23k - 5$ such that some vertex of $G - G_P$ has a neighbour in $B_s$. 
    We fix $s$ for the rest of the proof.
	The following two claims now show that every component of $G - G_P$ can only attach to~$G_P$ either close to the start of~$P$ or close to the end of~$P$ or close to~$p_s$.
	
	\begin{claim} \label{starwidthclaim2}
		If a vertex of $G - G_P$ has a neighbour in a ball $B_t$, then $t\leq 3k$ or $s - 12k - 3\leq t \leq s + 12k + 3$ or $n - 3k \leq t$.
	\end{claim}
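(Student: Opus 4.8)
The plan is to argue by contradiction using \cref{lem:FindingA3geodesicW}. Suppose some vertex $v\in G-G_P$ has a neighbour in a ball $B_t$ with $3k<t<n-3k$, and suppose $t<s-12k-3$ or $t>s+12k+3$, so that $|s-t|\ge 12k+4$ (as $s,t\in\N$); I will produce a $3$-quasi-geodesic $\subdivk{W}{3k}$ in $G$, contradicting the hypothesis on $G$. Also fix a vertex $w\in G-G_P$ with a neighbour in $B_s$, which exists by the choice of $s$. Since no vertex of $G-G_P$ lies in any $B_i$, both $v$ and $w$ have distance at least $3k+1$ from every $p_i$; combined with the neighbours in $B_t$ and $B_s$ this gives $\dist_G(v,P)=\dist_G(v,p_t)=3k+1$ and $\dist_G(w,P)=\dist_G(w,p_s)=3k+1$. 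Note $v\ne w$, since $v=w$ would yield $|s-t|=\dist_G(p_s,p_t)\le\dist_G(p_s,w)+\dist_G(w,p_t)\le 6k+2$.

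Next I would fix the three paths of the $\subdivk{W}{3k}$: the geodesic path $P=p_0\dots p_n$, a pendant path from $p_s$ to $w$, and a pendant path from $p_t$ to $v$. Let $Q$ be a shortest $w$--$p_s$ path in $G$; since $\dist_G(w,p_s)=3k+1=\dist_G(w,P)$, it is also a shortest $w$--$P$ path, so it meets $P$ only in $q:=p_s$, and every vertex of $Q$ has distance at most $3k+1$ from $p_s$. For the pendant at $p_t$, the key point is that the path to $v$ must land on $P$, in fact exactly at $p_t$: from $\dist_G(v,p_s)\ge\dist_G(p_s,p_t)-\dist_G(p_t,v)\ge|s-t|-(3k+1)\ge 9k+3>6k+2$, and since $Q$ lies in the ball of radius $3k+1$ around $p_s$, no shortest $v$--$(P\cup Q)$ path can end on $Q$; hence $\dist_G(v,P\cup Q)=\dist_G(v,P)=3k+1$, and we may take $R$ to be a shortest $v$--$p_t$ path, which is then internally disjoint from $P\cup Q$ and has endvertex $r:=p_t\in P$.

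Now \cref{lem:FindingA3geodesicW} applies with $u:=w$ and with $v,Q,q,R,r$ as above, since its condition \ref{CaseP} holds: $r=p_t\in P$ and $\dist_G(q,r)=\dist_G(p_s,p_t)=|s-t|\ge 12k+4=4(3k+1)=4\max\{\dist_G(w,P),\dist_G(v,P)\}$. Therefore $X:=P\cup Q\cup R$ is a $3$-quasi-geodesic subgraph of $G$. It remains to check that $X$ is a $\subdivk{W}{3k}$. Since $Q$ and $R$ are internally disjoint from $P$ and from each other and attached at the distinct interior vertices $p_s\ne p_t$ of $P$, the graph $X$ is the path $P$ with two pendant paths, so its only vertices of degree $3$ are $p_s$ and $p_t$ and its only leaves are $p_0,p_n,w,v$; this is the degree sequence of $W$. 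Finally, each of the five arcs of $X$ between consecutive branch vertices has length at least $3k+1$: the handle $p_s$--$p_t$ along $P$ has length $|s-t|\ge 12k+4$; the pendants $Q$ and $R$ have length $3k+1$; and the remaining two $P$-arcs have lengths $t$ and $n-s$ (if $t<s$) or $s$ and $n-t$ (if $t>s$), each at least $3k+1$ by $3k<t<n-3k$ and $23k+5\le s\le n-23k-5$. Hence $X$ is a $3$-quasi-geodesic $\subdivk{W}{3k}$ in $G$, the desired contradiction. The main thing to get right is the step in the second paragraph that pins $q$ down to $p_s$ and $r$ down to $p_t$ exactly (rather than merely within $O(k)$ of these): this is what makes $\dist_G(q,r)$ equal to $|s-t|$, so that the threshold $12k+3$ in the claim is precisely what \cref{lem:FindingA3geodesicW}\,\ref{CaseP} needs.
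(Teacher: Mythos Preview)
Your proof is correct and follows the same strategy as the paper: assume a bad index $t$ exists and apply \cref{lem:FindingA3geodesicW} to produce a $3$-quasi-geodesic $\subdivk{W}{3k}$. The paper applies the lemma to the truncated subpath $p_{t-3k-1}Pp_{s+3k+1}$ (in the case $t<s$), which makes the $\subdivk{W}{3k}$ shape immediate; you instead use the full path $P$ and then separately verify the arc-length conditions using $3k<t<n-3k$ and $23k+5\le s\le n-23k-5$. Both work. Your version is more explicit in pinning down $q=p_s$ and $r=p_t$ exactly and in checking the disjointness of $Q$ and $R$ from each other and from $P$, details the paper leaves to the reader.
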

	
	\begin{claimproof}
		We assume for a contradiction that there is some vertex $v$ of $G - G_P$ that has a neighbour in some~$B_t$ with $3k < t < s - 12k - 3$, the other case is symmetric. Let $R = r_0 \ldots r_{3k + 1} $ be a shortest path from $v=r_0$ to $p_t = r_{3k + 1}$ in $G$.
		Similarly, let $u$ be a vertex of $G - G_P$ that has a neighbour in $B_s$ and let $Q = q_0 q_1 \ldots q_{3k + 1}$ be a shortest path from $u=q_0$ to $p_s = q_{3k + 1}$ in $G$.
		Then $p_tPp_s$ has length at least~$12k + 4 = 4\cdot(3k+1)$, so by applying \cref{lem:FindingA3geodesicW} to $p_{t - 3k - 1}Pp_{s + 3k + 1}$, $Q$ and $R$ we obtain a $3$-quasi-geodesic~$\subdivk{W}{3k}$ in~$G$, a contradiction to our assumption on~$G$.
	\end{claimproof}
	
	\begin{claim}\label{starwidthclaim3}
		For every component $C$ of $G - G_P$, either $N_G(C) \subseteq \bigcup_{i = 0}^{3k} B_i$ or $N_G(C) \subseteq \bigcup_{i = s - 12k - 3}^{s + 12k + 3} B_i =: D_s$ or $N_G(C) \subseteq \bigcup_{i = n - 3k}^n B_i$.
	\end{claim}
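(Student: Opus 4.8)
The plan is to argue by contradiction. Suppose some component $C$ of $G - G_P$ has $N_G(C)$ contained in none of the three sets $Z_0 := \bigcup_{i=0}^{3k} B_i$, $D_s$, $Z_n := \bigcup_{i=n-3k}^{n} B_i$. By \cref{starwidthclaim2} we already know $N_G(C) \subseteq Z_0 \cup D_s \cup Z_n$, so $C$ has a neighbour in at least two of these three sets. I will then use \cref{lem:FindingALongGeodesicCycle} to produce a long geodesic cycle in $G$; cutting such a cycle into three arcs of length at least $k+1$ exhibits it as a geodesic $\subdivk{K^3}{k}$, which contradicts the hypothesis on $G$.

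To apply \cref{lem:FindingALongGeodesicCycle} to the path $P = p_0 \dots p_n$ with $r = 3k$, I pick the parameters $m_0, m_1$ according to whether $C$ has a neighbour in $Z_0$. If it does, then (since $N_G(C) \not\subseteq Z_0$) it also has a neighbour in $D_s \cup Z_n$, and I set $m_0 := 3k$ and $m_1 := n - s + 12k + 3$, so that $\bigcup_{i=0}^{m_0} B_i = Z_0$ while $\bigcup_{i=n-m_1}^{n} B_i = \bigcup_{i=s-12k-3}^{n} B_i$ contains both $D_s$ and $Z_n$. If $C$ has no neighbour in $Z_0$, then, meeting at least two of the three sets, it has neighbours in both $D_s$ and $Z_n$, and I set $m_0 := s + 12k + 3$ and $m_1 := 3k$, so that $\bigcup_{i=0}^{m_0} B_i \supseteq D_s$ while $\bigcup_{i=n-m_1}^{n} B_i = Z_n$. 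In either case $C$ has a neighbour in $\bigcup_{i=0}^{m_0} B_i$ and in $\bigcup_{i=n-m_1}^{n} B_i$, whereas \cref{starwidthclaim2} guarantees that $C$ has no neighbour in $\bigcup_{i=m_0+1}^{n-m_1-1} B_i$, because the index interval $[m_0+1, n-m_1-1]$ is disjoint from each of $[0,3k]$, $[s-12k-3,s+12k+3]$ and $[n-3k,n]$. Hence the hypotheses of \cref{lem:FindingALongGeodesicCycle} are satisfied, and a direct computation using $23k+5 \le s \le n-23k-5$ shows that $m := n - m_0 - m_1 - 6k$ equals $s - 21k - 3$ in the first case and $n - s - 21k - 3$ in the second, so that $m \ge 2k+2 > 0$. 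The lemma then yields a geodesic cycle in $G$ of length at least $2m \ge 4k+4 \ge 3(k+1)$, giving the desired geodesic $\subdivk{K^3}{k}$ and the contradiction.

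The only delicate point is the choice of the two ``windows'': one has to check that $[0,m_0] \cup [n-m_1,n]$ covers every ball-index at which $C$ actually has a neighbour — this is exactly where ruling out a $Z_0$-neighbour in the second case is used — that the complementary interval $[m_0+1,n-m_1-1]$ avoids the three admissible index ranges supplied by \cref{starwidthclaim2}, and that $m_0+m_1$ stays small enough to keep $m$ positive; all three of these reduce to the inequalities on $s$ fixed after \cref{starwidthclaim1}. In contrast to \cref{starwidthclaim2}, this argument uses only the geodesic-$K^3$ obstruction and not the wrench $W$.
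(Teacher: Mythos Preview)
Your proof is correct and follows essentially the same approach as the paper's: both argue by contradiction, invoke \cref{starwidthclaim2} to localise $N_G(C)$ in $Z_0 \cup D_s \cup Z_n$, and then apply \cref{lem:FindingALongGeodesicCycle} with $r=3k$ and a window choice making $m \ge 2k+2$ to obtain a geodesic cycle of length at least $4k+4 > 3(k+1)$, contradicting the absence of a geodesic $\subdivk{K^3}{k}$. Your two-case split (on whether $C$ meets $Z_0$) is slightly more streamlined than the paper's three-case treatment, but the parameter choices and computations are identical.
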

	
	\begin{claimproof}
		First, we assume for a contradiction that $C$ has neighbours in both $\bigcup_{i = 0}^{3k} B_i$ and $D_s$.
        Let $r= 3k$, $m_0 = 3k$ and $m_1 = n - (s - 12k - 3)$.
        Then, since $s \geq 23k+5$,
        \[
        n - m_0 - m_1 - 2r = n - 3k - (n - (s - 12k - 3)) - 6k = s - 21k - 3 \geq 2k + 2 \geq 2,
        \]
		so we can apply \cref{lem:FindingALongGeodesicCycle} to obtain a geodesic cycle in~$G$ of length at least $2\cdot(2k + 2) > 3k + 3$.
		This contradicts our assumption that $G$ does not contain a geodesic $\subdivk{K^3}{k}$.
		
		Hence, if $C$ has neighbours in $D_s$, then it does not have neighbours in $\bigcup_{i = 0}^{3k} B_i$, and by symmetry it neither has neighbours in $\bigcup_{i = n - 3k}^n B_i$.
		Furthermore, if $C$ has neighbours in both $\bigcup_{i = 0}^{3k} B_i$ and $\bigcup_{i = n - 3k}^n B_i$, then it follows again from \cref{lem:FindingALongGeodesicCycle} that $G$ contains a geodesic $\subdivk{K^3}{k}$, which yields the same contradiction.
	\end{claimproof}

    Let us first consider all the components of $G - G_P$ that attach to $G_P$ either close to the start or close to the end of $P$. By \cref{lem:ComponentsAttachingToTheEndOfP} these components can only contain vertices which are close to $P$. 
	This fact allows us to find a path-decomposition of $G' := G[V']$ of low radial width and spread where $V'$ is the union of the $B_i$ and the vertices of components $C$ of $G - G_P$ with $N_G(C) \subseteq \bigcup_{i = 0}^{3k} B_i$ or $N_G(C) \subseteq \bigcup_{i = n - 3k}^n B_i$, as follows.
	
	\begin{claim}\label{starwidthclaim4}
    All vertices of $G'$ have distance at most $9k$ from $P$.
    Also, there is an honest decomposition $(P, \cV')$ 
    of~$G'$ of radial width at most $24k + 5$ and radial spread at most $24k+5$ such that
    \begin{itemize}
        \item all vertices in the bag $V_s'$ of the node $p_s$ of $P$ have distance at most $24k + 5$ from $p_s$ in $G[V'_s]$, and
                \item for every $v \in N_G(G')$ the set $N_G(v) \cap V'$ is contained in the bag $V_s'$.
    \end{itemize}
	\end{claim}
	
	\begin{claimproof}
		First, we show that vertices of $G' - G_P$ are close to $p_0$ or $p_n$.
		Let $C$ be a component of $G' - G_P$ with~$N_G(C) \subseteq \bigcup_{i = 0}^{3k} B_i$.
		By \cref{lem:ComponentsAttachingToTheEndOfP} applied to $U = \{p_n\}$ and $\ell = 3k$, every vertex $v \in C$ has distance at most $2\cdot (3k) + 3k = 9k$ to $P$, and thus has distance at most $12k$ to $p_0$.
		By symmetry, every component~$C$ of~$G' - G_P$ with $N_G(C) \subseteq \bigcup_{i = n - 3k}^n B_i$ only contains vertices whose distance from~$p_n$ is at most $12k$.
		
		Now we construct the $P$-decomposition.
		For $i\in [n]$ define
		\[
        V'_i := \bigcup_{p_j \in B_P(p_i, 12k + 3)} B_{G'}(p_j,12k + 2).
        \]
		By \cref{lem:UnionOfBallIsDecomp}, the $V'_i$ are the bags of a $P$-decomposition of radial spread at most $2(12k+2) + 1 = 24k + 5$, and every element of $V'_i$ has distance at most~$24k + 5$ from $p_i$ in $G[V'_i]$ by construction.
		Furthermore, every component of $G' - G_P$ is contained in $B_{G'}(p_0,12k + 2) \subseteq V'_0$ or in $B_{G'}(p_n,12k + 2) \subseteq V'_n$.
  
		Also, for every $B_i$ that contains a neighbour of $G - G' \subseteq G - G_P$ we have $s - 12k - 3 \leq i \leq s + 12k + 3$ by \cref{starwidthclaim2} and thus $B_i \subseteq B_{G'}(p_i,12k + 2) \subseteq V'_s$.
		As $V'_s$ is the bag corresponding to $p_s$, this completes the proof.
	\end{claimproof}
    
	We now construct path-decompositions of low radial width and spread of the remaining components, that is of the components of $G - V'$.
    We then combine these path-decompositions with the path-decomposition $(P, \cV')$ of~$G'$ to a star-decomposition of~$G$, whose central node will be~$p_s$. 
    For this, we need to enlarge the bag $V'_s$ assigned to~$p_s$ a little.
    Indeed, at the moment, the components of $G - G'$ need not have low radial path-width; in fact, they can still be star-like.
	But if we delete larger balls around the nodes in $P$ that are close to $p_s$, we indeed end up with components that are path-like.
	For this, we define $V''$ to be the set of vertices of $G$ that have distance at most $18k + 4$ from $P$, and let $G'':=G[V'']$ (so $G' \subseteq G''$). In particular, by \cref{starwidthclaim2}, every component of $G-G''$ `attaches in the middle of $P$', i.e.\ every neighbour of $G''$ in a component of $G-G''$ has distance $18k+5$ to a $p_i$ with $s-12k-3 \leq i \leq s + 12k+3$. Let further $\cC$ be the set of components that contain a vertex whose distance from $P$ is at least $24k+5$, and let $\cC'$ be the set of all other components of $G-V''$. We now add all vertices in $V''\setminus V'$ and all components from $\cC'$ to the bag $V'_s$ that is indexed by $p_s$. More precisely, we define
    \[
	V_s := V_s' \cup (V'' \setminus V') \cup \{V(C) \colon C \in \cC'\}.
	\]
    Further, we set $V_i := V'_i$ for all $i \neq s \in [n]$. Let also $V''' := V''\cup V_s$ and $G''' := G[V''']$. 

    \begin{claim} \label{claim:Starwidth:New}
        $(P, \cV)$ is a decomposition of $G'''$ of radial width at most $36k+7$ and radial spread at most $24k+5$ and every vertex in $V_s$ has distance at most $36k+7$ from $p_s$ in $G[V_s]$.
    \end{claim}
    
    \begin{claimproof}
        Since $N_G(v) \cap V'$ is contained in $V'_s \subseteq V_s$ for every vertex $v \in N_G(G')$ by \cref{starwidthclaim4}, and because $(P, \cV')$ is a decomposition of $G'$, it follows that $(P, \cV)$ is a decomposition of $G'''$. Its radial spread is at most $24k+5$ by \cref{starwidthclaim4} and because each vertex in $G'''- V'$ is only contained in $V_s$. Moreover, every part $G[V_i]$ with $i \neq s$ has radius at most $24k+5$ by \cref{starwidthclaim4}, so it remains to consider $G[V_s]$. 

        Let $v \in V_s$ be given. If $v \in V'_s$, then $d_{G[V_s]}(v, p_s) \leq 24k+5$ by \cref{starwidthclaim4}. Otherwise, $v$ has distance at most $24k+4$ from some vertex $p_i$ of $P$, where $s-12k-3 \leq i \leq s+12k+3$ by \cref{starwidthclaim2}, and hence
        \[
        d_{G[V_s]}(v, p_s) \leq d_{G[V_s]}(v, p_i) + d_{G[V_s]}(p_i, p_s) \leq (24k+4) + (12k+3) = 36k + 7
        \]
        where we used that $p_iPp_s \subseteq G[V'_s] \subseteq G[V_s]$ by the definition of $V'_s$.
    \end{claimproof}

    We now show that all components of $G-V'''$, i.e.\ those in $\cC$, are path-like.	
	\begin{claim}\label{starwidthclaim5}
        Let $C$ be a component in $\cC$.
		Then there is an honest decomposition of $G[V(C) \cup V_s]$ modelled on a path $Q$ of radial width at most $54k+8$ and radial spread at most $12k+2$ such that the bag corresponding to the last node of $Q$ contains $V_s$, and $V_s$ only meets bags assigned to nodes of distance at most $6k$ to the last node of $Q$.
            
												            			\end{claim}

	\begin{figure}
		\centering
		\begin{tikzpicture}[scale=0.4,smallVertex/.style={inner sep=1}]
		\node[smallVertex] (a9) at (-9,0) {};
		\node[smallVertex] (a8) at (-8,0) {};
		\node[smallVertex] (a7) at (-7,0) {};
		\node[smallVertex] (a6) at (-6,0) {};
		\node[smallVertex] (a5) at (-5,0) {};
		\node[smallVertex] (a4) at (-4,0) {};
		\node[smallVertex] (a3) at (-3,0) {};
		\node[smallVertex] (a2) at (-2,0) {};
		\node[smallVertex] (a1) at (-1,0) {};
		\node[smallVertex] (a0) at (0,0) {};
		\node[smallVertex] (b9) at (9,0) {};
		\node[smallVertex] (b8) at (8,0) {};
		\node[smallVertex] (b7) at (7,0) {};
		\node[smallVertex] (b6) at (6,0) {};
		\node[smallVertex] (b5) at (5,0) {};
		\node[smallVertex] (b4) at (4,0) {};
		\node[smallVertex] (b3) at (3,0) {};
		\node[smallVertex] (b2) at (2,0) {};
		\node[smallVertex] (b1) at (1,0) {};
		\node[smallVertex] (q9) at (0,1) {};
		\node[smallVertex] (q8) at (0,2) {};
		\node[smallVertex] (q7) at (0,3) {};
		\node[smallVertex] (q6) at (0,4) {};
		\node[smallVertex] (q5) at (0,5) {};
		\node[smallVertex] (q4) at (0,6) {};
		\node[smallVertex] (q3) at (0,7) {};
		\node[smallVertex] (q2) at (0,8) {};
		\node[smallVertex] (q1) at (0,9) {};
		\node[smallVertex] (q0) at (0,10) {};
		\node[smallVertex] (q01) at (0,11) {};
		\node[smallVertex] (q02) at (0,12) {};
		\node[smallVertex] (q03) at (0,13) {};
		\node[smallVertex] (q04) at (0,14) {};
		\node[smallVertex] (q05) at (0,15) {};
		\node[smallVertex] (q06) at (0,16) {};
		
				\foreach \x/\start/\ende in {a9/-14/-8.5,a8/-8.5/-7.5,a7/-7.5/-6.5,a6/-6.5/-5.5,a5/-5.5/-4.5,a4/-4.5/-3.5,a3/-3.5/-2.5,a2/-2.5/-1.5,a1/-1.5/-0.5,a0/-0.5/0.5,b1/0.5/1.5,b2/1.5/2.5,b3/2.5/3.5,b4/3.5/4.5,b5/4.5/5.5,b6/5.5/6.5,b7/6.5/7.5,b8/7.5/8.5,b9/8.5/14}{
			\begin{scope}
				\clip (\start,5) rectangle (\ende,-5);
				\draw[violet, thick, densely dotted] (\x) circle (4);
			\end{scope}
		}

				\draw[thick, purple, dotted] 
		\foreach \x in {a0,a1,a2,a3,a4,a5,a6,b1,b2,b3,b4,b5,b6}{
			(\x) circle (1.8)
		}
		\foreach \x in {a7,a8,a9,b7,b8,b9}{
			(\x) circle (2.8)
		};
		\foreach \x in {a0,a1,a2,a3,a4,a5,a6,b1,b2,b3,b4,b5,b6}{
			\fill[white] (\x) circle (1.75);
		}
		\foreach \x in {a7,a8,a9,b7,b8,b9}{
			\fill[white] (\x) circle (2.75);
		}

				\foreach \x/\start/\ende in {a9/-11/-8.5,a8/-8.5/-7.5,a7/-7.5/-6.5,a6/-6.5/-5.5,a5/-5.5/-4.5,a4/-4.5/-3.5,a3/-3.5/-2.5,a2/-2.5/-1.5,a1/-1.5/-0.5,a0/-0.5/0.5,b1/0.5/1.5,b2/1.5/2.5,b3/2.5/3.5,b4/3.5/4.5,b5/4.5/5.5,b6/5.5/6.5,b7/6.5/7.5,b8/7.5/8.5,b9/8.5/11}{
			\begin{scope}
				\clip (\start,2) rectangle (\ende,-2);
				\draw[thick, red, loosely dotted] (\x) circle (1.75);
			\end{scope}
		}
	
				\foreach \x/\start/\ende in {a0/-2/0.5,q9/0.5/1.5,q8/1.5/2.5,q7/2.5/3.5,q6/3.5/4.5,q5/4.5/5.5,q4/5.5/6.5,q3/6.5/7.5,q2/7.5/8.5,q1/8.5/9.5,q0/9.5/10.5,q01/10.5/11.5,q02/11.5/12.5,q03/12.5/13.5,q04/13.5/14.5,q05/14.5/15.5,q06/15.5/16.3}{
			\begin{scope}
				\clip (2,\start) rectangle (-2,\ende);
				\draw[thick, cyan, dashed] (\x) circle (1.5);
			\end{scope}
		}
		
				\foreach \x in {a8,a9,a2,a1,a0,b1,b2,b8,b9}{
			\fill[black] (\x) circle (0.1);
		}		\foreach \x in {q05,q06,q5,q8,q9}{
			\fill[black] (\x) circle (0.1);
		}
		
				\draw[black] (a9) -- (a8);
		\draw[black] (b9) -- (b8);
		\draw[black] (a2) -- (a1) -- (a0) -- (b1) -- (b2);
		\draw[black,dotted] (a8) -- (a2) (b1) -- (b8);
				\draw[black] (a0) -- (q9) -- (q8);
		\draw[black] (q06) -- (q05);
		\draw[black,dotted] (q8) -- (q5) (q5) -- (q05);
		
				\draw[densely dashed, blue] (-4,6)
			to[out=270, in=180] (0,4.6)
			to[out=0, in=270] (3,6)
			to (3,15)
			to[out=90, in=0] (0,16.4)
			to[out=180, in=90] (-4,15)
			to (-4,6);
			
				\begin{scope}
			\clip (-4.1,16.5) rectangle (-1.5,4.5);
			\path[pattern=north east lines, pattern color=orange] (-4,6)
				to[out=270, in=180] (0,4.6)
				to[out=0, in=270] (3,6)
				to (3,15)
				to[out=90, in=0] (0,16.4)
				to[out=180, in=90] (-4,15)
				to (-4,6);
		\end{scope}
		
				\node at (-9,2) {$G_P$};
		\node at (-9,3.1) {$G'$};
		\node at (-9,4.3) {$G''$};
		\node at (2.2,3) {$G_Q$};
		\node at (3.5,15) {$C$};
		\node at (-3,11.5) {$C'$};
        \node at (0.7, 5) {$q_m$};
        \node at (0.7, 16) {$q_0$};
        \node at (0, -0.5) {$p_t = q_\ell$};

		\end{tikzpicture}
		\caption{The setting of the proof of \cref{starwidthclaim5}.}
		\label{fig:star-claim5}
	\end{figure}
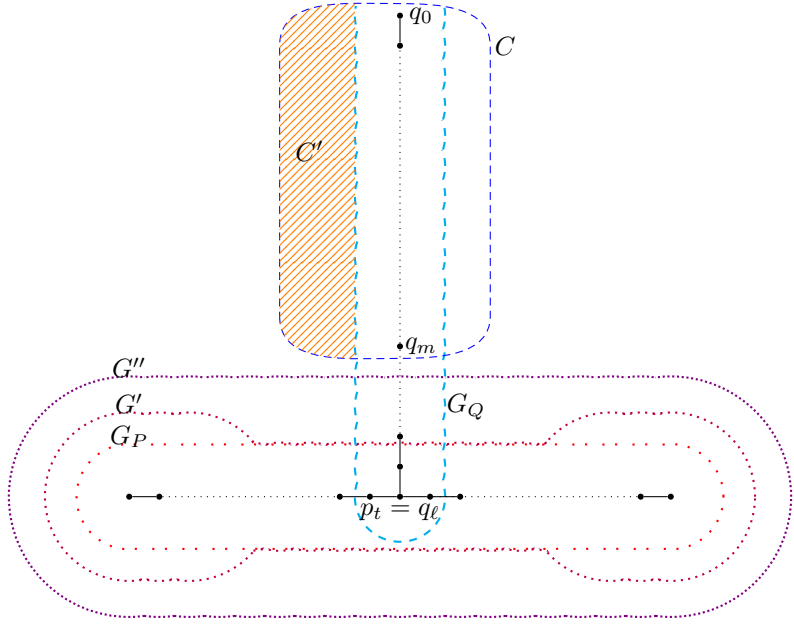

	\begin{claimproof}
        The reader may look at \cref{fig:star-claim5} to follow the proof more easily.
		Let $q_0$ be a vertex of $C$ of maximal distance from $P$.
                Since $C \in \cC$ the vertex $q_0$ has distance at least $24k + 5$ from $P$.
		Let $Q = q_0 \ldots q_{\ell}$ be a shortest path in $G$ from $q_0$ to $P$ (so $Q$ is geodesic in~$G$).
        Note that $C$ is contained in a component of $G - G'$, and thus $Q$ ends in a vertex $p_t=q_{\ell}$ with $s - 12k - 3 \leq t \leq s + 12k + 3$ by \cref{starwidthclaim2}.
        In particular $3k < t < n - 3k$.
        Let $m$ be the last index such that $q_m \notin G''$.
        Then $Qq_m$ is a shortest $q_0$--$N_G(G'')$-path, and $m = \ell - 18k - 4$.
        
        For every $0 \leq i \leq \ell$ we define $B_i^Q$ to be the ball in $G$ of radius~$3k$ around~$q_i$, and let $G_Q = G[\bigcup_{i = 0}^\ell B_i^Q]$.
        We claim that every vertex of $C - G_Q$ has distance at most $12k$ from $q_0$. 
        To see this, let~$C'$ be any component of $C - G_Q$, and let~$C''$ be the component of $G - G_Q$ containing~$C'$.
		We first show that $C''$ has no neighbour in $\bigcup_{i = 3k+1}^\ell B_i^Q$ and that this in particular implies that $C' = C''$.

		Towards a contradiction, we first assume that $C''$ has a neighbour in some~$B_j^Q$ with \mbox{$3k < j < \ell - 12k - 3$}.
        Then $q_j Q$ has length at least $12k + 4 = 4\cdot(3k + 1)$.
        So \cref{lem:FindingA3geodesicW}~\cref{CaseQ} applied to $p_{t - 3k - 1}P p_{t + 3k + 1}$, $q_{j - 3k - 1}Q$ and a shortest path from $C''$ to $q_j$ yields that~$G$ contains a $3$-quasi-geodesic $\subdivk{W}{3k}$, a contradiction.

        Second, suppose for a contradiction that~$C''$ has a neighbour in $\bigcup_{i = \ell - 12k - 3}^\ell B_i^Q$. 
        In addition,~$C''$ has a neighbour in $\bigcup_{i = 0}^{3k} B_i^Q$. 
        Indeed, since vertices in $\bigcup_{i = \ell - 12k - 3}^\ell B_i^Q$ have distance at most $15k + 3 < 18k+4$ from~$P$, they are neither contained in~$C'$ nor do they have neighbours in $C'$. 
        Since $C' \subseteq C''$, this implies that~$C'$ has no neighbours in $\bigcup_{i = 3k+1}^\ell B_i^Q$.
		As $C$ is connected and contains $q_0 \in G_Q$, $C'$ has a neighbour in~$G_Q$ which then has to be contained in $\bigcup_{i = 0}^{3k} B_i^Q$.
        Hence, $C''$ also has a neighbour in $\bigcup_{i = 0}^{3k} B_i^Q$.
		
        For $r = 3k$, $m_0 = 3k$ and $m_1 = 12k + 3$, we have that
        \[
        \ell - m_0 - m_1 - 2r = \ell - 3k - 12k - 3 - 6k = \ell - 21k - 3 \geq 2k + 2,
        \]
		so we can apply \cref{lem:FindingALongGeodesicCycle} to $Q$ and $C''$ to obtain a geodesic cycle of length at least $2(2k + 2) > 3k + 3$, which contradicts our assumption on $G$.
        Thus, $N_G(C'') \subseteq \bigcup_{i = 0}^{3k} B_i^Q$.
        As $G''$ is connected and contains $B^Q_\ell$, it follows that $C'' \cap G'' = \emptyset$ and thus $C' = C''$, if $G''$ and $\bigcup_{i = 0}^{3k} B_i^Q$ do not meet.
        
        Indeed, suppose for a contradiction that $G''$ and $\bigcup_{i = 0}^{3k} B_i^Q$ meet in a vertex $v$. 
        Then 
        \[d_G(P,q_0) \leq d_G(P,v) + d_G(v,q_i) + d_G(q_i,q_0) \leq 18k+4 +3k +3k = 24k+4,\]
        which contradicts the choice of $q_0$.
		
		We can now show that all vertices in $C'$ have distance at most $12k$ from $q_0$. For this, consider the induced subgraph of $G$ on the vertex set $V(G'') \cup V(C)$.
		In this graph, $q_0$ has maximal distance from $P$ and thus we can apply \cref{lem:ComponentsAttachingToTheEndOfP} to the shortest $q_0 - P$ path $Q$, $r = 3k$, $\ell = 3k$ and the component~$C'$.
		So every vertex in $C'$ has distance at most $9k$ from $Q$.
		As every shortest $C'$--$Q$ path ends in some $q_i$ with~$i\leq 3k$, this implies that every vertex in $C'$ has distance at most $12k$ from $q_0$.

		Now we obtain a path-decomposition of $G[V(C) \cup V_s]$ as follows.
		In a first step, for $i\leq \ell$ we define
		\[
		U_i' := \bigcup_{q_j \in B_{Q}(q_i, 3k+1)} B_j^Q.
		\]
        By \cref{lem:UnionOfBallIsDecomp}, the $U_i'$ are the bags of a $Q$-decomposition of $G_Q$ of radial spread at most $4\cdot (3k) + 2 = 12k+2$; and every vertex in any $U_i'$ has distance at most $6k + 1$ from $q_i$ in $G[U'_i]$.
        
		In a second step, for $Q^* = Qq_m$, we define a $Q^*$-decomposition of $G_Q^* \cup C$ where $G_Q^* := G[\bigcup_{i = 0}^{m+3k+1} B_i^Q]$.
        For that, we let $U''_i := U_i'$ for $1\leq i \leq m$.
		The neighbourhood of every component of $C - G_Q$ is contained in $U_0'$, so we can just add its vertices to $U_0'$: let $U''_0$ be the union of $U_0'$ and all vertices of components of $C - G_Q$.
		By construction $U''_0$ has radius at most $12k$.
        Then the $U''_i$ are the bags of a $Q^*$-decomposition of $G_Q^* \cup C$. Indeed, every vertex of $C - G_Q$ is contained in $U''_0$ by definition, and every vertex of $C \cap G_Q$ is contained in $G_Q^*$ since all vertices in $\bigcup_{i = m + 3k + 2}^\ell B_i^Q$ have distance at most $18k + 3$ from $P$ and are thus not contained in $C$. Moreover, $(Q^*, \cU'')$ has radial width at most $12k$ and radial spread at most $12k+2$.

        We now restrict the bags~$U''_i$ to~$C$ and add $V_s$ to all $U''_i$ with $i \geq m - 6k$, i.e.\ we set $U_i := (U''_i \cap V(C))$ for $i < m-6k$ and $U_i := (U''_i \cap V(C)) \cup V_s$ for all $i \geq m-6k$. To verify that $(Q^*, \cU)$ is a decomposition of $G[V(C) \cup V_s]$, it suffices to show that $N_G(G-C) \subseteq U_m$.

                For this, let $v \in N_G(G-C) \subseteq V(C)$ be given, and let~$w$ be a neighbour of~$v$ in $G-C$. 
        As~$C$ is a component of $G - G''$, this means that $w \in G''$. 
        As we have already seen above, no component of $C - G_Q$ has neighbours in~$G''$, so $v \in G_Q$, which in particular implies that~$v \in G_Q^*$. 
        If $v \in \bigcup_{i = m-3k-1}^{m + 3k+1} B_i^Q$, then $v\in U_m$ by construction. 
        So we may assume that $v \in B_j^Q$ for some $j < m - 3k - 1$.
        But this implies that $q_0$ has distance at most 
        \[
        \dist_G(q_0, q_j) + \dist_G(q_j, v) + \dist_G(v, w) + \dist_G(w, P) \leq (m - 3k - 2) + 3k + 1 + (18k+4) = m + 18k + 3 < \ell
        \]
        from $P$, which contradicts that~$Q$ is a shortest $q_0$--$P$ path.

        Hence, $(Q^*, \cU)$ is a path-decomposition of $G[V(C) \cup V_s]$, and it has radial spread at most $12k+2$ by construction. Moreover, it has radial width at most $54k+8$: Let $i \leq m$ and let $v \in U_i$. If $i \leq m-6k$, then, since $Q^* \subseteq C$ is a shortest $q_0$--$N_G(G-C)$ path in $G$ and $G[U'_i]$ has radius at most $12k$, we have $U_i = U'_i$ and hence $G[U_i]$ has radius at most $12k$. Now suppose that $i > m-6k$. If $v \in V_s$, then $v$ has distance at most $36k+7$ from $p_s$ by \cref{claim:Starwidth:New}. Otherwise, if $v \in U_i \setminus V_s$ then
        \[
        d_{G[U_i]}(v, p_s) < d_{G[U_i]}(v, q_i) + d_{G[U_i]}(q_i, q_m) + d_{G[U_i]}(q_m, p_s) \leq 12k + 6k + (36k+8) = 54k + 8,
        \]
        where we used that $q_iQq_m \subseteq G[U_i]$ by definition, that $q_m \in N_G(V'_s) \subseteq V_s$ by \cref{starwidthclaim4}, and that every vertex in $V_s$ has distance at most $36k+7$ from $p_s$ by \cref{claim:Starwidth:New}.
	\end{claimproof}

    We now combine these path-decompositions of the components of $G - V'''$ with the path-decomposition $(P, \cV)$ of~$G'''$ to a star-decomposition of~$G$.
	For this, recall that by \cref{claim:Starwidth:New}, $(P, \cV)$ is a path-decomposition of $G'''$ of radial width at most $36k + 7$ and radial spread at most $24k+5$. Moreover, the components in $\cC$ are precisely the components of $G-V'''$.
	For every component $C\in \cC$, \cref{starwidthclaim5} guarantees the existence of a decomposition $(Q_C,\cU_C)$ of $C$ modelled on a path $Q_C$ of radial width at most $54k+8$ and radial spread at most $12k+2$ such that the bag in $\cU_C$ corresponding to the last vertex of $Q_C$ contains $V_s$.
    
    We obtain a subdivided star $S$ from the disjoint union of $P$ and the $Q_C$ by adding edges from the last vertices of the $Q_C$ to $p_s$.
	Now every vertex $h$ of $S$ already has a bag, which we denote by $V_h$, in exactly one of the path-decompositions $(P, \cV)$ or $(Q_C, \cU_C)$.
	It is straightforward to check that $(S, \cV)$ is a star-decomposition of~$G$ and that is radial width is at most $54k+8$. Moreover, its radial spread is at most $(24k+5) + 1 + 6k = 30k+7$, as only vertices in $V_s$ may lie in more than one decomposition of the form $(P, \cV)$ or $(Q_C, \cV)$. This completes the proof.
    \end{proof}

We remark that the proof actually yields that the subdivided star $S$ is $(1, 60k+14)$-quasi-isometric to $G$ (where we may choose $\phi(h) = h$ for all vertices $h$ of $S$ as $V(S) \subseteq V(G)$ by construction). Here is a hint for the proof: The proof of \cref{thm:starWidth} shows that all vertices in $G$ have distance at most $58k+9 \leq 60k+14$ from some vertex of $S$. Moreover, since $P$ and all paths $Q_C$ are geodesic in $G$, it remains to check the distances of vertices $h, h'$ of $S$ that lie on distinct such paths. Assume $h \in V(Q_C)$ and $h' \in V(Q_{C'})$ ($h \in V(Q_C)$ and $h' \in V(P)$ is similar). Then $d_S(h, h') \leq d_G(h, h')$ as $Q_C \subseteq C$ is a shortest path in $G$ between its first vertex and $N_G(G-C)$. Also, $d_G(h, h') \leq d_S(h,p_s) + d_G(N_G(G-C), N_G(G-C')) + d_S(p_s, h') = d_S(h, h') + 60k + 14$ since $p_s$ is the centre of the subdivided star $S$ and because $d_G(N_G(G-C), N_G(G-C')) \leq 2\cdot(18k+4) + 2\cdot(12k+3)$ follows from the fact that $C, C'$ are components of $G-V'' = G - B_G(P, 18k+4)$ and from \cref{starwidthclaim2}.


\arXivOrNot{
\appendix

\section{Where quasi-geodesic topological minors are not the right obstruction} \label{app:CterexQuestionRadWidth}

Davies, Hickingbotham, Illingworth and McCarty showed that~\cref{conj:GeorgakopoulosPapasoglu} is false in general.
Hence, the strengthening of~\cref{conj:GeorgakopoulosPapasoglu} to quasi-geodesic topological minors also has a negative answer.
However, this strengthening fails earlier than~\cref{conj:GeorgakopoulosPapasoglu}, and in this appendix we exhibit sets~$\cX$ exemplifying this.
To make this precise, let us give some background on minor-closed classes of graphs.

A class~$\cG$ of graphs is~\emph{minor-closed} if~$G' \in \cG$ whenever~$G'$ is a minor of some~$G \in \cG$.
Further, a class~$\cG$ of graphs is \emph{defined by a finite list~$\cX = \{X_1, \dots, X_\ell\}$ of forbidden (topological) minors} if a graph~$G$ is contained in~$\cG$ precisely if no~$X_i$ is a (topological) minor of~$G$.

The Graph Minor Theorem by Robertson and Seymour characterises minor-closed graph classes.
\begin{theorem}[\cite{GMXX}]\label{thm:MinorClosedForbiddenListMinors}
    Every minor-closed class of graphs is defined by a finite list of forbidden minors.
\end{theorem}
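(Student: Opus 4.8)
The plan is to deduce this from the \emph{Graph Minor Theorem} in its well-quasi-ordering form: the class of all finite graphs, quasi-ordered by the minor relation, contains no infinite antichain \cite{GMXX} (equivalently, is well-quasi-ordered, since a proper minor has strictly fewer vertices-plus-edges, so strictly descending chains are automatically finite). Granting this input, the statement becomes a routine exercise about up-closed sets and minimal elements, which I would carry out as follows.

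Let $\cG$ be a minor-closed class, and write $\overline\cG$ for its complement within the class of all finite graphs. Since $\cG$ is minor-closed, $\overline\cG$ is closed upwards in the minor order, meaning that if $X \in \overline\cG$ and $X$ is a minor of a graph $G$, then $G \in \overline\cG$. Let $\cF$ consist of those $X \in \overline\cG$ that are \emph{minor-minimal} with the property $X \notin \cG$, i.e.\ such that every proper minor of $X$ lies in $\cG$.

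First I would verify that $\cG$ is exactly the class of graphs having no member of $\cF$ as a minor, and this direction needs no deep input. If some $X \in \cF \subseteq \overline\cG$ is a minor of $G$, then $G \in \overline\cG$ by the upward closedness above, so $G \notin \cG$. Conversely, if $G \notin \cG$, then, as $G$ is finite and thus has only finitely many minors up to isomorphism, we may pick among the minors of $G$ lying in $\overline\cG$ one, say $X$, that is minimal in the minor order; by minimality every proper minor of $X$ is a minor of $G$ not in $\overline\cG$, hence in $\cG$, so $X \in \cF$ and $X$ is a minor of $G$.

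It remains to see that $\cF$ is finite, and this is the only step that genuinely uses the Graph Minor Theorem. The members of $\cF$ form an antichain in the minor order: if $X$ were a proper minor of $Y$ with both in $\cF$, then $X \in \overline\cG$ would contradict the minor-minimality of $Y$. Since the minor order on finite graphs admits no infinite antichain \cite{GMXX}, the antichain $\cF$ must be finite, and it is the desired finite list of forbidden minors. The main — indeed essentially the only substantial — obstacle is of course the well-quasi-ordering input itself, whose proof is the content of the entire Graph Minors series of Robertson and Seymour and lies far beyond the reach of a short argument; everything around it is bookkeeping.
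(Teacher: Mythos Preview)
Your argument is correct and is the standard deduction of the forbidden-minor characterisation from the well-quasi-ordering form of the Graph Minor Theorem. Note, however, that the paper does not give its own proof of this statement: it is simply quoted as a result of Robertson and Seymour with a citation to~\cite{GMXX}, so there is nothing to compare against beyond observing that your derivation is the expected one.
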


Robertson and Seymour showed in~\cite{GMVIII}*{(2.1)} that every minor-closed graph property that is defined by a finite list of forbidden minors is also defined by a still finite, but usually significantly larger list of forbidden topological minors.
This then yields the following corollary of~\cref{thm:MinorClosedForbiddenListMinors}:

\begin{corollary} \label{lem:MinorClosedForbiddenListTopMinors}
    Every minor-closed class of graphs is defined by a finite list of forbidden topological minors. \qed
\end{corollary}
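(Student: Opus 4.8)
The plan is to combine the Graph Minor Theorem with the classical fact that forbidding a graph as a minor is the same as forbidding finitely many graphs as topological minors. So first I would apply \cref{thm:MinorClosedForbiddenListMinors} to the given minor-closed class~$\cG$ to obtain a finite list~$\{X_1,\dots,X_\ell\}$ of graphs such that a graph lies in~$\cG$ if and only if none of the~$X_i$ is a minor of it. It then remains to replace ``minor'' by ``topological minor'' at the cost of enlarging the list.

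For this reduction I would prove the following local statement: for every graph~$X$ there is a \emph{finite} set~$\mathcal{S}(X)$ of graphs, each of maximum degree at most~$3$, such that a graph~$G$ contains~$X$ as a minor if and only if~$G$ contains some~$S \in \mathcal{S}(X)$ as a topological minor. To build~$\mathcal{S}(X)$, replace each vertex~$v$ of~$X$ by a tree~$T_v$ whose leaves are in bijection with the edges of~$X$ at~$v$ and all of whose internal nodes have degree~$3$, and then glue these trees along the edges of~$X$; since for a fixed degree~$d$ there are only finitely many such trees up to isomorphism and only finitely many ways to identify their leaves with the~$d$ edges at~$v$, the resulting set~$\mathcal{S}(X)$ is finite. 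For graphs of maximum degree at most~$3$ the minor and the topological minor relations coincide, which gives one direction: if some~$S \in \mathcal{S}(X)$ is a topological minor of~$G$, then contracting each subdivided tree~$T_v$ back to a single vertex exhibits~$X$ as a minor of~$G$. Conversely, given a model of~$X$ in~$G$ with connected branch sets~$B_v$, inside each~$B_v$ one chooses a minimal subtree joining the (at most~$\deg_X(v)$) vertices at which edges to other branch sets attach; suppressing degree-two vertices and splitting the remaining high-degree vertices of this tree realises, after taking the union over all~$v$, a subdivision of some member of~$\mathcal{S}(X)$ inside~$G$.

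Finally I would set~$\mathcal{L} := \bigcup_{i=1}^{\ell}\mathcal{S}(X_i)$, which is finite, and conclude: for every graph~$G$ we have~$G \notin \cG$ exactly when some~$X_i$ is a minor of~$G$, exactly when some member of~$\mathcal{L}$ is a topological minor of~$G$; hence~$\cG$ is defined by the finite list~$\mathcal{L}$ of forbidden topological minors. This is precisely the content of~\cite{GMVIII}*{(2.1)} applied to the finite forbidden-minor list supplied by \cref{thm:MinorClosedForbiddenListMinors}, so one may alternatively just quote it.

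I expect the only genuine obstacle to be the finiteness of~$\mathcal{S}(X)$, i.e.\ the observation that a degree-$d$ vertex can be ``split'' into a cubic tree in only boundedly many combinatorial ways; the two implications relating~$X$-minors of~$G$ to topological~$\mathcal{S}(X)$-minors of~$G$ are then routine, and issues with multigraphs or loops can be sidestepped since the~$X_i$ may be taken simple and loopless.
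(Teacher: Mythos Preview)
Your approach is exactly the paper's: combine the Graph Minor Theorem (\cref{thm:MinorClosedForbiddenListMinors}) with \cite{GMVIII}*{(2.1)}, which the paper simply cites without further argument. One slip in your supplementary sketch of \cite{GMVIII}*{(2.1)}: requiring the replacement trees~$T_v$ to have all internal nodes of degree \emph{exactly}~$3$ is too restrictive. The minimal subtree you extract inside a branch set~$B_v$ may have internal vertices of higher degree (think of a star~$K_{1,4}$ when $\deg_X(v)=4$), and you cannot ``split'' an actual vertex of~$G$ to realise a cubic tree there. The fix is to let~$\mathcal{S}(X)$ consist of the graphs obtained by replacing each~$v$ by a tree with~$\deg_X(v)$ labelled leaves and internal vertices of degree \emph{at least}~$3$; such trees have at most~$2\deg_X(v)-2$ vertices, so~$\mathcal{S}(X)$ is still finite, and the graph you get after suppressing degree-two vertices then lies in~$\mathcal{S}(X)$ directly, with no splitting needed.
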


Consider a minor-closed class of graphs defined by a finite list~$\{M_1, \dots, M_k\}$ of forbidden minors~(\cref{thm:MinorClosedForbiddenListMinors}) and by a finite list~$\{X_1, \dots, X_\ell\}$ of forbidden topological minors~(\cref{lem:MinorClosedForbiddenListTopMinors}).
How are the forbidden minors~$M_i$ and the forbidden topological minors~$X_j$ related to each other?
Robertson and Seymour described their structure in~\cite{GMVIII}*{Section~2} by `splitting' a vertex. Equivalently, one may reformulate it as follows:
the graphs~$X_1, \dots, X_\ell$ are the models of the~$M_1, \dots, M_k$ such that all the induced subgraphs on the branch sets are trees and if the branch set has size at least two, then every vertex in the branch set has degree at least~$3$ in~$G$.
In particular, each~$X_i$ \emph{arises} from some~$M_j$ in that way.

With this at hand, the following question indeed generalises the strengthening of~\cref{conj:GeorgakopoulosPapasoglu} for quasi-geodesic topological minors:
\begin{question} \label{qu:ConjStrengtheningTopMinors}
     Let~$\cM$ be a finite set of finite graphs, and let $\cX$ be the finite set of graphs which defines the class of graphs with not $M$ minor with $M \in \cM$ via forbidden topological minors; see \cref{lem:MinorClosedForbiddenListTopMinors}.
     Does there exists a function $f' : \N \rightarrow \N \times \N$ such that every graph with no $(\ge ck)$-subdivision of some $X \in \cX$ as a $c$-quasi-geodesic subgraph for some~$c \in N$ is $f'(k)$-quasi-isometric to a graph with no topological $X$ minor for any $X \in \cX$?
\end{question}

As stated before, \cref{qu:ConjStrengtheningTopMinors} has a negative answer even when~$\cX$ is the corresponding list of forbidden topological minors to very simple sets~$\cM$ such as~$\{K_{3,3}, K^5\}$, which characterises the planar graphs~\cite{wagner1937eigenschaft}.
The following proposition provides a sufficient criterion for this, via the detour of graph-decompositions as established in~\cref{prop:QuasiIsoIntro}.

\begin{proposition} \label{prop:QuObstrNegativeAnswer}
    Let~$\cM$ be a finite set of finite graphs, let~$\cH$ be the class of graphs without $M$ minor for~$M \in \cM$, and let~$\cX$ be the finite list of forbidden topological minors characterizing~$\cH$ (which exists by~\cref{lem:MinorClosedForbiddenListTopMinors}).
    Let~$m$ be the smallest number of vertices of degree~$\ge 3$ among all~$M \in \cM$.
    Suppose that for each~$M \in \cM$, if~$M$ has precisely~$m$ vertices of degree at least~$3$, then it also has a vertex of degree~$4$.
    
    Then there exists, for every given~$R \in \N$, a graph~$G$
    which has radial $\cH$-width at least~$R$ and contains no~$(\geq 1)$-subdivision of~$X_i$ for all~$1 \le i \le \ell$.

                \end{proposition}

\begin{proof}
    Let~$\cM = \{M_1, \dots, M_k\}$.
    Without loss of generality, we may assume that~$M_1$ has precisely~$m$ vertices of degree at least~$3$, has minimal maximum degree among all such~$M_j$, and is minimal among these~$M_j$ with respect to the number of vertices of degree~$\Delta(M_1)$.
    Let~$G'$ be the graph obtained from~$M_1$ by subdividing each edge at least~$4 R$ times.
    We then let~$G$ be the graph that arises from~$G'$ by `splitting' a vertex~$v$ of degree~$\Delta(M_1)$ into two adjacent vertices, one of which has degree exactly~$3$.
    More formally, $G$ is the model of~$G'$ that differs from~$G'$ only in that the branch set to~$v$ consists of the endvertices~$u_e$ and~$v_e$ of an edge~$e \in G'$ such that~$u_e$ has degree~$3$ and thus~$v_e$ has degree~$d_{G'}(v) - 1$.
    Note that~$G$ has precisely one more vertex of degree at least~$3$ than~$M_1$, maximum degree at most~$\Delta(M_1)$, and one vertex of degree~$\Delta(M_1)$ less than~$M_1$.
    We now show that~$G$ is as desired.

    Suppose for a contradiction that~$G$ has an~$\cH$-decomposition of radial width less than~$R$.
    From it, we then obtain an $\cH$-decomposition of $G'$ by adding $v$ to every bag that contains at least one of $v_e$ and $u_e$ and deleting $v_e$ and $u_e$ from all bags.
    By the construction of~$G$ from~$G'$, this~$\cH$-decomposition of~$G'$ has again radial width less than~$R$.
    But~$G'$ has \radialHWidth{$\cH$} at least $R$ by~\cref{lem:quasigeodesictopologicalminorsareobstructions}, a contradiction.
    
    It remains to show that~$G$ contains no~$(\geq 1)$-subdivision of any~$X \in \cX$. 
    Suppose for a contradiction that~$G$ contains a subgraph~$X_G$ that is a~$(\geq 1)$-subdivision of some~$X \in \cX$.
    This~$X$ arises from some~$M \in \cM$.
    We now distinguish two cases depending on whether~$M$ has precisely~$m$ vertices of degree at least~$3$ or not.

    If~$M$ has more than~$m$ vertices of degree at least~$3$, then~$M$ and hence~$X$ each have at least~$m+1$ vertices of degree at least~$3$.
    Thus, all vertices of degree at least~$3$ in~$G$ are branch vertices of~$X_G$, and in particular, $X_G$ contains both~$u_e$ and~$v_e$ as branch vertices.
    Since~$d_G(u_e) = 3$, this yields~$e \in X_G$.
    So~$X_G$ is not a~$(\geq 1)$-subdivision of~$X$, contradicting our assumption.

    So suppose that~$M$ has precisely~$m$ vertices of degree at least~$3$.
    With the same argument as above, $u_e$ and~$v_e$ cannot both be branch vertices of~$X_G$ of degree at least~$3$.
    Thus, $X_G$ has at most~$m$ branch vertices of degree at least~$3$.
    This implies~$X = M$, as~$X$ arises from~$M$.
    Now we cannot have~$\Delta(M) > \Delta(M_1)$, as~$\Delta(G) \le \Delta(M)$ and the subdivision~$X_G$ of~$X = M$ is a subgraph of~$G$.
    So by our minimal choice of~$M_1$, $M$ has maximum degree~$\Delta(M_1)$ and contains at least as many vertices of degree~$\Delta(M_1)$ as~$M_1$.
    But~$G$ has one vertex of degree~$\Delta(M_1)$ less than~$M_1$, and this contradicts that the subdivision~$X_G$ of~$X = M$ is a subgraph of~$G$.
    This completes the proof.
\end{proof}

\section{Counterexamples to stronger versions in the star and tree case} \label{app:Outlook}

A possible strengthening of \cref{thm:star-width-intro} and~\cref{conj:tree-width-intro} is suggested by the proofs of the path- and cycle-case (see \cref{thm:RadialPathWith:Technical,thm:RadialCycleWidth:Technical}).
The path- and cycle-decompositions which we exhibit there have an additional property: their decomposition graph~$H$ is itself a $C$-quasi-geodesic (in fact geodesic) subgraph of~$G$ such that every vertex of~$H$ is contained in its corresponding bag; here, the constant~$C$ is independent of the constant~$c$ in `$c$-quasi-geodesic~$(\geq ck)$-subdivision' in \cref{thm:star-width-intro} and~\cref{conj:tree-width-intro}.
The star-decomposition constructed in the proof of~\cref{thm:star-width-intro}, however, does not have this extra property, and in fact there is not always such a strengthened star-decomposition as the following example shows (see \cref{example:appendixstar} for more details).

\begin{example} \label{ex:DecompStarNotGeodesic}
    Let $d, k \geq 1$ be integers.
    Let $G$ be a subdivision of the graph $W$ depicted in \cref{fig:wrench} such that the edge between the two vertices of degree $3$ is subdivided less than $k$ times and every other edge is subdivided at least $d$ times.
    Then $G$ has radial star-width at most $\lceil k/2 \rceil$.
    But if a subdivided star $S$ is a subgraph of $G$, then any $S$-decomposition of $G$ with the extra property that every vertex of $S$ is contained in its corresponding part has radial width at least $\lceil (d+1)/2 \rceil$.
\end{example}

Still, the proof of \cref{thm:starWidth} can be adapted (with a slight worsening of the bounds) to obtain the following weaker property for star-decompositions:
For a large, but bounded integer~$C$, the decomposition star $S$ can be obtained from a tree~$T$ that is a $C$-quasi-geodesic subgraph of $G$ by contracting a subtree of bounded radius to a single vertex~$s$.
This vertex $s$ is then the centre of the star $S$, and the set of vertices of $T$ that were contracted to $s$ is a subset of the bag of the decomposition that corresponds to $s$.

An analogue of~\cref{ex:DecompStarNotGeodesic} also holds in the more general case of tree-decompositions:
there is no~$C \in \N$ such that if a graph~$G$ has low \radialHWidth{tree}, then there exists a tree-decomposition of~$G$ of small {\radialWidth} whose decomposition tree is a~$C$-quasi-geodesic subgraph of~$G$.
We prove this with our rather involved \cref{ex:TreeOfWheels}, which is related to constructions in~\cites{KLMPW, bergerseymourboundeddiamTD}.
The existence of such an example suggests that any further work towards an answer to~\cref{conj:tree-width-intro} will have to build on new methods for constructing graph-decompositions of low radial width.

\subsection{Counterexamples to stronger structural dualities}\label{appendix:explanation}

Here we present further details for the examples mentioned above.
We first give examples of graphs of large \radialHWidth{path} which contain neither a long~$c$-quasi-geodesic cycle nor a~$c$-quasi-geodesic~$(\geq 1)$-subdivision of the claw for any~$1 \le c < 2$. 

\begin{example}\label{example:appendixgeodesic}
    Let $k$ be an integer.
    \begin{enumerate}
        \item Assume $k \geq 1$. By completing the neighbourhood of the centre vertex of the $k$-subdivided claw to a $K^3$, we obtain a graph $G$ which contains a $2$-quasi-geodesic $k$-subdivided claw but $G$ contains neither a $c$-quasi-geodesic cycle of length $\geq 4$ nor a $c$-quasi-geodesic claw for every $c$ with $1 \leq c < 2$.
        \item (\cite{bergerseymourboundeddiamTD}) Assume $k \geq 2$. The $k \times k$ grid has radial tree-width at least $(k-2)/2$ by \cref{lem:quasigeodesictopologicalminorsareobstructions}, since it contains a cycle of length $4(k-1)$ as $2$-quasi-geodesic subgraph; but every $c$-quasi-geodesic cycle with $1 \leq c < 2$ has length $4$.
    \end{enumerate}   
\end{example}

Our second example, a rephrasing of~\cref{ex:DecompStarNotGeodesic}, demonstrates that, given a graph~$G$ of small \radialHWidth{star}, we cannot always find a star-decomposition whose decomposition star is a~$c$-quasi-geodesic subgraph of~$G$ such that every vertex of this subgraph belongs to its corresponding bag.

\begin{example}\label{example:appendixstar}
    Let $k,d \geq 0$ be integers.
    Let $G$ be a subdivision of the graph $W$ depicted in \cref{fig:wrench} such that the edge of $W$ between the two vertices $u,v$ of degree $3$ is subdivided at most $k$-times and every other edge is subdivided at least $d$-times.
    Then $G$ has a star-decomposition $(S,(V_t)_{t \in S})$ of radial width at most~$\lceil (k+1)/2 \rceil$, but there is no star-decomposition $(S',(V'_t)_{t \in S})$ of radial width at most $\lfloor d/2\rfloor $ such that the star $S'$ is a subgraph of $G$.
\end{example}

\begin{proof}
    First, we show that there is a star-decomposition $(S,(V_t)_{t \in S})$ of low radial width: 
    Let $S$ be the subdivided star obtained from $G$ by contracting the unique $u$--$v$ path $P$ in $G$ to a vertex $s$.
    Now let $V_s := V(P)$, and $V_t := \{t\} \cup N_G(t)$ for every other node $t \neq s$ of $S$.
    Then $(S,(V_t)_{t \in S})$ is the desired star-decomposition of $G$.

    Secondly, we show that every star-decomposition $(S',(V'_t)_{t \in S})$ of $G$ where $S'$ is a subgraph of $G$ has radial width at least $\lfloor d/2 \rfloor$:
    Since $S'$ is a subgraph of $G$ which is a star, it is either a path or a star centred at $u$ or $v$.
    Hence, one of the four vertices of $G$ of degree $1$ has distance $d+1$ to $S'$. 
\end{proof}

We now present an example that in general, even for graphs with low radial tree-width, we cannot hope to obtain tree-decompositions of low radial width where the decomposition tree is a $c$-quasi-geodesic subtree (with bounded $c$).
The trees of wheels form a class of graphs that all have radial tree-width~$1$.
But by \cref{lem:NocGeodesicDecompositionTree}, for all $c,n\in \mathbb{N}$ there is a tree of wheels $G$ such that every $H$-decomposition of $G$, with a~$c$-quasi-geodesic subtree $H$ of $G$ with $h\in V_h$ for all nodes $h\in H$, must necessarily have radial width at least~$n$.

\tikzset{spokes3/.pic = {    \node [vtx] at (0,0) {};
    \draw (0,0) -- (90:1cm) (0,0) -- (210:1cm) (0,0) -- (330:1cm);
}}

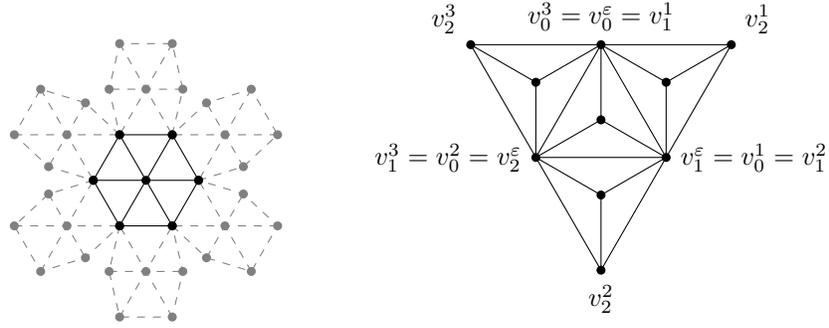
\begin{figure}
    \centering
    \begin{tikzpicture}[scale=0.7]         \tikzset{wheel6/.pic = {            \draw (0,0) node [vtx] {};
            \draw (0:.7cm) -- (180:.7cm)
            (60:1cm) -- (240:1cm)
            (120:1cm) -- (300:1cm);
            \draw (300:1cm) node [vtx] {}
            -- (0:.7cm) node [vtx] {}
            -- (60:1cm) node [vtx] {}
            -- (120:1cm) node [vtx] {}
            -- (180:.7cm) node [vtx] {}
            -- (240:1cm) node [vtx] {};
        }}
        \node [vtx] at (0,0) {};
        \foreach \a in {0,60,120,180,240,300}
            \draw (\a:1cm) ++ (\a + 60:1cm) pic [scale = 0.7, rotate = \a - 60, dashed, gray] {wheel6};
        \foreach \a in {0, 60, 120, 180, 240, 300}
           \draw (0,0) -- (\a:1cm) node [vtx] {} -- (\a + 60:1cm);
    \end{tikzpicture}
    \hskip 1cm
    \begin{tikzpicture}         \draw (0,0) pic {spokes3};
        \draw (90:1cm) node [vtx, label = {$v^3_0 = v^{\varepsilon}_0 = v^1_1$}] {}
        -- (210:1cm) node [vtx, label = {180:$v^3_1 = v^2_0 = v^{\varepsilon}_2$}] {} 
        -- (330:1cm) node [vtx, label = {0:$v^{\varepsilon}_1 = v^1_0 = v^2_1$}] {}
        -- cycle;
        \foreach \a in {30, 150, 270}
           \draw (\a:1cm) pic [rotate = 60] {spokes3};
        \draw (30:2cm) node [vtx, label = 30:$v^1_2$] {} 
        -- (150:2cm) node [vtx, label = 150:$v^3_2$] {} 
        -- (270:2cm) node [vtx, label = 270:$v^2_2$] {} 
        -- cycle;
    \end{tikzpicture}
    \caption{Two trees of wheels.
    The left graph is a tree of wheels of depth $1$ for $n=6$, and the edges that do not belong to the central wheel are dashed.
    The right graph is a tree of wheels of depth $1$ for $n=3$, and the vertices that are not hubs are labelled.}
    \label{fig:TreeOfWheels}
\end{figure}

\begin{example}[Tree of wheels]\label{ex:TreeOfWheels}
	Let $n\geq 3$ be an integer.
	Recall that a \emph{wheel $W_n$} is the graph obtained from a cycle $C^n$ and a single vertex $K^1$ by adding edges from all vertices in the cycle to $K^1$.
	We will denote the cycle, the \emph{rim} of the wheel, by $v_0v_1\cdots v_n$ where $v_0=v_n$, and the edge $v_iv_{i+1}$ by $e_{i+1}$.
	The vertex of~$K^1$ is the \emph{hub} of the wheel.
	
	In the following construction, all wheels are copies of $W_n$.
	We start with a wheel in \emph{depth} $0$, which we refer to as the \emph{central wheel}.
	In the first step, we add for every edge on the rim of the central wheel another wheel.
	More precisely, we take, for each edge~$e_i$ on the rim of the central wheel, a new wheel and identify the vertex~$v_1$ of the new wheel with $v_{i-1}$ of the central wheel, identify $v_0$ of the new wheel with $v_{i}$ of the central wheel and identify the edge $e_1$ of the new wheel with $e_i$ of the central wheel.
	We consider the new wheels to be in \emph{depth} $1$.
	
	We now repeat this step: 
	Say we have already added wheels in depth $j-1$ but not wheels in depth $j$.
	At each edge~$e_i$ with $i\neq 1$ of one of the wheels in depth $j-1$ we add another wheel.
    We consider the new wheel to be in \emph{depth} $j$.
	More precisely, let $e_i$ with $i\neq 1$ be an edge of a wheel~$W$ in depth $j-1$.
	We then add a new wheel for $e_i$, identify $v_1$ of the new wheel with $v_{i}$ of~$W$, identify $v_0$ of the new wheel with $v_{i+1}$ of~$W$ and identify $e_1$ of the new wheel with~$e_i$.
	
	The graph which we obtained in step~$d$ (that is, we added the wheels in depth $d$ but no wheels in depth~$d+1$) is the \emph{tree of wheels of depth~$d$}.
\end{example}

Two trees of wheels of depth $1$ are depicted in \cref{fig:TreeOfWheels}.
Note that for every tree of wheels, there is a tree-decomposition where every bag is the vertex set of a wheel.
So every tree of wheels has radial tree-width~$1$.

For the following lemmas we need to establish some notation regarding trees of wheels.
Let~$G$ be a tree of wheels of depth~$r$.
Every wheel in depth $j$ was added at an edge~$e_{i}$ of a wheel in depth $j-1$, and this latter wheel was again added at an edge $e_{i'}$ of a wheel in depth $j-2$ to the central wheel and so on.
So every wheel is uniquely identified by the indices of the edges to which previous wheels were added.
More precisely, we associate the central wheel with the empty sequence~$\varepsilon$.
When a wheel is added at edge~$e_i$ of a wheel that is associated with the sequence $i_1 i_2 \cdots i_j$, then the new wheel is associated with the sequence~$i_1 i_2 \cdots i_j i$.
For a sequence~$s$ that is associated with a wheel, we also denote the edge~$e_i$ of that wheel by $e^s_i$ and the vertex~$v_i$ of that wheel by~$v^s_i$.

In~\cite{KLMPW}*{Section~6} a graph is described that is very close to the tree of wheels on $3 + 1$ vertices, only without the hubs. 
There it is also proven that for a given $c$, such a graph of sufficiently large radius does not have a spanning tree that is $c$-quasi-geodesic.

\begin{lemma}\label{lem:NocGeodesicSpanningTree}
	Let\footnote{The proof can be adapted to $n=3$, but then we get a different bound on~$c$.} $n\geq 4$, $r,c\in \mathbb{N}$ and let $G$ be a tree of wheels on $n+1$ vertices of depth~$d$.
	If $G$ contains a $c$-quasi-geodesic tree that contains all vertices on rims of wheels of $G$, then $c\geq 2(d+1)$.
\end{lemma}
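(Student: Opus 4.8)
The plan is to argue by induction on the depth $d$, keeping track of how a $c$-quasi-geodesic tree $H$ containing all rim-vertices must "spiral" inwards through the nested wheels. First I would set up the basic metric facts about a tree of wheels $G$ on $n+1$ vertices: any two rim-vertices of the \emph{same} wheel are at distance at most $2$ in $G$ (via the hub), and more generally the hub of every wheel is within distance $1$ of its whole rim, so each wheel has radius $1$ in $G$. On the other hand, the "diametrically opposite" rim-vertices $v^\varepsilon_0$ and $v^\varepsilon_{\lfloor n/2\rfloor}$ of the central wheel are, when one is forced to route through the \emph{outer} wheels only (i.e. avoiding hubs and short rim-chords), at distance growing linearly in $d$; this is the key quantitative input. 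The combinatorial heart is: a tree $H\subseteq G$ spanning all rims of all wheels cannot use "too many" hub-spokes, because $H$ is acyclic and the rims themselves are cycles, so on each wheel's rim $H$ must omit at least one rim-edge, and connectivity of $H$ then forces $H$ to reach that wheel's rim-vertices through its "parent" wheel.

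The main step is to show that, because $H$ is a tree containing every rim, for each wheel $W^s$ at depth $j$ the portion of $H$ inside the subtree-of-wheels hanging off $W^s$ must contain a path between the two attachment vertices $v^s_1$ and $v^s_0$ (the vertices shared with the parent wheel) of length at least, say, $2(d-j)$ — essentially because that path is forced to detour through all the deeper wheels. I would prove this inner claim by downward induction on $d-j$: at the deepest level the rim of $W^s$ has $n\ge 4$ vertices, and $H$ restricted to that rim, being acyclic, must be a path missing at least one rim-edge, so an $v^s_1$–$v^s_0$ path in $H$ either uses a long rim-arc (length $\ge 3$ when $n\ge4$) or uses a hub-spoke detour; either way one gets a lower bound, and the inductive step adds a constant amount of forced detour per level because at each wheel $H$ must again omit a rim-edge and re-enter the deeper wheels only through their designated attachment edges $e^s_i$. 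Combining, the $v^\varepsilon_0$–$v^\varepsilon_{\lfloor n/2\rfloor}$ distance in $H$ is at least $\Omega(d)$ while in $G$ it is $O(1)$ (at most $4$, say, using two hubs), and feeding this into the $c$-quasi-geodesic inequality $\dist_H(u,v)\le c\cdot\dist_G(u,v)$ yields $c\ge 2(d+1)$ after bookkeeping the constants.

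The step I expect to be the main obstacle is making the "forced detour accumulates linearly" claim precise and airtight: one has to rule out that $H$ cleverly uses hub-spokes of many different wheels to shortcut, and the trouble is that a single hub is cheap (distance $1$), so naively a tree could hop hub-to-hub. The resolution is to observe that the hubs of distinct wheels are \emph{not} adjacent in $G$ — every hub-to-hub path must traverse at least one rim-edge, and in fact two hubs at depths $j$ and $j'$ are at $G$-distance $\ge |j-j'|+1$-ish — so "hopping through hubs" still costs a rim-edge per wheel crossed, and since $H$ omits a rim-edge on every wheel, the hub-spokes available to $H$ are genuinely limited. I would isolate this as a preliminary sublemma about the hub-distance structure of trees of wheels and then the induction goes through. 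One should also double-check the edge case $n=4$ versus larger $n$, since the bound $3$ on the shortest long rim-arc is exactly what forces the factor $2$ in $2(d+1)$; for $n=3$ the arc-length drops and the constant degrades, which is why the footnote excludes it.
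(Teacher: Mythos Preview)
Your proposal has a genuine gap in its central claim. You assert that for \emph{every} wheel $W^s$ at depth $j$, the $H$-path between its two attachment vertices $v^s_0,v^s_1$ (inside the subtree of wheels hanging off $W^s$) has length at least $2(d-j)$. This is false: for most wheels the shared edge $e^s_1=v^s_0v^s_1$ may simply lie in $H$, making that distance equal to~$1$. Nothing forces $H$ to omit precisely the attachment edge of every wheel; acyclicity only forces $H$ to omit \emph{some} rim-edge of each wheel. So neither your base case (``long rim-arc of length $\ge 3$'') nor your inductive step goes through for an arbitrary wheel.

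What the paper does instead is \emph{select} a single nested chain of wheels along which the detour accumulates. Starting at the central wheel, they split $G$ along each rim-edge $e_i$ into an ``inside'' $G^i_1$ (containing the central hub) and an ``outside'' $G^i_2$ (the sub-tree-of-wheels beyond $e_i$). Since the $G^i_2$ are edge-disjoint and $T$ is acyclic, there must exist some $i$ for which $v^i_0$ and $v^i_1$ lie in different components of $T\cap G^i_2$; for \emph{that} $i$ the $T$-path between them is forced through $G^i_1$ and has length $\ge 2$. One then recurses into the wheel at $e_i$, and the condition $n\ge 4$ guarantees the distance grows by at least $2$ at each step. After $d+1$ steps one reaches two $G$-adjacent rim-vertices whose $T$-distance is $\ge 2(d+1)$, which directly yields $c\ge 2(d+1)$. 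Note also that this final pair is \emph{adjacent} in $G$, so your choice of $v^\varepsilon_0$ and $v^\varepsilon_{\lfloor n/2\rfloor}$ (at $G$-distance~$2$) would in any case lose a factor of~$2$. The hub issue you flag as the main obstacle is a red herring once the $G^s_1/G^s_2$ split is in place: the argument never needs to reason about hub-to-hub distances at all.
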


\begin{proof}
	We need some additional notation.
	Let $s\neq \varepsilon$ be a sequence associated with a wheel.
    Then $G$ splits into two connected graphs $G^s_1$ and $G^s_2$ which only meet in $G[\{v^s_0, v^s_1 \}]$.
    We choose $G^s_1$ such that it contains the hub of the central wheel.
	
	Assume that $G$ contains a $c$-quasi-geodesic tree~$T$ that contains all vertices on rims of wheels of $G$.
	We will now recursively construct a sequence~$s$ such that the wheel associated with $s$ contains two vertices whose distance in~$T$ is large.
	In the first step let $I \subseteq \{1, \ldots, n\}$ consist of those indices $i$ such that $v_i = v^i_1$ and $v_{i+1} = v^i_0$ are contained in the same component of $T \cap G^i_2$.
	As $T$ is acyclic and the $G^i_2$ are edge-disjoint, there is $i\in \{1,\ldots,n\}$ with $i\notin I$.
	In particular, $e^i_0$ is not contained in~$T$, and $v^i_0$ and $v^i_1$ are contained in the same component of $T \cap G^i_1$, since $T$ is connected and all vertices on rims of wheels are in $T$ by assumption. 
	Thus, the unique $v^i_0$--$v^i_1$ path in~$T$ is contained in $T \cap G^i_1$ and has at least two edges.
	
	Now we repeat the following step recursively.
	Assume that $j\leq d-1$ and that a sequence $s=i_1 i_2 \ldots i_{j}$ associated with a wheel has been constructed such that $v^s_0$ and $v^s_1$ are not contained in the same component of $T \cap G^s_2$ and have distance at least $2j$ in~$T$.
	Then it directly follows that there also is some $i\neq 1$ such that $v^s_i = v^{si}_1$ and $v^s_{i+1} = v^{si}_0$ are not contained in the same component of $T\cap G^s_2$.
	As $G^{si}_2$ is a subgraph of~$G^s_2$, the vertices $v^{si}_0$ and $v^{si}_1$ are not contained in the same component of $T\cap G^{si}_2$.
	Furthermore, the unique $v^{si}_0$--$v^{si}_1$~path in~$T$ contains $v^s_0$ and $v^s_1$ and thus, since $n \geq 4$, $v^{si}_0$ and $v^{si}_1$ have distance at least~$2(j+1)$ in~$T$.
	
	So assume we obtained a sequence $s=i_1 i_2 \ldots i_r$ such that $v^s_0$ and $v^s_1$ are not contained in the same component of $T\cap G^s_2$ and have distance at least $2d$ in~$T$.
	As above, there is some integer $i\neq 1$ such that $v^s_i$ and $v^s_{i+1}$ are not contained in the same component of $T\cap G^s_2$, and thus they have distance at least~$2(d+1)$ in $T$ while they are adjacent in $G$, implying $c\geq 2(d+1)$.
\end{proof}

\begin{lemma}\label{lem:NocGeodesicDecompositionTree}
	Let\footnote{The proof can be adapted to $n\geq 3$, but then we get a different bound on~$c$.} $n\geq 6$, $r\in \mathbb{N}$ and let $G$ be a tree of wheels on $n+1$ vertices of depth~$d$.
	If there is a tree~$T$ in~$G$ that is $c$-quasi-geodesic for some $c\in \mathbb{N}$ such that for some $q \in \mathbb{N}$ all vertices of~$G$ have distance at most~$q$ from~$T$, then $c\geq (2/3) \cdot (d - \lceil q/2 \rceil + 1)$.
\end{lemma}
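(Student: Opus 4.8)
The plan is to adapt, to the present weaker hypothesis, the recursive descent used in the proof of \cref{lem:NocGeodesicSpanningTree}. There one walks down through the nesting structure of the tree of wheels, maintaining a wheel $s$ at depth $j$ together with the $2$-separator $\{v^s_0,v^s_1\}$ bounding the nested subgraph $G^s_2$, arranged so that $v^s_0$ and $v^s_1$ lie in distinct components of $T\cap G^s_2$ and satisfy $\dist_T(v^s_0,v^s_1)\ge 2j$; passing to a suitable child wheel raises $j$ and, since $T$ is connected and any $v^s_0$--$v^s_1$ path leaving $G^s_2$ must pass through \emph{both} separator vertices, raises the tracked $T$-distance by $2$. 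Iterating through all $d$ levels and then inspecting a single rim edge of the deepest wheel produces two adjacent vertices at $T$-distance at least $2(d+1)$, i.e.\ $c\ge 2(d+1)$. Under the hypothesis here $T$ need not contain the rim vertices; it only comes $q$-close to every vertex of $G$. So the invariant must be phrased in terms of $T$-vertices that sit \emph{near} the separators, and one has to pay for this slack.

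Concretely, I would set $\ell:=\lceil q/2\rceil$ and $d':=d-\ell$, assuming $d\ge\ell$ (otherwise the claimed bound is $<1\le c$ and there is nothing to prove). First I would descend $\ell$ levels of the nesting to a subgraph $G_0:=G^{s_0}_2$ for a fixed wheel $s_0$ at depth $\ell$; this $G_0$ is, up to the one branch missing at its central wheel, a tree of wheels on $n+1$ vertices of depth $d'$ with boundary the $2$-separator $\{v^{s_0}_0,v^{s_0}_1\}$. The point of this descent is that $T\cap G_0$ is non-empty and, for vertices lying deep enough inside $G_0$, is reached from within $G_0$, while $T\cap G_0$ still has at most two components because $G_0$ is glued to the rest of $G$ along a $2$-separator; this two-component structure is what substitutes for the ``split rim vertices'' of \cref{lem:NocGeodesicSpanningTree}. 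Inside $G_0$ I would then run the modified recursion, tracking at wheel $s$ a pair of $T$-vertices lying within the $q$-balls of $v^s_0$ and of $v^s_1$ and in distinct components of $T\cap G^s_2$, with $T$-distance increasing by $2$ per level for the same topological reason as before (a separating $v^s_i$--$v^s_{i+1}$ path in $T$ must cross both $v^s_0$ and $v^s_1$, hence contains the $T$-path tracked at the previous level). The existence, at each level, of a child wheel whose new separator is again split is obtained exactly as in \cref{lem:NocGeodesicSpanningTree}, the role of ``$T$ contains all rim vertices'' being played by ``$T$ $q$-covers $G$''; here $n\ge 6$ is used to guarantee enough rim edges not incident with the current separator so that the descent stays away from the boundary of $G^s_2$.

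After $d'$ levels this yields vertices $x,y\in V(T)$ with $\dist_T(x,y)\ge 2(d'+1)$ that are realised near two adjacent vertices of a deepest wheel of $G_0$; bounding $\dist_G(x,y)$ by an absolute constant (indeed $\le 3$ after the right choices, the $q$-slack on each side being absorbed thanks to the $\ell=\lceil q/2\rceil$ sacrificed layers) then gives
\[
 c\;\ge\;\frac{\dist_T(x,y)}{\dist_G(x,y)}\;\ge\;\frac{2(d'+1)}{3}\;=\;\frac23\Bigl(d-\lceil q/2\rceil+1\Bigr),
\]
as claimed. The main obstacle I expect is precisely this last bookkeeping: making rigorous how the $q$-covering lets one replace the separator vertices by genuine $T$-vertices in controlled, distinct components of $T\cap G^s_2$ without the connecting $q$-paths sneaking across the separator and collapsing the invariant, and why exactly $\lceil q/2\rceil$ outer layers and a factor $\tfrac23$ suffice to pay for it — rather than the factor $\Theta(q)$ that the naive ``thicken $T$ by its $q$-neighbourhood'' argument would cost. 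Once the modified invariant is set up correctly, the recursion itself should be a faithful copy of the one in \cref{lem:NocGeodesicSpanningTree}.
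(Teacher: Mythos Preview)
Your plan has a real gap, and it is exactly the place you flag yourself. If at the deepest wheel you only have $T$-vertices $x,y$ lying in the $q$-balls of the two adjacent rim vertices, the honest bound is $\dist_G(x,y)\le 2q+1$, not $3$; the $\lceil q/2\rceil$ layers you sacrifice at the top only guarantee that the $q$-balls stay inside the subgraph you are working in, they do not shrink the balls. So your recursion, as described, yields $c\ge 2(d-\lceil q/2\rceil+1)/(2q+1)$, a quadratic rather than linear loss in $q$, and not the claimed $\tfrac23(d-\lceil q/2\rceil+1)$. The ``factor $\tfrac23$'' in the statement is not the price of replacing a $1$ by a $3$ in the $G$-distance; it has an entirely different origin.

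The paper does something structurally different and avoids tracking nearby $T$-vertices altogether: it \emph{bootstraps} to show that $T$ actually contains the separator vertices. First, using the $q$-covering plus $n\ge 6$, one shows that for every wheel $s$ at depth at most $d-(\lceil q/2\rceil-1)$ the tree $T$ contains at least one of $v^s_0,v^s_1$ (because a vertex $c+1$ levels further down along the ``$i=4$'' direction is at $G$-distance $\ge q$ from both sides of the separator, forcing $T$ to cross it). Second, and this is the crux, one runs a chain of $c+1$ wheels along the ``$i=2$'' direction all sharing the vertex $v^s_1$: if $T$ missed $v^s_1$, then it would contain all the $v^{st'}_0$ along that chain, giving two $T$-vertices at $G$-distance $2$ but $T$-distance $>2c$, contradicting $c$-quasi-geodesicity. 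The symmetric argument with ``$i=n$'' handles $v^s_0$. Hence $T$ contains \emph{both} separator vertices of every wheel at depth at most $d-(\lceil q/2\rceil-1)-(c+1)$, and now \cref{lem:NocGeodesicSpanningTree} applies verbatim to this shallower tree of wheels, giving $c\ge 2\bigl(d-\lceil q/2\rceil-c\bigr)$. Solving for $c$ is what produces the $\tfrac23$. The $c$-dependence on the right-hand side (from the bootstrap step) is where that constant really comes from, not from a $G$-distance of $3$ at the bottom.
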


\begin{proof}
	Denote $\lceil q/2 \rceil$ by $p$.
    As otherwise the conclusion is void, we may assume that $d > p - 1$.

    For every sequence $s\neq \varepsilon$ of length at most $d - (p - 1)$ associated with a wheel, the tree $T$ contains $v^s_0$ or~$v^s_1$.
    Indeed, let $t$ be a sequence of integers $4$ of length $p - 1$.
    Since $n \geq 6$, $v^{s}_3$ and $v^{s}_4$ have distance $2$ from~$v^s_0$ and $v^s_1$ in $G$.
    Applying this fact iteratively yields that $v^{st}_3$ has distance $2p\geq q$ from $v^s_0$ and $v^s_1$ in $G$ and even larger distance from all other vertices of~$G^s_1$.
	So $T$ contains a vertex of~$G^s_2$.
    In particular, $T$ contains a vertex in $G^{s'}_2 \subseteq G^s_1$ by applying the above argument on $s'=i$ for some $i$ distinct to the first integer in the sequence of $s$. 
	As $T$ is connected, this implies that $T$ contains $v^s_0$ or~$v^s_1$.

    If it exists, let $s\neq \varepsilon$ be a sequence of length at most $d - (p -1) - (c + 1)$ associated with a wheel and let $t$ be a sequence of integers $2$ of length $c + 1$.
	Assume for a contradiction that $T$ does not contain $v^s_1$.
	Since $v^s_1 = v^{s2}_1 = \cdots = v^{st}_1$ and the length of $st$ is at most $d - (p -1)$, it follows from the above that $T$ contains both $v^s_0$ and $v^{st}_0$. 
    And the path in~$T$ from~$v^s_0$ to~$v^{st}_0$ contains all $v^{st'}_0$ where $t'$ is an initial segment of $t$.
	As the $v^{st'}_0$ are non-adjacent, the distance of $v^s_0$ and $v^{st}_0$ in~$T$ is at least twice the length of~$t$ which in turn is larger than $c$.
	This is a contradiction to the fact that $v^s_0$ and $v^{st}_0$ have distance $2$ in $G$ and that $T$ is $c$-quasi-geodesic.
	So $T$ contains $v^s_1$, and by letting $t$ be a sequence of integers $n$ of length $c+1$ the same argument yields that $T$ contains $v^s_0$.
    Hence, $T$ contains all vertices on rims of wheels of $G$ associated to sequences of length at most $d - (p -1) - (c +1)$.
	So \cref{lem:NocGeodesicSpanningTree} yields $c\geq 2(d - (p -1) - (c + 1) +1)$ and thus $c\geq (2/3) \cdot (d - \lceil q/2 \rceil + 1)$.

    So we are left with the case that there is no sequence $s\neq \varepsilon$ of length at most $d - (p - 1) - (c + 1)$ associated with a wheel, that is, $d - p - c \leq 0$.
    This is equivalent to $c \geq d - p$, so either $d - p \geq (2/3)(d - p + 1)$ and the lemma holds, or $d - p < (2/3) (d - p + 1)$.
    In the latter case $d - p < 2$, so $d - p + 1 \leq 2$.
    Hence in this remaining case either $c \geq 4/3$ and the lemma holds, or $c = 1$ and the lemma holds by \cref{lem:NocGeodesicSpanningTree}.
\end{proof}
}{}

\bibliographystyle{amsplain}
\bibliography{collective_1.bib,local_1.bib}

\end{document}